\title{Excision of Skein Categories and Factorisation Homology}
\author{Juliet Cooke}
\date{October 2019}
\newcommand{\myincludesvg}[3]{\includegraphics[#1]{svg-extract/#3_svg-raw-extract.pdf}}
\newcommand{\myincludesvggroup}[4]{\includegraphics[#1]{svg-extract/#3-#4_svg-raw-extract.pdf}}
\newlist{stages}{enumerate}{1}
\setlist[stages]{label=\Roman*.}
\newtheorem{thm}{Theorem}[section]
\newtheorem{theorem}{Theorem}
\newtheorem{lem}[thm]{Lemma}
\newtheorem{prop}[thm]{Proposition}
\newtheorem{cor}[thm]{Corollary}
\newtheorem{corollary}[theorem]{Corollary}
\theoremstyle{definition}
\newtheorem{defn}[thm]{Definition}
\newtheorem{ex}[thm]{Example}
\theoremstyle{remark}
\newtheorem{rmk}[thm]{Remark}
\newcommand{\catname}[1]{{\normalfont\textbf{#1}}}
\newcommand{\Rex}{\catname{Rex}}
\newcommand{\Cauchy}{\catname{Cauchy}}
\newcommand{\Cocont}{\catname{Cocont}}
\newcommand{\Set}{\catname{Set}}
\newcommand{\Rep}{\catname{Rep}}
\newcommand{\Repfd}{\catname{Rep}^{\mathrm{fd}}}
\newcommand{\Cat}{\catname{Cat}}
\newcommand{\Mfld}[1]{\catname{Mfld}^{\mathrm{or}, \sqcup}_{#1}}
\newcommand{\Disc}[1]{\catname{Disc}^{\mathrm{or}, \sqcup}_{#1}}
\newcommand{\Emb}[1]{\catname{Emb}^{\mathrm{or}}_{#1}}
\newcommand{\kMod}{k\catname{Mod}}
\newcommand{\Vect}{\catname{Vect}_k}
\newcommand{\bal}{\catname{Fun}_{\mathscr{A}\operatorname{--bal}}}
\newcommand{\Diag}{\catname{Diag}_{\mathscr{D}}(\mathscr{C})}
\newcommand{\DiagCat}{\catname{Diag}_{\mathscr{D}}(\Cat_k)}
\newcommand{\Sk}{\catname{Sk}}
\newcommand{\Ribbon}{\catname{Ribbon}}
\newcommand{\LFP}[1]{\catname{LFP}_{#1}}
\newcommand{\frg}{\mathfrak{g}}
\newcommand{\qgroup}[1]{\mathcal{U}_q(#1)}
\newcommand{\skalg}{\operatorname{SkAlg}}
\newcommand{\im}{\operatorname{Im}}
\newcommand{\cauchy}{\operatorname{Cauchy}}
\newcommand{\Free}{\operatorname{Free}}
\newcommand{\Hom}{\operatorname{Hom}}
\newcommand{\Comp}{\operatorname{Comp}}
\newcommand{\Bicolim}{\operatorname{Bicolim}}
\newcommand{\Id}{\operatorname{Id}}
\newcommand{\Lie}{\operatorname{Lie}}
\newcommand{\Trace}{\operatorname{Tr}}
\newcommand{\Chg}{\operatorname{Ch}}
\newcommand{\SL}{\operatorname{SL}_2}
\newcommand{\End}{\underline{\operatorname{End}}}
\newcommand{\IHom}{\underline{\operatorname{Hom}}}
\newcommand{\Mod}[2]{#1\text{--}\mathrm{mod}_{#2}}
\newcommand{\eval}{\operatorname{eval}}
\newcommand{\act}{\operatorname{act}}
\newcommand{\ftensor}[1]{\mathbin{\widetilde{#1}}}
\newcommand{\diagramh}[3]{
\ifdefined\svg{}
\renewcommand\svg@ink@area{}%
\fi
\mathop{\raisebox{-#3}{\myincludesvggroup{scale=1}{inkscape=nolatex,inkscapeformat=png,inkscapename=#1-#2,inkscapeopt=-i #2 -j}{#1}{#2}}}%
\ifdefined\svg{}
\renewcommand\svg@ink@area{-C}
\fi
}
\newcommand{\smalldiagram}[1]{\diagramh{skeinrelations}{#1}{9pt}}
\patchcmd{\thmhead}{(#3)}{#3}{}{}
\def\slashedarrowfill@#1#2#3#4#5{%
  $\m@th\thickmuskip0mu\medmuskip\thickmuskip\thinmuskip\thickmuskip
  \relax#5#1\mkern-7mu%
  \cleaders\hbox{$#5\mkern-2mu#2\mkern-2mu$}\hfill
  \mathclap{#3}\mathclap{#2}%
  \cleaders\hbox{$#5\mkern-2mu#2\mkern-2mu$}\hfill
  \mkern-7mu#4$%
}
\def\rightslashedarrowfill@{%
  \slashedarrowfill@\relbar\relbar\mapstochar\rightarrow}
\newcommand\xslashedrightarrow[2][]{%
  \ext@arrow 0055{\rightslashedarrowfill@}{#1}{#2}}
\begin{document}
\maketitle

\begin{abstract}
    We prove that the skein categories of Walker--Johnson-Freyd satisfy excision. 
    This allows us to conclude that skein categories are $k$-linear factorisation homology and taking the free cocompletion of skein categories recovers locally finitely presentable factorisation homology.
    An application of this is that the skein algebra of a punctured surface related to any quantum group with generic parameter gives a quantisation of the associated character variety. 
\end{abstract}

\tableofcontents

\section*{Introduction} 
\addcontentsline{toc}{section}{Introduction}
The Kauffman bracket skein algebra of the oriented surface \(\Sigma\) is the algebra of framed, oriented links in \(\Sigma \times [0,1]\) modulo the local `skein relations'
\[
\smalldiagram{g4547} = q^{-1} \smalldiagram{g4563} + q\smalldiagram{g4571} \text{ and } \smalldiagram{g4617} = -q^{2} -q^{-2}.
\]
By introducing coupons and colouring, one may define a skein algebra for any reductive algebraic group \(G\)~\footnote{The Kauffman bracket skein algebra is the case \(G=\SL\).}, or indeed any \(k\)-linear ribbon category \(\mathscr{V}\).

\begin{wrapfigure}{r}{0.33\textwidth}
  \vspace{-20pt}
  \begin{center}
    \myincludesvg{width=0.98\textwidth}{}{ribdiagramex}
  \end{center}
    \vspace{-20pt}
\end{wrapfigure}
A skein category extends the definition of a skein algebra further by no longer requiring the ribbon tangles to be closed. 
Skein categories were first defined by Walker \cite[70]{WalkerTQFT} and Johnson-Freyd \cite[Section~9]{JF2015}. The definition is based on the \(\mathscr{V}\)-coloured ribbon diagram category of Turaev \cites{T94Book, Tuaev97}. This \(\mathscr{V}\)-coloured ribbon diagram category is a \(k\)-linear category whose morphisms are \(\mathscr{V}\)-coloured ribbon tangles in \([0, 1]^3\); each such ribbon tangle can itself be evaluated as a morphism of \(\mathscr{V}\).
The skein category \(\Sk_{\mathscr{V}}(\Sigma)\) is then defined as the \(k\)-linear category whose 
\begin{enumerate}
    \item Objects are finite sets of framed, \(\mathscr{V}\)-coloured points in \(\Sigma\);
    \item Morphisms are \(k\)-linear combinations of \(\mathscr{V}\)-coloured ribbon tangles in \(\Sigma \times [0, 1]\) up to the equivalence that \(F \sim G\) if they are equal outside a cube and their evaluations on the cube are equal.
\end{enumerate}
For more precise definitions and some basic properties; see \cref{section:skein-categories}. The endomorphism algebra of the empty set in the skein category \(\Sk_{\mathscr{V}}(\Sigma)\) recovers the original skein algebra \(\skalg_{\mathscr{V}}(\Sigma)\) of the surface.

The main result of this paper is to prove that skein categories satisfy excision\footnote{This is \cref{thm:skeinexcision}.}:
\begin{theorem}
\label{thm:introexcision}
Let \(C\) be a \(1\)-manifold with a thickened right embedding into the boundary of the surface \(M\) and a thickened left embedding into the boundary of the surface \(N\). There is an equivalence of categories 
\[F: \Sk(M) \underset{\Sk(C \times [0, 1])}{\otimes} \Sk(N) \xrightarrow{\sim} \Sk\left(M \underset{C \times [0, 1]}{\sqcup} N \right) .\]
\end{theorem}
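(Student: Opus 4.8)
The plan is to argue geometrically, by analysing $\mathscr{V}$-coloured ribbon tangles in the glued surface according to how they meet the seam along which $M$ and $N$ are joined, and then matching this analysis with the coend of $k$-modules that computes morphisms in the relative tensor product. The functor $F$ itself is the evident one: the two thickened embeddings identify a neighbourhood of the seam in $M\sqcup_{C\times[0,1]}N$ with a region $(C\times[0,1])\times[0,1]$ split between the collar copies sitting inside $M\times[0,1]$ and inside $N\times[0,1]$, so a ribbon tangle in $M\times[0,1]$ and one in $N\times[0,1]$ whose boundary data agree there can be glued to a ribbon tangle in $(M\sqcup_{C\times[0,1]}N)\times[0,1]$, and on objects one glues the underlying framed coloured point sets. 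The first step is to check that this gluing operation descends through the equivalence relations defining $\Sk(M)$ and $\Sk(N)$ to a $k$-linear functor $\Sk(M)\times\Sk(N)\to\Sk\!\left(M\sqcup_{C\times[0,1]}N\right)$, and that it is $\Sk(C\times[0,1])$-balanced --- inserting a cylinder tangle into the right collar of an $M$-tangle before gluing agrees with inserting it into the left collar of an $N$-tangle before gluing --- so that by the universal property of the relative tensor product it induces $F$.

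Essential surjectivity is straightforward: an object of $\Sk\!\left(M\sqcup_{C\times[0,1]}N\right)$ is a finite set of framed coloured points, and a small isotopy pushes all of them off the seam into the interiors of the $M$- and $N$-parts, exhibiting the object as $F(X,Y)$ for some $X\in\Sk(M)$, $Y\in\Sk(N)$.

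The substance of the proof is full faithfulness. The morphism space of the relative tensor product is the coend
\[
\Hom\big((X_1,Y_1),(X_2,Y_2)\big)\;=\;\int^{A\in\Sk(C\times[0,1])}\Hom_{\Sk(M)}(X_1,X_2\cdot A)\otimes_k\Hom_{\Sk(N)}(A\cdot Y_1,Y_2),
\]
and I must show the canonical comparison map from it into $\Hom\big(F(X_1,Y_1),F(X_2,Y_2)\big)$ is an isomorphism. For surjectivity, isotope a ribbon tangle representing a given morphism of the glued surface so that it is transverse to the two-dimensional seam wall $(C\times\{\tfrac12\})\times[0,1]$, meeting it in finitely many framed coloured points and with all coupons off the wall; those intersection points form an object $A$ of $\Sk(C\times[0,1])$, and cutting along the wall and combing the new endpoints into the collars produces tangles representing classes in $\Hom_{\Sk(M)}(X_1,X_2\cdot A)$ and $\Hom_{\Sk(N)}(A\cdot Y_1,Y_2)$ whose image under the coend map is the chosen morphism. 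For injectivity, I would show that any two such cut representatives of a single morphism of the glued surface are connected by a finite sequence of moves each of which alters the cut data $\big(A,\,\text{$M$-part},\,\text{$N$-part}\big)$ only by inserting a morphism of $\Sk(C\times[0,1])$ at the cut, and hence becomes an equality once one passes to the coend. A skein or cube relation in $M\sqcup_{C\times[0,1]}N$ is supported in a small ball, which can be isotoped off the seam wall, so it is already imposed inside $\Sk(M)$ or $\Sk(N)$; and an isotopy of tangles can be placed in general position relative to the seam wall, so that transversality fails only at finitely many times, at each of which a portion of the tangle crosses the wall tangentially or the endpoints on the wall are recombed --- events that each amount precisely to inserting a cup, a cap, a crossing, or an isotopy of $C\times[0,1]$, i.e.\ a morphism of $\Sk(C\times[0,1])$, at the cut.

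The main obstacle is this last general-position argument: proving that an arbitrary isotopy in $M\sqcup_{C\times[0,1]}N$ is, after subdivision, a finite concatenation of the elementary seam-crossing moves above, and that no relation among cut representatives occurs that is not already accounted for by a morphism of $\Sk(C\times[0,1])$ inserted at the cut. This is a one-parameter, Morse-type transversality argument for families of ribbon tangles relative to a codimension-one wall, in the spirit of excision proofs in factorisation homology; what makes it tractable is that morphisms in a skein category are already defined cube-by-cube, so arbitrary tangles and relations reduce to finite local data, but care is needed to keep track of the framings and colours of the strands crossing the seam, and to treat circle components of $C$ --- for which the cylinder category $\Sk(C\times[0,1])$ has a more intricate structure --- with due attention.
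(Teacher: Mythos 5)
Your topological strategy is close in spirit to the paper's --- both reduce the problem to analysing how ribbon tangles in $M\sqcup_{C\times[0,1]}N$ meet a seam wall and then apply a general-position / transversality argument --- but the two proofs organise the cut differently, and the difference matters. The paper cuts \emph{horizontally}, in the time direction, at the finitely many levels $t_i$ where the tangle straddles the wall; this yields a factorisation of $u$ directly into the generators of Tambara's presentation (non-crossing pieces $(f,g)$ together with crossings corresponding to $\iota_{m,a,n}$, $\iota^{-1}_{m,a,n}$), and $F$ is defined by hand on those generators and checked against each of Tambara's relations. You instead cut \emph{vertically}, along the wall times $[0,1]$, comb the intersection points into the collars, and feed the resulting pair into a coend formula for $\Hom$ in the relative tensor product. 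That framing is appealing, but it carries two costs the paper avoids. First, the asserted coend formula for $\Hom$ in the Tambara tensor product is itself a non-trivial statement: Tambara's $\mathscr{M}\otimes_{\mathscr{A}}\mathscr{N}$ is presented by generators and relations, not a coend, and the reduction to your coend requires rigidity of $\mathscr{A}=\Sk(C\times[0,1])$ together with a careful argument tracking which side acquires a dual and how the pentagon and triangle relations collapse; the variance and the placement of $A$ (your $X_2\cdot A$ and $A\cdot Y_1$) also need to be checked against the chosen conventions for the thickened embeddings. The paper sidesteps this entirely by never reformulating the Hom spaces.

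Second, and more substantively, you correctly identify the faithfulness step as the ``main obstacle'' and then leave it as a sketch. That step is precisely where the work of the proof lives. The paper handles it by showing that any equivalence of two ribbon diagrams in $M\sqcup_{C\times[0,1]}N$ decomposes into an explicit finite list of elementary moves --- equivalence of a non-crossing piece, equivalence inside the middle region, commuting a crossing past a disjoint morphism, and merging two adjacent crossings --- and then verifying, for each move, that the two induced lifts to the Tambara tensor product agree (using the pentagon, naturality of $\rho$ and $\iota$, etc.). Your proposal gestures at the analogous statement (``every seam-crossing event amounts to inserting a morphism of $\Sk(C\times[0,1])$ at the cut, and the coend relation absorbs it''), but this is exactly the claim that needs to be proved rather than asserted; until the one-parameter transversality analysis is carried out and matched move-by-move against the coend relations (and the combing of wall points is shown to be well-defined up to precisely those relations, including when $C$ has circle components), the argument is incomplete.
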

\noindent 
The excision of skein categories was conjectured by Johnson-Freyd \cite[Section~9]{JF2015} again based on the ideas of Walker \cite{WalkerTQFT, MW11}. There is also a result of Yetter~\cite{Yetter} which proves a similar excision result for universal braid categories in \(\Set\) and the topological parts of the proof of \cref{thm:introexcision} are based on this proof. 

Our motivation for studying skein categories came from trying to understand the relation of skein algebras to  factorisation homology. Factorisation homology takes two inputs, a topological input and an algebraic input, and produces an object which is a topological invariant of the topological input and an algebraic invariant of the algebraic input: 
\begin{itemize}
    \item The topological input is a \(n\)-dimensional manifold which may have a \(G\)-structure such as an orientation. 
    \item The algebraic input is a \(\catname{Disc}^G_n\)-algebra. This is a symmetric monoidal functor 
    \[F_A: \catname{Disc}^{G, \sqcup}_n \to \mathscr{C}^{\otimes}: \mathbb{D}^n \mapsto A\]
    from the \((\infty, 1)\)-category of \(G\)-framed discs to a symmetric monoidal \((\infty, 1)\)-category \(\mathscr{C}^{\otimes}\). 
\end{itemize}
The factorisation homology \(\int_{M}^{\mathscr{C}^{\otimes}} A\) of the manifold \(M\) with coefficients in \(A\) is obtained by extending the \(\catname{Disc}^G_n\)-algebra functor \(F_A\) to manifolds:
\[
\begin{tikzcd}
\catname{Disc}^{G, \sqcup}_n
        \ar[r, "F_A"]
        \ar[d, hook]
    & \mathscr{C}^{\otimes} \\
\catname{Mfld}^{G, \sqcup}_n
        \ar[ru, dashrightarrow, "\int^{\mathscr{C}^{\otimes}}_{\_} A"']
\end{tikzcd}
\]

Factorisation homology satisfies a generalisation of the Eilenberg--Steenrod axioms for singular homology \cite{AyalaFrancis} so may be interpreted as a generalisation of homology which is tailor-made for topological manifolds rather than general topological spaces.
For certain geometric and algebraic inputs, factorisation homology can recover other homology theories such a singular or Hochschild homology; see \cite[2]{AFPrimer} for elaboration and further examples.
In particular, factorisation homologies satisfy excision, and are characterised by this excision and a few other properties.

In \cref{section:fact-homologies}, we use the characterisation of factorisation homology in terms of excision to prove\footnote{This is \cref{thm:skeinklinear}.}
\begin{theorem}
\label{thm:introklinear}
Let \(\mathscr{V}\) be a \(k\)-linear ribbon category and \(\Cat_k^{\times}\) be the \((2,1)\)-category of small \(k\)-linear categories. The \(k\)-linear factorisation homology 
\[
\int^{\Cat_k^{\otimes}}_{\_} \mathscr{V}: \Mfld{n} \to \Cat_k^{\times}
\] 
is \(\Sk_{\mathscr{V}}(\_)\). In particular this means that for any oriented surface \(\Sigma\), there is an equivalence of categories 
\[
\Sk_{\mathscr{V}}(\Sigma) \simeq \int_{\Sigma}^{\Cat_k^{\times}} \mathscr{V}.
\]
\end{theorem}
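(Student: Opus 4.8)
The plan is to deduce \cref{thm:introklinear} from \cref{thm:introexcision} via the standard characterisation of factorisation homology by $\otimes$-excision. Recall from \cite{AyalaFrancis} that a symmetric monoidal functor $\mathcal{F}\colon \Mfld{n} \to \mathscr{S}^{\times}$ that is a homology theory (i.e.\ satisfies $\otimes$-excision and is a sifted-colimit-preserving extension of its restriction to discs) is canonically equivalent to $\int_{\_} \mathcal{F}(\mathbb{D}^n)$. So the strategy has three ingredients: (i) identify the $\catname{Disc}^{\mathrm{or}}_n$-algebra that $\Sk_{\mathscr{V}}(\_)$ restricts to on discs; (ii) check that $\Sk_{\mathscr{V}}(\_)$ is a symmetric monoidal functor $\Mfld{n}\to\Cat_k^{\times}$ on the nose (or rather, that it extends to such a functor of $(\infty,1)$-categories); and (iii) invoke \cref{thm:introexcision} to get $\otimes$-excision, together with the colimit condition, to conclude $\Sk_{\mathscr{V}}(\_) \simeq \int_{\_}^{\Cat_k^{\times}} \mathscr{V}$.

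First I would treat (i): on a single disc $\mathbb{D}^2$, a $\mathscr{V}$-coloured ribbon tangle category is essentially the category of $\mathscr{V}$-coloured points and ribbon tangles between them in $\mathbb{D}^2\times[0,1]\cong [0,1]^3$, whose evaluation gives a ribbon functor to $\mathscr{V}$. The skein relations quotient makes $\Sk_{\mathscr{V}}(\mathbb{D}^2)$ into the Cauchy-type completion / idempotent-closure of the free $k$-linear ribbon category on $\mathscr{V}$, which is equivalent to $\mathscr{V}$ itself (up to the appropriate completion — this should be recorded as a lemma earlier in the paper, using the universal property of the ribbon category of tangles). More precisely one must check that the symmetric monoidal functor $\catname{Disc}^{\mathrm{or}, \sqcup}_2 \to \Cat_k^{\times}$ sending $\mathbb{D}^2 \mapsto \Sk_{\mathscr{V}}(\mathbb{D}^2)$ is the $\catname{Disc}^{\mathrm{or}}_2$-algebra classified by $\mathscr{V}$, i.e.\ that the $E_2$-structure induced by stacking discs matches the braided structure of $\mathscr{V}$, and the framing anomaly matches the ribbon twist. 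This is a compatibility check between the topology of embeddings of discs and the algebraic structure maps of $\mathscr{V}$; it is somewhat delicate but conceptually routine given Turaev's description of the universal ribbon category.

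Next, for (ii), I would verify functoriality and symmetric monoidality: an orientation-preserving embedding $M\hookrightarrow N$ induces a $k$-linear functor $\Sk_{\mathscr{V}}(M)\to\Sk_{\mathscr{V}}(N)$ by pushing forward coloured points and tangles (well-defined on skein classes since the local relations are local), and disjoint union goes to product of $k$-linear categories since a tangle in $(M\sqcup N)\times[0,1]$ splits. The subtlety is the $(\infty,1)$-categorical/$(2,1)$-categorical coherence: isotopic embeddings must induce equivalent functors, and composites of embeddings must induce composites up to coherent natural isomorphism. I would handle this by showing $\Sk_{\mathscr{V}}$ factors through $\catname{Mfld}^{\mathrm{or}}_2$ as an actual functor to $\Cat_k$ and then, since $\Cat_k^{\times}$ as a $(2,1)$-category only remembers isomorphisms, promote it to the $(\infty,1)$-level; alternatively cite the treatment of isotopy-invariance of skein categories from \cref{section:skein-categories}.

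Finally, (iii): \cref{thm:introexcision} is precisely the statement that $\Sk_{\mathscr{V}}(\_)$ satisfies $\otimes$-excision over $\Cat_k^{\times}$ (the relative tensor product $\underset{\Sk(C\times[0,1])}{\otimes}$ being the bar/$2$-sided colimit in $\Cat_k$, which exists by local presentability of $\Cat_k$ as an $\infty$-category, or rather by the fact that $\Cat_k$ has geometric realisations). Together with the observation that $\Sk_{\mathscr{V}}(\_)$ preserves filtered/sifted colimits along open embeddings of surfaces (every surface being an increasing union of handle attachments built from discs and collars, and a ribbon tangle living in a compact piece — so $\Sk_{\mathscr{V}}(\bigcup M_i) \simeq \Colim \Sk_{\mathscr{V}}(M_i)$), the characterisation theorem of Ayala--Francis applies verbatim and yields the natural equivalence $\Sk_{\mathscr{V}}(\_) \simeq \int_{\_}^{\Cat_k^{\times}}\mathscr{V}$, with the disc case of (i) identifying the coefficient algebra as $\mathscr{V}$. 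The last sentence of the theorem is then the restriction to $M=\Sigma$. I expect the main obstacle to be step (i)—pinning down that the $E_2$-algebra underlying $\Sk_{\mathscr{V}}$ on discs is exactly $\mathscr{V}$ with its ribbon structure, rather than some Morita-equivalent or completed variant—and the bookkeeping in (ii) needed to land genuinely in an $(\infty,1)$-category of $k$-linear categories so that the cited characterisation theorem can be quoted without modification.
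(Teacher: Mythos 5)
Your overall strategy---verify the disc case, establish $2$-functoriality, then appeal to the characterisation of factorisation homology by excision---is exactly the route the paper takes in \cref{thm:skeinklinear}. However, your step~(iii) contains a genuine gap. You assert that \cref{thm:introexcision} is ``precisely'' $\otimes$-excision over $\Cat_k^{\times}$, identifying the relative tensor product $\otimes_{\Sk(C\times[0,1])}$ with the bar colimit. But \cref{thm:skeinexcision} is proved with Tambara's relative tensor product of $k$-linear categories (\cref{defn:reltensorproduct}), given by explicit generators and relations, whereas the excision axiom for factorisation homology (\cref{thm:excision}) is stated with the relative tensor product defined as the colimit of the $2$-sided bar construction (\cref{defn:barrelativetensorproduct}). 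These have different universal properties \emph{a priori}, and their identification is a substantial intermediate result: the paper devotes all of \cref{section:relative-tensor-products} and \cref{thm:relaresame} to unwinding the bicolimit of the truncated bar construction (pseudonatural transformations into a constant diagram, modifications between them, the coherence $2$-cells $\kappa_i$) and matching the result against $\mathscr{A}$-balanced functors. Without this, the hypothesis of the characterisation theorem is not verified, and your step~(iii) does not go through as stated.

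Two smaller points. Your worry in step~(i) about a ``Cauchy-type completion'' is unfounded: by \cref{prop:Ribbonfunctor}, Turaev's evaluation functor $\eval\colon\Ribbon_{\mathscr{V}}([0,1]^2)\to\mathscr{V}$ is full and surjective on objects, and the skein relations kill exactly its kernel on hom-spaces, so $\Sk_{\mathscr{V}}(\mathbb{D}^2)\simeq\mathscr{V}$ on the nose (\cref{cor:disc}), with no idempotent-closure or completion intervening. And the $(\infty,1)$-categorical bookkeeping you anticipate in step~(ii) is largely avoided, because the paper invokes the surface-level characterisation \cite[Theorem~2.5]{david1} recorded as \cref{thm:factcharacterised}, which operates at the $(2,1)$-categorical level and lists only three conditions (disc, cylinder $E_1$-structure, excision); in particular there is no separate sifted-colimit-preservation condition to check---\cref{rmk:embedding} and \cref{rmk:naturalityofiso} already give $\Sk_{\mathscr{V}}(\_)$ as a $2$-functor into $\Cat_k^{\times}$, which is all the characterisation requires.
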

After proving \cref{thm:introexcision} in \cref{section:skein-categories}, the remaining difficulty in proving this theorem is due to differing definitions of the relative tensor product. In \cref{section:skein-categories}, we prove that skein categories satisfy excision where the relative tensor product is Tambara's relative tensor product of \(k\)-linear categories (see \cref{defn:reltensorproduct}). In the statement of the excision of factorisation homology a different definition of relative tensor product is used: the relative tensor product is the colimit in \(\mathscr{C}^{\otimes}\) of the \(2\)-sided bar construction. In \cref{section:relative-tensor-products}, we prove that when \(\mathscr{C}^{\otimes} = \Cat_k^{\times}\), the \((2, 1)\)-category of \(k\)-linear categories, these definitions are equivalent so, in particular, skein categories satisfy the same notion of excision as factorisation homologies. 

Finally in \cref{section:character-varieties}, we apply our results to the quantisation of character varieties. Ben-Zvi, Brochier and Jordan \cite{david1} showed that one can quantise character varieties of punctured surfaces using factorisation homology. They showed that the locally finitely presentable factorisation homology \(\int^{\LFP{\mathbb{C}}}_{\Sigma_0} \Rep_q(G)\), of the punctured surface \(\Sigma_0\) with coefficients in the category \(\Rep_q(G)\) of integrable representations of the quantum group \(\qgroup{\frg}\), is equivalent to the category \( \Mod{A_{\Sigma_0}}{\Rep_q(G)}\) for a combinatorially determined algebra object \(A_{\Sigma_0}\). Furthermore, they proved that the algebra of \(\qgroup{\frg}\)-invariants \(A_{\Sigma_0}^{\qgroup{\frg}}\) is a quantisation of the character variety \(\Chg_G(\Sigma_0)\). We use \cref{thm:introklinear} to prove\footnote{This is \cref{thm:freecompskeincat}.}
\begin{theorem}
Let \(\mathscr{A}\) be an abelian, \(k\)-linear, ribbon category; \(\Free\) be the free cocompletion; \(\cauchy\) be the Cauchy completion and \(\Comp\) be the subcategory of compact projective objects.
There are equivalences of categories
\begin{align*}
\Free (\Sk_{\mathscr{A}}(\Sigma)) &\simeq \int^{\LFP{k}}_{\Sigma} \Free(\mathscr{A}) \\
\text{ and } \cauchy(\Sk_{\mathscr{A}}(\Sigma)) &\simeq \Comp\left(\int^{\LFP{k}}_{\Sigma} \Free(\mathscr{A}) \right).
\end{align*}
\end{theorem}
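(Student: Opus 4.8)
The plan is to derive this theorem as a formal consequence of \cref{thm:introklinear} together with the properties of factorisation homology with coefficients in larger target $(\infty,1)$-categories, namely $\Cocomp$ (or $\Rex$/$\Cocont$) and $\LFP{k}$. The key structural input is that the free cocompletion functor $\Free \colon \Cat_k \to \LFP{k}$ (landing small $k$-linear categories into locally finitely presentable ones, with morphisms the cocontinuous functors) is symmetric monoidal for the Deligne/Kelly tensor product, so it sends the $\Disc{n}$-algebra $\mathscr{A}$ to the $\Disc{n}$-algebra $\Free(\mathscr{A})$; because factorisation homology is functorial in the target and commutes with symmetric monoidal left adjoints, we get $\int^{\LFP{k}}_{\Sigma} \Free(\mathscr{A}) \simeq \Free\bigl(\int^{\Cat_k^{\times}}_{\Sigma} \mathscr{A}\bigr)$. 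Combined with \cref{thm:introklinear}, which identifies $\int^{\Cat_k^{\times}}_{\Sigma}\mathscr{A}$ with $\Sk_{\mathscr{A}}(\Sigma)$, the first equivalence $\Free(\Sk_{\mathscr{A}}(\Sigma)) \simeq \int^{\LFP{k}}_{\Sigma}\Free(\mathscr{A})$ follows.

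First I would recall or establish the relevant form of ``factorisation homology commutes with symmetric monoidal left adjoints''. The cleanest route is via excision: both $\Free\bigl(\Sk_{\mathscr{A}}(\_)\bigr)$ and $\int^{\LFP{k}}_{\_}\Free(\mathscr{A})$ are symmetric monoidal functors out of $\Disc{n}$ agreeing on discs (since $\Free(\Sk_{\mathscr{A}}(\mathbb{D}^n)) = \Free(\mathscr{A})$), so by the uniqueness/characterisation of factorisation homology it suffices to check that $\Free(\Sk_{\mathscr{A}}(\_))$ satisfies excision in $\LFP{k}$. Here one uses that $\Free$ takes Tambara's relative tensor product of $k$-linear categories to the relative tensor product (relative Deligne product) of locally presentable categories — this is exactly the kind of statement handled in \cref{section:relative-tensor-products} when comparing the bar-construction definition with Tambara's, together with the fact that $\Free$ preserves the geometric realisation computing the bar construction. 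Then $\cref{thm:skeinexcision}$ pushed forward along $\Free$ gives excision for $\Free(\Sk_{\mathscr{A}}(\_))$, and the characterisation theorem identifies it with $\int^{\LFP{k}}_{\_}\Free(\mathscr{A})$.

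For the second equivalence, I would apply the Cauchy completion $\cauchy$ (idempotent completion of the additive hull) and use that, for an abelian $k$-linear category $\mathscr{A}$, the compact projective objects of $\Free(\mathscr{A})$ are precisely the retracts of finite direct sums of representables, i.e. $\Comp(\Free(\mathscr{A})) \simeq \cauchy(\mathscr{A})$, and more generally $\Comp(\Free(\mathscr{C})) \simeq \cauchy(\mathscr{C})$ for any small $k$-linear $\mathscr{C}$ — this is the statement that a locally finitely presentable $k$-linear category is recovered as $\Ind$-completion (free cocompletion) of its subcategory of compact projectives, so $\Comp$ and $\Free$ are inverse up to Cauchy completion. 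Applying $\Comp$ to the first equivalence and commuting $\Comp$ with the identification $\Free(\Sk_{\mathscr{A}}(\Sigma)) = \int^{\LFP{k}}_{\Sigma}\Free(\mathscr{A})$ then yields $\cauchy(\Sk_{\mathscr{A}}(\Sigma)) \simeq \Comp\bigl(\int^{\LFP{k}}_{\Sigma}\Free(\mathscr{A})\bigr)$. One small point to nail down is that $\Sk_{\mathscr{A}}(\Sigma)$ need not itself be idempotent complete, so the Cauchy completion is genuinely needed on the left-hand side; this matches $\Comp$ of a locally presentable category being Cauchy-complete by construction.

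The main obstacle I expect is not the abstract nonsense of transporting along $\Free$, but rather making precise and verifying the compatibility of $\Free$ with the two relative tensor products — i.e. that $\Free$ carries Tambara's relative tensor product $\Sk(M) \otimes_{\Sk(C\times I)} \Sk(N)$ to the relative tensor product of the cocomplete categories $\Free(\Sk(M))$, $\Free(\Sk(N))$ over $\Free(\Sk(C\times I))$ computed as the bar-construction colimit in $\LFP{k}$. This requires knowing that $\Free$ is not merely symmetric monoidal but sends the specific (co)equaliser/coend presentation of Tambara's product to the corresponding colimit in $\LFP{k}$, which in turn uses that $\Free$ is a left adjoint and that the relevant diagrams are small; the results of \cref{section:relative-tensor-products} should supply exactly what is needed here, but care is required to apply them with $\Free(\mathscr{A})$-module categories rather than plain $\mathscr{A}$-module categories.
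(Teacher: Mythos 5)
Your high-level strategy matches the paper's: identify $\Sk_{\mathscr{A}}(\Sigma)$ with $\int^{\Cat_k^\times}_\Sigma \mathscr{A}$ via \cref{thm:skeinklinear}, push this equivalence forward along $\Free$, and then pass to compact projectives using $\Comp(\Free(\mathscr{C})) \simeq \cauchy(\mathscr{C})$ (\cref{prop:compoffree}). The second equivalence in your proposal is essentially verbatim the paper's.

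Where you differ is in how the key commutation $\Free\bigl(\int^{\Cat_k^\times}_\Sigma \mathscr{A}\bigr) \simeq \int^{\LFP{k}}_\Sigma \Free(\mathscr{A})$ is established. You state the abstract fact (factorisation homology commutes with symmetric monoidal left adjoints) but then propose what you call the ``cleanest route'': re-running the excision/characterisation argument in $\LFP{k}$, which in turn requires checking that $\Free$ carries Tambara's relative tensor product to the relative Deligne tensor product. The paper instead treats this as a one-liner: since factorisation homology is \emph{defined} as a left Kan extension along $\Disc{n} \hookrightarrow \Mfld{n}$, and free cocompletion preserves (left) Kan extensions (Kelly, Section~4.1), one directly obtains $\Free\bigl(\int^{\Cat_k^\times}_\Sigma \mathscr{A}\bigr) \simeq \int^{\LFP{k}}_\Sigma \Free(\mathscr{A})$. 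Your excision route would work, but it re-derives the harder direction from scratch when a clean universal-property argument is available; it also requires an additional preliminary step you do not make explicit, namely verifying that $M \mapsto \Free(\Sk_{\mathscr{A}}(M))$ is itself a symmetric monoidal $2$-functor $\Mfld{2} \to \LFP{k}^{\boxtimes}$ (which uses $\Free(\mathscr{C}\times\mathscr{D}) \simeq \Free(\mathscr{C}) \boxtimes \Free(\mathscr{D})$) before the characterisation theorem can even be invoked. Your concern in the final paragraph about $\Free$ and the two relative tensor products is well-taken but is rendered moot by the Kan-extension argument; the compatibility you worry about is a consequence, not a prerequisite, once one knows $\Free$ preserves Kan extensions.
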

This result was conjectured by Ben-Zvi, Brochier and Jordan \cite[Remark~1.3]{david1}. A corollary is\footnote{This is \cref{thm:skalgalginv} and \cref{cor:skeincharacter}.} 
\begin{corollary}
Let \(G\) be a connected Lie group such that its Lie algebra \(\frg = \Lie(G)\) is semisimple, \(q \in \mathbb{C}^{\times}\) be not a root of unity, and let \(\Sigma_0\) be a punctured surface. Then there is an algebra isomorphism 
\[\skalg_{\Repfd_q(G)}(\Sigma) \cong A_{\Sigma_0}^{\qgroup{\frg}}.\]
Hence, \(\skalg_{\Repfd_q(G)}(\Sigma)\) is a quantisation of the character variety \(\Chg_G(\Sigma)\). 
\end{corollary}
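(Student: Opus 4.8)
Write $\Sigma_0$ for the punctured surface of the statement, an oriented surface with non-empty boundary. The plan is to run the skein algebra through \cref{thm:freecompskeincat} and then through the module-category description of factorisation homology of Ben-Zvi--Brochier--Jordan \cite{david1}, after which the algebra we want is visibly $\Hom_{\Rep_q(G)}(\mathbf{1},A_{\Sigma_0}) = A_{\Sigma_0}^{\qgroup{\frg}}$.

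The first step is a formal reduction. By the defining property of the skein algebra, $\skalg_{\Repfd_q(G)}(\Sigma_0) = \operatorname{End}_{\Sk_{\Repfd_q(G)}(\Sigma_0)}(\emptyset)$, and under this identification the skein-algebra product (stack two closed skeins in $\Sigma_0\times[0,1]$ and rescale) is composition of endomorphisms of $\emptyset$. Since $\emptyset$ is an object of the skein category itself and the embedding of a category into its free cocompletion is fully faithful, this yields an isomorphism of algebras
\[
\skalg_{\Repfd_q(G)}(\Sigma_0)\;\cong\;\operatorname{End}_{\Free(\Sk_{\Repfd_q(G)}(\Sigma_0))}(\emptyset).
\]

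The second step identifies the right-hand category. Because $q$ is not a root of unity, $\Repfd_q(G)$ is an abelian, $\mathbb{C}$-linear, ribbon category, and — $\frg$ being semisimple and $q$ generic — every integrable $\qgroup{\frg}$-module of the relevant weight lattice is the filtered union of its finite-dimensional submodules, so its free cocompletion is $\Rep_q(G)$. Hence \cref{thm:freecompskeincat} gives $\Free(\Sk_{\Repfd_q(G)}(\Sigma_0)) \simeq \int^{\LFP{\mathbb{C}}}_{\Sigma_0}\Rep_q(G)$, which is exactly the factorisation homology computed in \cite{david1} and so is equivalent to $\Mod{A_{\Sigma_0}}{\Rep_q(G)}$. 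Under that equivalence the distinguished object — the image of $\emptyset$, equivalently the image of $\mathbf{1}\in\Rep_q(G)$ under the map induced by a disc embedded near the marked boundary interval — corresponds to the free rank-one module $A_{\Sigma_0}$ (the free $A_{\Sigma_0}$-module on $\mathbf 1$), because the equivalence is realised by internal $\Hom$ out of this object. Transporting the endomorphism algebra across these equivalences therefore identifies it with $\operatorname{End}_{\Mod{A_{\Sigma_0}}{\Rep_q(G)}}(A_{\Sigma_0})$, and the free--forgetful adjunction for $A_{\Sigma_0}$-modules then gives
\[
\operatorname{End}_{\Mod{A_{\Sigma_0}}{\Rep_q(G)}}(A_{\Sigma_0})\;\cong\;\Hom_{\Rep_q(G)}(\mathbf{1}, A_{\Sigma_0})\;=\;A_{\Sigma_0}^{\qgroup{\frg}},
\]
where one checks that composition on the left corresponds, under the adjunction, to the restriction of the multiplication of $A_{\Sigma_0}$ to its invariants — a routine chase with the unit and counit. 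Combined with the first step, this is the algebra isomorphism $\skalg_{\Repfd_q(G)}(\Sigma) \cong A_{\Sigma_0}^{\qgroup{\frg}}$.

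Finally, \cite{david1} has shown that $A_{\Sigma_0}^{\qgroup{\frg}}$, with its natural $\mathbb{C}[q^{\pm 1}]$-form, quantises the ring of functions on $\Chg_G(\Sigma_0)$ for its Poisson structure; since the skein algebra has an evident $\mathbb{C}[q^{\pm 1}]$-form and every equivalence above can be taken $q$-linearly, the isomorphism upgrades to one over $\mathbb{C}[q^{\pm 1}]$, so the quantisation property transports to $\skalg_{\Repfd_q(G)}(\Sigma)$. The step I expect to be the main obstacle is the bookkeeping in the second paragraph above: one must compare the skein-theoretic equivalence of \cref{thm:freecompskeincat} with the Barr--Beck / internal-$\Hom$ equivalence of \cite{david1} closely enough to be sure that the empty configuration is sent to the free rank-one $A_{\Sigma_0}$-module, and reconcile the several flavours of cocompletion in play ($\LFP{\mathbb{C}}$, free cocompletion versus ind-completion); the $q$-form compatibility used at the very end is routine but must be made explicit.
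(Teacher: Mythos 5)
Your proof is correct and takes essentially the same route as the paper: both start from $\skalg(\Sigma)=\operatorname{End}_{\Sk}(\emptyset)$, pass to $\operatorname{End}_{Z_q(\Sigma)}(\mathscr{O})$ using \cref{thm:freecompskeincat}, and then identify that endomorphism algebra with $A_{\Sigma}^{\qgroup{\frg}}$ by an adjunction argument. The one packaging difference is worth noting: the paper works on the invariants side, uses the chain $\Hom_{\Rep_q(G)}(\mathbf 1,\act^R_{\mathscr{O}}(\mathscr{O}))\cong\Hom_{Z_q(\Sigma)}(\act_{\mathscr{O}}(\mathbf 1),\mathscr{O})$, and then invokes the \emph{second} equivalence of \cref{thm:freecompskeincat} (hence needing $\mathscr{O}$ to be compact projective when $q$ is generic in order to pass through $\Comp(Z_q(\Sigma))\simeq\cauchy(\Sk(\Sigma))$); you work from the skein side, use the \emph{first} equivalence of \cref{thm:freecompskeincat}, and then pass through the Barr--Beck equivalence $Z_q(\Sigma)\simeq\Mod{A_{\Sigma}}{\Rep_q(G)}$ and the free--forgetful adjunction — the Barr--Beck conjugate of the paper's $\act\dashv\act^R$. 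Your route dispenses with the compact-projectivity step (still implicitly using genericity, since $\Free(\Repfd_q(G))\simeq\Rep_q(G)$ relies on semisimplicity), and your final worry about $\mathbb{C}[q^{\pm1}]$-forms is unnecessary — the paper just transports the quantisation structure along the complex algebra isomorphism, citing \cref{thm:invalgquantisation}.
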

This corollary was proved by direct computation by the author for the cases when \(G = \SL\) and \(\Sigma_{0}\) is the four-punctured sphere or punctured torus in \cite{Cooke18}. 
It was already well known that Kauffman bracket skein algebras are quantisations \(\SL\)-character varieties \cite{Turaev91}. That result was generalised to \(\operatorname{SL}_n\) in \cite{Sikora05}. So this corollary is a generalisation of these results to more general skein algebras.

\subsection*{Acknowledgements}
The author would like to thank David Jordan for his guidance in writing this paper. The author would also like to thank  Peter Samuelson, Matt Booth and Thomas Wright for their discussions and assistance.
The research was funded through a EPSRC studentship and ERC grant STG-637618.

\section{Skein Categories}
\label{section:skein-categories}
\subsection{Definition of a Skein Category}
\label{sec:skeincatdefn}
A skein category is a categorical analogue of a skein algebra. It was first defined by Walker and Johnson-Freyd \cites[70]{WalkerTQFT}[Section~9]{JF2015}. The definition we use follows that stated by Johnson-Freyd which is based on a generalisation to a general surfaces of the category \(\Ribbon_{\mathscr{V}}\) of coloured ribbon graphs of Reshetikhin and Turaev \cites[Chapter~1]{T94Book}[Section~4]{RT90}. We begin by defining this category \(\Ribbon_{\mathscr{V}}\) for any surface\footnote{Manifolds and surfaces are assumed throughout this paper to be finitary, smooth and oriented.}.

\begin{defn}
A \emph{ribbon graph} is constructed out of a finite number of ribbons and coupons:
\begin{enumerate}
    \item A \emph{strand} is a copy of the unit interval \([0,1]\).
    \item A \emph{ribbon} is a framed strand. The bottom base of the ribbon is \(\{\,0\,\}\) and the top base of the ribbon is \(\{\,1\,\}\). Ribbons have two possible directions: up from the bottom base to the top base \((+)\) or down from the top base to the bottom base \((-)\).  
    \item A \emph{coupon} is a copy of \([0,1]^2\). The bottom base of the coupon is \([0,1] \times \{\,0\,\}\) and the top base of the coupon is \([0,1] \times \{\,1\,\}\).
\end{enumerate}
A base of a ribbon may be attached to the base of a coupon, or to the other base of the ribbon to form an annulus; otherwise, ribbons and coupons are disjoint.
\end{defn}

\begin{figure}[H]
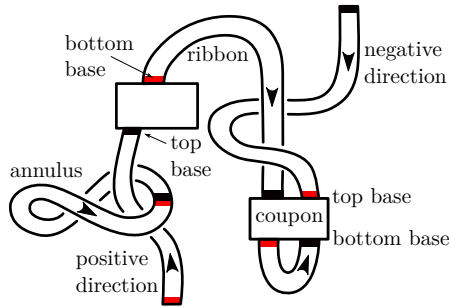

    \centering
    \myincludesvg{height=4cm}{}{ribgraph} 
    \caption{A ribbon graph.}
    \label{fig:ribbongraph}
\end{figure}

\begin{defn}
Fix a strict ribbon\footnote{It is sufficient for \(\mathscr{V}\) to be a strict monoidal category with duals in this definition and for the definition of \(\Ribbon_{\mathscr{V}}(\Sigma)\). However, \(\mathscr{V}\) needs to be a ribbon category to define the ribbon functor \(\eval\) and, hence, in the definition of a skein category.  To avoid confusion we have assumed it is a ribbon category throughout.} category \(\mathscr{V}\). A ribbon graph is \emph{coloured} by \(\mathscr{V}\) as follows:
\begin{enumerate}
    \item Each ribbon is coloured with an object of \(\mathscr{V}\).
    \item For a coupon, let \(V_1, \dots, V_n\) and \(\epsilon_1, \dots, \epsilon_n\) be the colours and directions of the strands attached to the bottom base the coupon, and let \(W_1, \dots, W_m\) and \(\eta_1, \dots, \eta_m\) be the colours and directions of the strands attached to the top base the coupon---the order in which the bands are attached to bases of the coupon gives the ordering. The coupon is coloured by a morphism \(f: V_1^{\epsilon_1} \otimes \dots \otimes V_n^{\epsilon_n} \to W_1^{\eta_1} \otimes \dots \otimes W_m^{\eta_m}\) of \(\mathscr{V}\) where \(X^+ := X\) and \(X^- := X^*\) for \(X \in \mathscr{V}\). 
\end{enumerate}
\end{defn}

\begin{figure}[H]
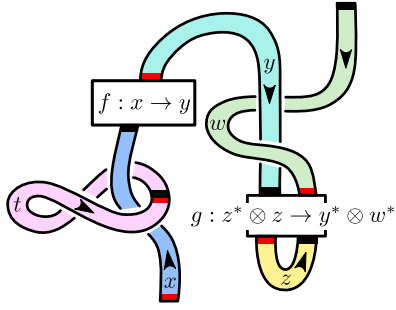

    \centering
    \myincludesvg{height=4cm}{}{colouredribgraph}
    \caption{A coloured ribbon graph.}
    \label{fig:ribbongraphcoloured}
\end{figure}

\begin{defn}
A \emph{\(\mathscr{V}\)-coloured ribbon diagram of a surface \(\Sigma\)} is an embedding of a \(\mathscr{V}\)-coloured ribbon graph into \(\Sigma \times [0,1]\) such that unattached bases of ribbons are sent to \(\Sigma \times \{\,0, 1\,\} \) and otherwise the image lies in \(\Sigma \times (0, 1)\). The coupons must be oriented upwards.
We call its intersection with \(\Sigma \times \{\,0\,\} \) the bottom of the diagram and its intersection with \(\Sigma \times \{\,1\,\}\) the top of the diagram.
\end{defn}

\begin{defn}
Two coloured strand diagrams are \emph{isomorphic} if there is a finite sequence of isotopies from one to the other; each isotopy must be fixed except in the interior of a \(3\)-ball and preserve the ribbon graph structure i.e.\ the attachments of ribbons and coupons, directions of ribbons and colouring.
\end{defn}

\begin{defn}
Fix a strict ribbon category \(\mathscr{V}\) and a surface \(\Sigma\). The \(k\)-linear category of \(\mathscr{V}\)-coloured ribbon diagrams in \(\Sigma\) is denoted \(\Ribbon_{\mathscr{V}}(\Sigma)\): 
\begin{stages}
    \item An object of \(\Ribbon_{\mathscr{V}}(\Sigma)\) is a finite set \(\left\{\,x^{(V_1, \epsilon_1)}_1, \dots , x^{(V_n, \epsilon_n)}_n\,\right\}\) of disjoint framed points \(x_i \in \Sigma\) coloured by objects \(V_i \in \mathscr{V}\) and given directions \(\epsilon_i \in \left\{\,+, -\,\right\}\).
    \item A morphism \(F: \left\{\,x^{(V_1, \epsilon_1)}_1, \dots , x^{(V_n, \epsilon_n)}_n\,\right\} \to \left\{\,y^{(W_1, \delta_1)}_1, \dots , y^{(W_m, \delta_m)}_m\,\right\}\) is a finite linear combination \(F = \sum_i \lambda_i F_i\)  where \(\lambda_i \in k\) and \(F_i\) is a \(\mathscr{V}\)-coloured ribbon diagram such that bottom of the diagram is \(\left\{\,x^{(V_1, \epsilon_1)}_1, \dots , x^{(V_n, \epsilon_n)}_n\,\right\}\) and the top is \(\left\{\,y^{(W_1, \delta_1)}_1, \dots , y^{(W_m, \delta_m)}_m\,\right\}\)\footnote{Note that this includes the colouring, framings and directions matching.}.
    \item The identity morphism \(\Id_{\left\{\,x^{(V_1, \epsilon_1)}_1, \dots , x^{(V_n, \epsilon_n)}_n\,\right\}}\) is the ribbon diagram consisting of \(n\) ribbons which are fixed in \(\Sigma\)-coordinate and framing (up to isomorphism).
    \item The composition of morphisms \(F = \sum_i \lambda_i F_i\) and \(G = \sum_j \mu_j G_j\) is \(G \circ F = \sum_{i,j} \lambda_i \mu_j G_j \circ F_i\). The ribbon diagram
    \(G_j \circ F_i\) is formed by stacking and then retracting \(\Sigma \times [0,2]\) to \(\Sigma \times [0, 1]\). When forming \(G_j \circ F_i\) the strands of \(F_i\) attached to the top of its diagram and the strands of \(G_j\) attached to the bottom of its diagram are merged.
\end{stages}
\end{defn}

\begin{rmk}
\label{rmk:embedding}
Note that an embedding of surfaces \(p: \Sigma \to \Pi\) induces a functor \(P: \Sk(\Sigma) \to \Sk(\Pi)\) of skein categories: on the object \(\left\{\,x^{(V_i, \epsilon_i)}_i\,\right\}\), \(P\) is defined by \(P\left(x^{(V_i, \epsilon_i)} \right) = P \left(x^{(V_i, \epsilon_i)} \right)\) and, on the morphism \(F = \sum_i \lambda_i F_i\), \(P\) is defined by \(P(F_i) = \left( p \times \Id_{[0, 1]} \right) (F_i)\).
\end{rmk}
\begin{rmk}
\label{rmk:canonicalmonoidal}
When \(\Sigma = C \times [0,1]\), for some \(1\)-manifold \(C\), the \(k\)-linear category \(\Ribbon_{\mathscr{V}}(\Sigma)\) can be equipped with a monoidal structure\footnote{The monoidal unit is the empty set.} induced by the embedding
\[
I: \left( C \times [0,1] \right) \sqcup \left( C \times [0,1] \right) \xhookrightarrow{} C \times [0,1]
\]
which retracts both copies of \(C \times [0,1]\) in the second coordinate and includes them into another copy of \(C \times [0,1]\); we shall denote the retractions \(l\) and \(r\) respectively.
Furthermore, the monoidal category \(\Ribbon_{\mathscr{V}}(C \times [0,1])\) has duals with the dual of an object obtained by flipping directions: 
 \[\left\{\,x^{(V_1, \epsilon_1)}_1, \dots , x^{(V_n, \epsilon_n)}_n\,\right\}^* := \left\{\,x^{(V_1, -\epsilon_1)}_1, \dots , x^{(V_n, -\epsilon_n)}_n\,\right\}:\]
 the unit and counit are given by the cap and cup respectively. Finally, equipping \(\Ribbon_{\mathscr{V}}(C \times [0,1])\) with the braiding and twist given by crossing ribbons and twisting ribbons, as depicted in \cref{fig:twistandbraiding}, turns \(\Ribbon_{\mathscr{V}}(C \times [0,1])\) into a ribbon category. In particular, \(\Ribbon_{\mathscr{V}}([0,1]^2)\) is a ribbon category.
 \begin{figure}
     \centering
     \subfloat{\myincludesvg{height=3cm}{}{twist}}
     \qquad
     \subfloat{\myincludesvg{height=2.9cm}{}{ribbonbraid}}
     \caption{The twist \(\theta_V: V \to V\) and the braiding \(B_{U, V}: U \otimes V \to V \otimes U\) of \(\Ribbon_{\mathscr{V}}(C \times [0,1])\).}
     \label{fig:twistandbraiding}
 \end{figure}
\end{rmk}

\begin{prop}[{\cite[Theorem~2.5]{T94Book}}]
\label{prop:Ribbonfunctor}
Let \(\mathscr{V}\) be a strict ribbon category. There is a full ribbon functor
\[\eval: \Ribbon_{\mathscr{V}}([0,1]^2) \to \mathscr{V}\]
which is surjective on objects.
\end{prop}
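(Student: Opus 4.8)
The plan is to recognise $\Ribbon_{\mathscr{V}}([0,1]^2)$ as (a mild repackaging of) the category of $\mathscr{V}$-coloured ribbon graphs of Reshetikhin and Turaev, and to take $\eval$ to be the Reshetikhin--Turaev functor. First I would fix, for each object $\{\,x_1^{(V_1,\epsilon_1)},\dots,x_n^{(V_n,\epsilon_n)}\,\}$, a linear ordering of its points --- for instance by placing the configuration in generic position and ordering by the first coordinate --- and declare $\eval\bigl(\{\,x_i^{(V_i,\epsilon_i)}\,\}\bigr) := V_1^{\epsilon_1}\otimes\dots\otimes V_n^{\epsilon_n}$. In particular a single point coloured by $X$ with direction $+$ is sent to $X$, which already gives surjectivity on objects.

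On morphisms I would use a height (Morse) function on $[0,1]^3$ to decompose a $\mathscr{V}$-coloured ribbon diagram into a vertical composite of elementary slices placed side by side: identity strands, a single cup, a single cap, a single positive or negative crossing, a single positive or negative twist, and a single coupon. Then $\eval$ is declared to send a cup and a cap to the coevaluation and evaluation morphisms, a crossing to the braiding or its inverse, a twist to $\theta$ or $\theta^{-1}$, a coupon coloured by $f$ to $f$, and to be extended $k$-linearly.

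The crux --- and the only genuinely hard point --- is that this assignment is independent of the chosen decomposition and descends to isotopy classes of diagrams. This is precisely \cite[Theorem~2.5]{T94Book}, following \cite[Section~4]{RT90}: any two elementary decompositions of isotopic diagrams differ by a finite list of local moves, and each move corresponds to one of the defining axioms of a ribbon category (naturality and the hexagon identities for the braiding, the zig-zag duality identities, the balancing axiom relating $\theta$ to the braiding and the duals, and the naturality of all of these with respect to coupon colours). I would invoke this theorem rather than re-derive the list of moves.

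It then remains to verify the remaining claims. For fullness, any morphism $g\colon X\to Y$ of $\mathscr{V}$ is the image of the ribbon diagram consisting of one coupon coloured by $g$ with a single $X$-coloured strand attached below and a single $Y$-coloured strand attached above, the connecting strands evaluating to identities; hence $\eval$ is full. That $\eval$ is a ribbon functor is immediate from the way the monoidal, braided, rigid and balanced structures on $\Ribbon_{\mathscr{V}}([0,1]^2)$ were set up in \cref{rmk:canonicalmonoidal}: side-by-side placement is sent to $\otimes$; the braiding and twist of \cref{fig:twistandbraiding} are by construction the crossing and twisted-strand slices, so they go to the braiding and twist of $\mathscr{V}$; and reversing the direction of a point corresponds to passing to a dual object, so the duality data is preserved on the nose.
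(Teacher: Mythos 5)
Your proposal is correct and matches the paper's approach: the paper gives no proof of its own, simply citing \cite[Theorem~2.5]{T94Book}, and your outline accurately reconstructs what that theorem does --- decompose diagrams into elementary slices via a height function, assign the structural morphisms of $\mathscr{V}$, and invoke Turaev's invariance-under-local-moves theorem for well-definedness. The only caveat worth flagging is your claim that duality is preserved ``on the nose'': with a fixed ordering, the dual of $\eval(\{x_i^{(V_i,\epsilon_i)}\})$ in a strict ribbon category reverses the tensor order, $V_n^{-\epsilon_n}\otimes\cdots\otimes V_1^{-\epsilon_1}$, whereas $\eval$ applied to the dual object yields $V_1^{-\epsilon_1}\otimes\cdots\otimes V_n^{-\epsilon_n}$; these agree only up to a canonical (braiding) isomorphism, which is all a ribbon functor requires but is not strict equality.
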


To define the skein category \(\Sk_{\mathscr{V}}(\Sigma)\), we take the ribbon diagram category \(\Ribbon_{\mathscr{V}}(\Sigma)\) and force it to locally satisfy the relations satisfied in \(\mathscr{V}\):

\begin{defn}
Let \(\Sigma\) be a surface and \(\mathscr{V}\) be a strict ribbon category. The \(k\)-linear category \(\Sk_{\mathscr{V}}(\Sigma)\) is \(\Ribbon_{\mathscr{V}}(\Sigma)\) modulo the following relation on the morphisms of \(\Ribbon_{\mathscr{V}}(\Sigma)\): the morphism \(\sum_i \lambda_i F_i \sim 0\) if there exists an orientation preserving embedding 
\[E: [0,1]^3 \xrightarrow{} \Sigma \times [0,1]\] 
satisfying the following:
\begin{enumerate}
    \item the intersection of \(F_i\) with the boundary of the cube \(E(\partial [0,1]^3)\) consists only of transverse ribbons of \(F_i\) intersecting the top and bottom edge of the cube;
    \item the \(F_i\) are equal outside of \(E([0,1]^3)\); 
    \item \(\sum_i \lambda_i \eval\left( E^{-1}(F_i \cap E([0,1]^3) \right) = 0\) (\(\eval\) is the functor from \cref{prop:Ribbonfunctor}).
\end{enumerate}
\end{defn}

Directly from the definition and \cref{prop:Ribbonfunctor} we obtain the following corollary:
\begin{cor}
\label{cor:disc}
Let \(\mathscr{V}\) be a strict ribbon category. Then there is an equivalence of ribbon categories 
\[\Sk_{\mathscr{V}}\left([0,1]^2\right) \simeq \mathscr{V}.\]
\end{cor}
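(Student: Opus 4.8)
The plan is to construct the equivalence $\Sk_{\mathscr{V}}([0,1]^2) \simeq \mathscr{V}$ by descending the functor $\eval$ of \cref{prop:Ribbonfunctor} through the quotient defining $\Sk_{\mathscr{V}}$. First I would observe that $\eval: \Ribbon_{\mathscr{V}}([0,1]^2) \to \mathscr{V}$ kills every generating relation of the form $\sum_i \lambda_i F_i \sim 0$: indeed, when $\Sigma = [0,1]^2$ the whole diagram lives in a cube, and for any embedding $E: [0,1]^3 \to [0,1]^2 \times [0,1]$ witnessing such a relation, the condition $\sum_i \lambda_i \eval(E^{-1}(F_i \cap E([0,1]^3))) = 0$ together with functoriality of $\eval$ (it is a ribbon functor, hence $k$-linear and monoidal) forces $\sum_i \lambda_i \eval(F_i) = 0$ — one just writes each $F_i$ as a composition/tensor product of the part inside the cube with a fixed part outside, and the outside part contributes a common factor. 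Hence $\eval$ factors through the quotient, giving a ribbon functor $\overline{\eval}: \Sk_{\mathscr{V}}([0,1]^2) \to \mathscr{V}$, which is still full and surjective on objects since $\eval$ was.

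It then remains to show $\overline{\eval}$ is faithful; fullness and essential surjectivity are inherited. For faithfulness, suppose $F = \sum_i \lambda_i F_i$ and $G = \sum_j \mu_j G_j$ are morphisms in $\Ribbon_{\mathscr{V}}([0,1]^2)$ with $\eval(F) = \eval(G)$, i.e.\ $\eval(F - G) = 0$. I want to conclude $F \sim G$ in $\Sk_{\mathscr{V}}$, equivalently that $F - G \sim 0$. The natural move is to take $E = \Id$ (or a mild reparametrisation): the single cube $[0,1]^3$ containing the entire diagram, with the transversality-at-top-and-bottom condition arranged by isotoping the finitely many ribbon endpoints to sit vertically near $\Sigma \times \{0,1\}$. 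Since all the $F_i - G_j$ agree outside this cube (they agree everywhere, as there is nothing outside) and $\sum \lambda_i \eval(E^{-1}F_i) - \sum \mu_j \eval(E^{-1}G_j) = \eval(F) - \eval(G) = 0$, the defining relation applies directly and $F - G \sim 0$. Thus $\overline{\eval}$ is an equivalence of $k$-linear categories, and being a ribbon functor it is an equivalence of ribbon categories.

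The main obstacle I anticipate is the bookkeeping in the first step: verifying carefully that $\eval$ respects the quotient relation requires decomposing an arbitrary ribbon diagram as a morphism built (via $\circ$ and $\otimes$, up to the isomorphisms making $\Ribbon_{\mathscr{V}}$ into a strict-ish ribbon category) from its restriction to a sub-cube and the complementary piece, and checking that $\eval$ applied to this decomposition genuinely separates as a product in which the inside-cube factor appears linearly. This is where one uses that $\eval$ is a ribbon functor — monoidality, compatibility with duals, braiding and twist — so that reparametrising and re-bracketing the cube does not change $\eval$. A secondary, more technical point is ensuring the transversality hypothesis (1) in the definition of the relation can always be met after isotopy without changing the morphism class in $\Ribbon_{\mathscr{V}}$; this is routine since isotopies supported in $3$-balls are exactly the equivalences already built into $\Ribbon_{\mathscr{V}}$, and straightening strands near the boundary of the cube is such an isotopy.
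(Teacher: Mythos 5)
Your proposal is correct and takes essentially the same route as the paper: the paper simply states that the equivalence follows ``directly from the definition and Proposition~\ref{prop:Ribbonfunctor}'' without spelling out the descent, fullness/faithfulness, and essential surjectivity checks that you carry out. Your argument is the natural unpacking of that one-line justification --- in particular, your observations that $\eval$ kills the skein relations (by factoring out the common part outside any witnessing cube) so it descends, and that when $\Sigma = [0,1]^2$ the whole diagram can serve as the cube (after straightening endpoints) so the induced functor is automatically faithful --- are exactly the content hidden behind ``directly''.
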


\subsubsection{Diagrams}

A morphism of a skein category is a linear combination of equivalence classes of coloured ribbon diagrams in \(\Sigma \times [0,1]\). A coloured ribbon diagram \(R\) can be depicted by diagrams drawn on \(\Sigma\) in a way generalising knot diagrams as follows: Deform the ribbon diagram \(R\) so that, with the exception of ribbons attached to the top and bottom of the diagram, it lies almost parallel and very close to \(\Sigma \times \{\,1/2\,\}\). The ribbons attached to the top and bottom of the diagram can be deformed so that they are constant in the \(\Sigma\) direction except very close to \(\Sigma \times \{\,1/2\,\}\). Further deform \(R\) so that the coupons of \(R\) lie in \(\Sigma \times \{\,1/2\,\}\), so that no ribbons lie directly above or below (in the \(t\)-coordinate\footnote{Ribbon diagrams lie in \(\Sigma \times [0,1]\). In this paper we refer to \([0,1]\) as the \(t\)-coordinate or direction.}) coupons, and at most two ribbons lie above or below each other. After having deformed \(R\) in this manner we draw the projection of the ribbon diagram onto \(\Sigma \times \{\,1/2\,\}\) taking account under and over crossings, and making start and end intervals as such. The original ribbon diagram \(R\) can be recovered up to isotopy from this diagram. 

\begin{figure}[H]
    \centering
    \myincludesvg{height=4cm}{}{ribdiagram}
    \label{fig:diagrams}
\end{figure}

Note that, the diagram of the composition \(S \circ R\) is obtained by placing the diagram of \(S\) on top of the diagram of \(R\) and removing the start and end points which now join up. 
\subsection{Tambara Relative Tensor Product of \(k\)-linear Categories}
\label{sec:tambarareltensorproduct}
There are two definitions of the relative tensor products used in this paper. 
\begin{enumerate}
    \item The first definition is the definition of the relative tensor product \(k\)-linear categories defined by Tambara in \cite[Section~1]{Tambara01}. It is a categorical analogue of the definition of the relative tensor product of modules. As skein categories are \(k\)-linear categories, it will be used the define the relative tensor product of skein categories which is needed to formulate the notion of excision of skein categories. 
    \item The second definition is a definition which applies to any \(\otimes\)-presentable \((\infty, 1)\)-category \(\mathscr{C}^{\otimes}\). The definition is as the colimit of the bar construction (see \cref{defn:barrelativetensorproduct}). This is the definition used to formulate the notion of excision of factorisation homologies.
\end{enumerate}
In \cref{section:relative-tensor-products}, we shall prove that when \(\mathscr{C}^{\otimes}\) is the \((2,1)\)-category of \(k\)-linear categories, \(\Cat_k^{\times}\), that these two definitions coincide. This is an important step in relating skein categories to factorisation homology (\cref{section:fact-homologies} and \cref{section:character-varieties}). However in \cref{section:skein-categories}, we only need to use Tambara's definition which we shall now define.

Tambara \cite[Section~1]{Tambara01} defines the relative tensor product \(\mathscr{M} \otimes_{\mathscr{A}} \mathscr{M}\) of the right \(\mathscr{A}\)-module \(k\)-linear category \(\mathscr{M}\) and the left \(\mathscr{A}\)-module \(k\)-linear category \(\mathscr{N}\) relative to the \(k\)-linear monoidal category \(\mathscr{A}\).

\begin{defn}
Let \(\mathscr{A}\) be a monoidal \(k\)-linear category. A \emph{left \(\mathscr{A}\)-module category} is a \(k\)-linear category \(\mathscr{M}\) equipped with a \(k\)-bilinear functor 
\[
\rhd : \mathscr{A} \otimes \mathscr{M} \to \mathscr{M}: (a, m) \mapsto a \rhd m, \]
a natural isomorphism
\[
    \beta: \_ \rhd (\_ \rhd \_) \to (\_ \otimes \_) \rhd \_ \text{ with components } \beta_{a,b,m}: a \rhd (b \rhd m) \to (a \otimes b) \rhd m 
\]
called the \emph{associator}, and a natural isomorphism
\[
    \eta : 1_{\mathscr{A}} \rhd \_ \to \_  \text{ with components } \eta_m: 1_{\mathscr{A}} \rhd m \to m
\]
called the \emph{unitor} which make the certain diagrams commute (see for example \cite[Definition~4.1.2]{CookeThesis}).
The definition for a right \(\mathscr{A}\)-module category is analogous.
\end{defn}

\begin{defn}
A bilinear functor \(F : \mathscr{M} \times \mathscr{N} \to \mathscr{C}\) is \emph{\(\mathscr{A}\)-balanced} if there is a natural isomorphism which on components is
\[\iota_{m, a, n} : F(m \lhd a, n) \to F(m, a \rhd n)\]
satisfying the commutative diagram 
\[
\begin{tikzpicture}[commutative diagrams/every diagram]
\node (P0) at (90:2.3cm) {\(((m \lhd a) \lhd b, n) \) };
\node (P1) at (90+72:2cm) {\((m \lhd a, b \rhd n)\)} ;
\node (P2) at (90+2*72:2cm) {\makebox[5ex][r]{\((m, a \rhd(b \rhd n))\)}};
\node (P3) at (90+3*72:2cm) {\makebox[5ex][l]{\((m, (a \ast b) \rhd n)\)}};
\node (P4) at (90+4*72:2cm) {\((m \lhd (a \ast b), n) \)};
\path[commutative diagrams/.cd, every arrow, every label]
(P0) edge node[swap] {\(\iota_{m \lhd a, b, n}\)} (P1)
(P1) edge node[swap] {\(\iota_{m,a,b \rhd n}\)} (P2)
(P2) edge node {\((\Id_m, \beta_{a, b, n})\)} (P3)
(P4) edge node {\(\iota_{m, a \ast b, n}\)} (P3)
(P0) edge node {\((\beta_{m,a,b},\Id_n)\)} (P4);
\end{tikzpicture}
\]
for all \(m \in \mathscr{M}\), \(a, b \in \mathscr{A}\) and \(n \in \mathscr{N}\).
\end{defn}

\begin{defn}
The natural transformation \(\alpha: F \xRightarrow{} G\) of \(\mathscr{A}\)-balanced functors \(F, G: \mathscr{M} \times \mathscr{N} \to \mathscr{C}\) is a \emph{\(\mathscr{A}\)-balanced natural transformation} if it is compatible with the balancings, i.e.\ the following diagram commutes
\[
\begin{tikzcd}[row sep=3em, column sep=3em]
F(m \lhd a, n) 
        \ar[r, "(\iota_F)_{m, a, n}"]
        \ar[d, "\alpha_{(m \lhd a, n)}"]
    & F(m, a \rhd n) 
        \ar[d, "\alpha_{(m, a \rhd n)}"]\\
G(m \lhd a, n) 
        \ar[r, "(\iota_G)_{m, a, n}"]
    & G(m, a \rhd n)
\end{tikzcd}
\]
\end{defn}

\begin{defn}
Let \(\bal(\mathscr{M}, \mathscr{N}; \mathscr{C})\) be the category with objects being the \(\mathscr{A}\)-balanced functions \(\mathscr{M} \times \mathscr{N} \to \mathscr{C}\) and morphisms being the \(\mathscr{A}\)-balanced natural transformations.
\end{defn}

\begin{defn}[{\cite[Section~1]{Tambara01}}]
Let \(\mathscr{A}\) be a \(k\)-linear monoidal category, let \(\mathscr{M}\) be a right \(\mathscr{A}\)-module \(k\)-linear category, and let \(\mathscr{N}\) be a left \(\mathscr{A}\)-module \(k\)-linear category. The \emph{relative tensor product} of \(\mathscr{M}\) and \(\mathscr{N}\) relative to \(\mathscr{A}\) is a \(k\)-linear category \(\mathscr{M} \otimes_{\mathscr{A}} \mathscr{N}\) together with a \(\mathscr{A}\)-balanced functor \(P: \mathscr{M} \times \mathscr{N} \to \mathscr{M} \otimes_{\mathscr{A}} \mathscr{N}\) such that for all \(k\)-linear categories \(\mathscr{C}\) there is an equivalence of categories 
\[ 
\Cat_k(\mathscr{M} \otimes_{\mathscr{A}} \mathscr{N}, \mathscr{C}) \simeq \bal(\mathscr{M}, \mathscr{N}; \mathscr{C})
\]
given by precomposing functors with \(P\). 
\end{defn}

Tambara proves the existence of such a relative tensor product by constructing it. 

\begin{defn}[{\cite[Section~1]{Tambara01}}]
\label{defn:reltensorproduct}
Let \(\mathscr{A}\) be a \(k\)-linear monoidal category, let \(\mathscr{M}\) be a \(k\)-linear right \(\mathscr{A}\)-module category, and let \(\mathscr{N}\) be a \(k\)-linear left \(\mathscr{A}\)-module category. The \emph{relative tensor product} \(\mathscr{M} \otimes_{\mathscr{A}} \mathscr{N}\) is the \(k\)-linear category with the following generators and relations. The objects of \(\mathscr{M} \otimes_{\mathscr{A}} \mathscr{N}\) are pairs \((m, n)\) where \(m \in \mathscr{M}\) and \(n \in \mathscr{N}\).
The morphisms of \(\mathscr{M} \otimes_{\mathscr{A}} \mathscr{N}\) are generated by morphisms \((f, g)\), where \(f : m \to m'\) is a morphism in \(\mathscr{M}\) and \(g: g \to g'\) is a morphism in \(\mathscr{N}\), and by morphisms \(\iota_{m,a,n}: (m \lhd a, n) \to (m, a \rhd n)\) and \(\iota^{-1}_{m,a,n}: (m, a \rhd n) \to (m \lhd a, n)\), where \(m \in \mathscr{M}\), \(a \in \mathscr{A}\) and \(n \in \mathscr{N}\). The morphisms satisfy the following relations:
\begin{description}
\item[Linearity] \((f + f', g) = (f, g) + (f', g)\), \((f, g + g') = (f, g) + (f, g')\) and \(a(f,g) = (af, g) = (f, ag)\);
\item[Functionality] \((f'f,g'g) = (f', g') \circ (f, g)\) and \((\Id_{m}, \Id_{n}) = \Id_{(m,n)}\);
\item[Isomorphism] \(\iota_{m,a,n} \circ \iota^{-1}_{m,a,n} = \Id_{(m, a \rhd n)}\) and \(\iota^{-1}_{m,a,n} \circ \iota_{m,a,n} = \Id_{(m \lhd a, n)}\);
\item[Naturality] \(\iota_{m', a', n'} \circ (f \lhd u, g) = (f, u \rhd g) \circ \iota_{m, a, n}\) 
\item[Pentagon]
\[
\begin{tikzpicture}[commutative diagrams/every diagram]
\node (P0) at (90:2.3cm) {\(((m \lhd a) \lhd b, n) \) };
\node (P1) at (90+72:2cm) {\((m \lhd a, b \rhd n)\)} ;
\node (P2) at (90+2*72:2cm) {\makebox[5ex][r]{\((m, a \rhd(b \rhd n))\)}};
\node (P3) at (90+3*72:2cm) {\makebox[5ex][l]{\((m, (a \ast b) \rhd n) \)}};
\node (P4) at (90+4*72:2cm) {\((m \lhd (a \ast b), n) \)};
\path[commutative diagrams/.cd, every arrow, every label]
(P0) edge node[swap] {\(\iota_{m \lhd a, b, n}\)} (P1)
(P1) edge node[swap] {\(\iota_{m,a,b \rhd n}\)} (P2)
(P2) edge node {\((\Id_m, \beta_{a, b, n})\)} (P3)
(P4) edge node {\(\iota_{m, a \ast b, n}\)} (P3)
(P0) edge node {\((\beta_{m,a,b},\Id_n)\)} (P4);
\end{tikzpicture}
\]
\item[Triangle]
\[
\begin{tikzcd}[column sep=3em, row sep=3em]
(m \lhd \Id_{\mathscr{A}}, n) 
        \ar[r, "\iota_{m, \Id_{\mathscr{A}}, n}"]
        \ar[d, "{(\theta_m, \Id_n)}"]
    & (m, \Id_{\mathscr{A}}\rhd n) 
        \ar[ld, "{(\Id_m, \eta_n)}"] \\
(m, n)
    &
\end{tikzcd}
\]
\end{description}
The \(\mathscr{A}\)-balanced bilinear functor \(P: \mathscr{M} \times \mathscr{N} \to \mathscr{M} \otimes_{\mathscr{A}} \mathscr{N}\) is defined by \(P(m,n) = (m, n)\) on objects and \(P(f,g) = (f,g)\) on morphisms. 
\end{defn}
\subsection{Module Structure and the Relative Tensor Product of Skein Categories}
Let \(C\) be a \(1\)-manifold.
We have already seen that \(\Sk(C \times [0,1])\) is a monoidal category (\cref{rmk:canonicalmonoidal}). Suppose that we have a surface \(M\) with boundary \(\partial M\). We shall now show how a suitable embedding of \(C\) into \(\partial M\) equips \(\Sk(M)\) with a \(\Sk(C \times [0,1])\)-module structure.

\begin{defn}
Let \(C\) be a \(1\)-manifold and \(M\) be a surface with boundary \(\partial M\). A \emph{thickened right embedding} of \(C\) into the boundary of \(M\) consists of
\begin{enumerate}
    \item An embedding \(\Xi : C \times (-\epsilon, 1] \xhookrightarrow{} M\) such that its restriction to \(C \times \{\,1\,\}\) gives an embedding \(\xi : C \xhookrightarrow{} \partial M\). We define the restriction \(\Phi := \Xi|_{C \times [0,1]}\) and the restriction \(\mu := \Xi|_{C \times \{\,0\,\}} \).
    \item An embedding \(E: M \xrightarrow{} M\) such that \(\im(E)\) is disjoint from \(\im(\Phi)\).
    \item An isotopy \(\lambda : M \times [0,1] \to M \) from \(\Id_M\) to \(E\) which is trivial outside of \(\im(\Xi)\). 
\end{enumerate}

A \emph{thickened left embedding} is defined similarly except \(\Xi\) is an embedding \(\Xi : C \times [0, 1 + \epsilon) \xhookrightarrow{} M\) such that its restriction to \(C \times \{\,0\,\}\) gives an embedding \(\xi : C \xhookrightarrow{} \partial M\).
\end{defn}

\begin{rmk}
\label{rmk:naturalityofiso}
Let \(F, G : M \xrightarrow{} M\) be two embeddings and let \(\sigma: M \times [0,1] \to M\) be an isotopy from \(F\) to \(G\). This isotopy traces out for any \(m \in \Sk(M)\) a ribbon tangle
\[r_{\sigma, m} : F(m) \to G(m).\]
In particular if \((\Xi, E, \lambda)\) is a thickened embedding of \(C\) into the boundary of \(M\) then the isotopy \(\lambda : M \times [0,1] \to M \) traces out for any \(m \in \Sk(M)\) a ribbon tangle \(r_{\lambda, m}: m \to E(m)\). 
We also have for any  \(a \in \Sk(C \times [0,1])\) ribbon tangles \(r_{l,a}: a \to a \ast \emptyset\) and  \(r_{r, a}: a \to \emptyset \ast a\) where \(l\) and \(r\) are the retractions used to define the monoidal structure of \(\Sk(C \times [0,1])\).
Furthermore, for any ribbon tangle \(f:m \to m'\) we have that 
\[r_{\lambda, m'} \circ f = E(F) \circ r_{\lambda, m}\]
and similarly \(r_{l,a}\) and  \(r_{r, a}\) `commute' with any ribbon tangle \(g: a \to a'\).
\end{rmk}

\begin{defn}
\label{defn:inducedmodule}
Given a thickened right embedding \((\Xi, E, \lambda)\) of \(C\) into the boundary of \(M\),
\(\Sk(M)\) is a \emph{right \(\Sk(C \times [0,1])\)-module} with \emph{action} 
\[\lhd : \Sk(M) \times \Sk(C \times [0, 1]) \to \Sk(M)\]
induced from the embedding of surfaces 
\[M \sqcup (C \times [0,1]) \to M : M \sqcup A \mapsto E(M) \sqcup \Phi(A).\]
The \emph{associator \(\beta\)} is 
defined as 
\begin{align*}
    &\beta_{m, a, b}: (m \lhd a) \lhd b \to m \lhd (a \otimes b)\\
    &\beta_{m, a, b} := r_{\lambda^{-1}, (m \lhd \emptyset) \lhd \emptyset} \sqcup \left( r_{l, \emptyset \lhd a} \circ r_{\lambda^{-1}, (\emptyset \lhd a) \lhd a} \right) \sqcup r_{r, (\emptyset \lhd \emptyset) \lhd b}
\end{align*}
and the \emph{unitor \(\eta\)} is defined as
\[\eta_{m}:= r_{\lambda, m}^{-1}: m \lhd \emptyset \to m.\]
Analogously, a thickened left embedding \((\Xi, E, \lambda)\) of \(C\) into the boundary of \(N\) defines a left \(\Sk(C \times [0,1])\)-module structure on \(\Sk(N)\).
\end{defn}

\begin{figure}[ht]
    \centering
    \myincludesvg{height=2cm}{}{associator}
    \label{fig:rho}
\end{figure}

As skein categories are \(k\)-linear, we may define the relative tensor product of skein categories to be their relative tensor product as \(k\)-linear categories. 

\begin{defn}
\label{defn:reltensorproductskeincats}
Let \(C\) be a \(1\)-manifold with a thickened right embedding \((\Xi_M, E_M, \lambda_M)\) into the boundary of the surface \(M\) and a thickened left embedding \((\Xi_N, E_N, \lambda_N)\) into the boundary of the surface \(N\).
By \cref{defn:inducedmodule}, \(\Sk(M)\) is a right \(\Sk(C \times [0,1])\)-module and \(\Sk(N)\) is a left \(\Sk(C \times [0,1])\)-module. 
The \emph{relative tensor product} \(\Sk(M) \otimes_{\Sk(A)} \Sk(M)\) is the relative tensor product as \(k\)-linear categories of \(\Sk(M)\) and \(\Sk(C)\) relative to \(\Sk(A)\) (See \cref{defn:reltensorproduct}).
\end{defn}

\begin{figure}[ht]
    \centering
    \myincludesvg{width=0.33\textwidth}{}{actions}
    \label{fig:leftrightactions}
\end{figure}

\begin{rmk}
To simplify notation we shall define \(A := C \times [0, 1]\).
\end{rmk}
\subsection{Proof of the Excision of Skein Categories}
\label{sec:proofexcision}
In this subsection, we prove the following theorem:
\begin{thm}[(Excision of Skein Categories)]
\label{thm:skeinexcision} 
~\\
Let \(C\) be a \(1\)-manifold with a thickened right embedding \((\Xi_M, E_M, \lambda_M)\) into the boundary of the surface \(M\) and a thickened left embedding \((\Xi_N, E_N, \lambda_N)\) into the boundary of the surface \(N\). Let \(A := C \times [0,1]\), \(\Sk(M) \otimes_{\Sk(A)} \Sk(N)\) be the relative tensor product of skein categories (\cref{defn:reltensorproductskeincats}), and \(M \sqcup_{A} N\) be the gluing
\[
M \sqcup_{A} N := M \sqcup N \left/ \middle\{\, \xi_N(g,i) \sim \xi_N(g,1-i) \; \middle| \; g \in \bigsqcup_i\gamma_i , i \in [0,1]\, \right\}.
\]
The thickened embeddings define a \(k\)-linear functor
\[F: \Sk(M) \underset{\Sk(A)}{\otimes} \Sk(N) \xrightarrow{\sim} \Sk(M \sqcup_{A} N)\]
which is an equivalence of categories.
\end{thm}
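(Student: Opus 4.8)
The plan is to construct $F$ explicitly on generators, check it is well-defined (respects the relations in Tambara's presentation of $\Sk(M)\otimes_{\Sk(A)}\Sk(N)$), and then prove it is an equivalence by constructing an essentially inverse assignment directly on ribbon diagrams. First I would define $F$ on objects by $F(m,n) := \iota_M(m) \sqcup \iota_N(n)$, where $\iota_M:\Sk(M)\to\Sk(M\sqcup_A N)$ and $\iota_N:\Sk(N)\to\Sk(M\sqcup_A N)$ are the functors induced (via \cref{rmk:embedding}) by the evident embeddings of $M$ and $N$ into the glued surface, composed with $E_M$ and $E_N$ so that the images are disjoint and sit away from the gluing locus. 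On a generating morphism $(f,g)$ we set $F(f,g) := \iota_M(f)\sqcup\iota_N(g)$ (these have disjoint support so the juxtaposition is a legitimate ribbon diagram), and on the balancing isomorphisms $\iota_{m,a,n}$ we use the ribbon tangle traced out by an isotopy of $M\sqcup_A N$ that slides the copy of $a$ from the $M$-collar through the gluing region into the $N$-collar — this is built from the $r_{\lambda,-}$, $r_{l,-}$, $r_{r,-}$ tangles of \cref{rmk:naturalityofiso} exactly as the associator and unitor were in \cref{defn:inducedmodule}. Checking the \textbf{Linearity}, \textbf{Functionality}, \textbf{Isomorphism}, \textbf{Naturality}, \textbf{Pentagon} and \textbf{Triangle} relations is then a matter of observing that all of these are statements about isotopies of ribbon tangles in the collar $C\times[0,1]$ region, which hold because the ambient isotopies compose/cancel appropriately; the pentagon and triangle in particular reduce to the coherence of the retractions $l,r$ and the isotopy $\lambda$, already implicitly used to make $\Sk(A)$ ribbon and to define the module structures.

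For essential surjectivity, given an object $\{x_i^{(V_i,\epsilon_i)}\}$ of $\Sk(M\sqcup_A N)$, one isotopes it off the gluing circles $\bigsqcup_i\gamma_i\subset M\sqcup_A N$ (a generic point configuration misses the $1$-manifold image of $C$), so every point lies in (the interior of) either the $M$-part or the $N$-part; this exhibits it as isomorphic to some $F(m,n)$. For fullness and faithfulness — the real content — I would show that any morphism of $\Sk(M\sqcup_A N)$ can be represented by a ribbon diagram in \emph{generic position} with respect to the gluing: the diagram meets a small neighbourhood $\gamma_i\times[0,1]\times[0,1]$ of each gluing circle only in finitely many transverse vertical strands, so it decomposes as (a diagram living in $M$) $\circ$ (elementary ``slide'' pieces supported in the collar) $\circ$ (a diagram living in $N$), i.e. it is in the image of $F$ up to the balancing isomorphisms. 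Any two such decompositions differ by isotopies and skein relations that can each be localised either inside $M$, inside $N$, or inside a collar $A$ — this is precisely where the local nature of the skein relation (evaluations on an embedded cube) is used, together with a general-position/transversality argument to push the cube off the gluing locus or into a single piece. The topological heart, following Yetter~\cite{Yetter}, is a normal-form argument: every diagram can be put into a standard form adapted to the decomposition $M\cup_A N$, and moves between normal forms are generated by the relations already imposed in $\Sk(M)\otimes_{\Sk(A)}\Sk(N)$.

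The main obstacle I expect is \textbf{faithfulness}: showing that if two diagrams in $\Sk(M\sqcup_A N)$, each decomposed relative to the gluing, become equal in the glued skein category, then their preimages are already identified in the relative tensor product. This requires a careful isotopy-extension and transversality argument to arrange that every isotopy and every instance of the local skein relation witnessing their equality can be chosen compatibly with the decomposition — never straddling the gluing locus in an uncontrolled way — so that it descends to a composite of the Tambara relations (\textbf{Naturality}, \textbf{Pentagon}, \textbf{Triangle}, plus functoriality). In effect one must prove a ``cutting along $C\times[0,1]$ respects skein equivalence'' lemma, and the delicate point is handling a skein cube that meets the collar: one has to further isotope it, possibly subdividing, so that it lies in $M$, in $N$, or in $A$ alone, which needs the thickened-embedding data (the room provided by the $(-\epsilon,1]$ collar and the isotopy $\lambda$) in an essential way. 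I would isolate this as a separate topological lemma and prove it by induction on the number of transverse intersection points of the diagram with the gluing circles.
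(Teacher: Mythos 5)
Your proposal follows essentially the same route as the paper: the explicit construction of $F$ on objects and on Tambara's generating morphisms (with $F(\iota_{m,a,n})$ built from the $r_{\lambda,-}$, $r_{l,-}$, $r_{r,-}$ tangles), verification of the Tambara relations via ambient-isotopy coherence, essential surjectivity by pushing points off the gluing locus, and fullness by cutting a generic-position ribbon diagram at the finitely many levels where it crosses the middle region. You also correctly identify faithfulness as the delicate step. The one organizational difference: for faithfulness, rather than inducting on intersection points, the paper explicitly enumerates a generating set of four local moves between decompositions adapted to the gluing (equivalence supported in the $M$- or $N$-piece, equivalence supported in the middle collar, commuting a crossing piece past a disjoint morphism, and merging two adjacent crossing pieces), and checks directly that the candidate inverse $u \mapsto w$ is invariant under each move using the pentagon, naturality of $\iota$ and of $\rho$, and functoriality in $\Sk(M)\otimes_{\Sk(A)}\Sk(N)$. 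This enumeration replaces the ``localize the skein cube and induct'' lemma you sketch; both are ways of formalizing the same transversality argument, but the explicit move list lets the paper verify everything by commutative diagrams in the relative tensor product without a separate cutting lemma.
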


Before proceeding to the proof of the theorem, we shall define the ribbon tangles \(\rho_{m, a, b} \in \Sk(M)\) and \(\rho_{a, b, n} \in \Sk(N)\) and prove a few identities about them. These will be needed in the proof that \(F\) is full and faithful.

\begin{defn}
Let \(m \in \Sk(M)\) and \(a, b \in \Sk(A)\) such that the points in \(a\) are disjoint from the points in \(b\).
We define the \emph{ribbon tangle \(\rho_{m, a, b} \in \Sk(M)\)} to be
\[\rho_{m, a, b} := r_{\lambda_M, m \lhd a} \sqcup \Id_{\emptyset \lhd b} : m \lhd (a \sqcup b) \to (m \lhd a) \lhd b.\]
Let \(n \in \Sk(N)\). We define the \emph{ribbon tangle \(\rho_{a, b, n} \in \Sk(N)\)} to be
\[\rho_{a, b, n} := r_{\lambda_N, b \rhd n} \sqcup \Id_{a \rhd \emptyset}: (a \sqcup b) \rhd n \to a \rhd (b \rhd n).\]
\end{defn}

\begin{lem}
For any \(m \in \Sk(M)\), \(n \in \Sk(N)\) and \(a, b \in \Sk(A)\) such that the points in \(a\) are disjoint from the points in \(b\), we have the identities:
\begin{align*}
    \rho_{m, a, b} &= \beta^{-1}_{m, a, b} \circ \left(\Id_m \lhd (r_{l, a} \sqcup r_{r, b}) \right) \\
    \rho_{a, b, n} &= \beta^{-1}_{a, b, n} \circ \left((r_{l, a} \sqcup r_{r, b}) \rhd \Id_n \right).
\end{align*}
\begin{proof}
\begin{align*}
    &\beta^{-1}_{m, a, b} \circ \left( \Id_m \lhd (r_{l, a} \sqcup r_{r, b}) \right) \\
    \quad&= \left( r_{\lambda_M, m \lhd \emptyset} 
\sqcup 
\left(r_{\lambda_M, \emptyset \lhd a} \circ r_{l^{-1}, \emptyset \lhd (a \ast \emptyset)}
\right) \sqcup 
r_{r^{-1}, \emptyset \lhd (\emptyset \ast b)} \right) \circ  \left( \Id_m \lhd (r_{l, a} \sqcup r_{r, b}) \right) \\
\quad&= r_{\lambda_M, m \lhd \emptyset} \sqcup \left( r_{\lambda_M, \emptyset \lhd a}  \circ r_{l^{-1}, \emptyset \lhd (a \ast \emptyset)} \circ r_{l, \emptyset \lhd a } \right) \sqcup \left(
r_{r^{-1}, \emptyset \lhd (\emptyset \ast b)} \circ r_{r, \emptyset \lhd b} \right) \\
\quad&= r_{\lambda_M, m \lhd a}  \sqcup \Id_{\emptyset \lhd b} \\
\quad&= \rho_{a, b, n}.
\end{align*}
The other identity is analogous.
\end{proof}
\end{lem}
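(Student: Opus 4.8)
The plan is to prove the first identity by rewriting both sides as the same explicit ribbon tangle in \(\Sk(M)\); the second identity is then immediate by the left--right symmetry exchanging \((M,\lhd,l)\) with \((N,\rhd,r)\) and swapping the roles of the two arguments.

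First I would record the explicit form of \(\beta^{-1}_{m,a,b}\). By \cref{defn:inducedmodule} the associator \(\beta_{m,a,b}\) is a disjoint union of three ribbon tangles living in pairwise disjoint sub-surfaces of \(M\): one swept out by the isotopy \(\lambda_M\) on the \(m\)-strands, one built from \(\lambda_M\) together with the retraction \(l\) on the \(a\)-strands, and one coming from the retraction \(r\) on the \(b\)-strands. Disjointness here is exactly where the defining property that \(\im(E_M)\) is disjoint from the image of the collar embedding of \(C\times[0,1]\) into \(M\), and the hypothesis that the points of \(a\) are disjoint from those of \(b\), get used. Consequently \(\beta^{-1}_{m,a,b}\) is the disjoint union of the three inverses of these pieces, which I would write out.

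Next I would post-compose with \(\Id_m\lhd(r_{l,a}\sqcup r_{r,b})\) and invoke \cref{rmk:naturalityofiso}: ribbon tangles supported in disjoint sub-surfaces commute, so a composite of such tangles may be computed region by region. Thus \(\Id_m\) is absorbed into the first component, \(r_{l,a}\) into the second, and \(r_{r,b}\) into the third; the second component becomes \(r_{\lambda_M,\emptyset\lhd a}\circ r_{l^{-1},\emptyset\lhd(a\ast\emptyset)}\circ r_{l,\emptyset\lhd a}\) and the third \(r_{r^{-1},\emptyset\lhd(\emptyset\ast b)}\circ r_{r,b}\). Two elementary properties of the trace construction \(r_{\sigma,-}\) then close the argument: it is functorial under concatenation of isotopies, so an isotopy concatenated with its reverse traces out a ribbon tangle isotopic to the identity (each strand sweeps out a null-homotopic loop), which collapses the second component to \(r_{\lambda_M,\emptyset\lhd a}\) and the third to \(\Id_{\emptyset\lhd b}\); and for an isotopy acting on a disjoint union of point configurations the swept-out tangle is the disjoint union of the swept-out tangles, so \(r_{\lambda_M,m\lhd\emptyset}\sqcup r_{\lambda_M,\emptyset\lhd a}=r_{\lambda_M,m\lhd a}\). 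What remains is \(r_{\lambda_M,m\lhd a}\sqcup\Id_{\emptyset\lhd b}\), which is \(\rho_{m,a,b}\) by definition.

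The hard part is not any single computation but making the disjoint-support bookkeeping precise: one must check that the three regions defining \(\beta_{m,a,b}\) are genuinely disjoint and that \(\Id_m\), \(r_{l,a}\), \(r_{r,b}\) are each supported in the matching region, so that the region-by-region composition is legitimate, and one must pin down the two trace identities above against the precise conventions of \cref{rmk:naturalityofiso}. With those granted, the first identity follows by substitution and cancellation exactly as above, and the second by symmetry.
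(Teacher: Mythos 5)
Your proposal is correct and follows essentially the same route as the paper: both unpack \(\beta^{-1}_{m,a,b}\) into its disjoint-union form from \cref{defn:inducedmodule}, distribute the composition with \(\Id_m\lhd(r_{l,a}\sqcup r_{r,b})\) component by component using disjointness of supports, and cancel the trace tangle \(r_{l^{-1},\cdot}\circ r_{l,\cdot}\) (and likewise \(r_{r^{-1},\cdot}\circ r_{r,\cdot}\)) to arrive at \(r_{\lambda_M,m\lhd a}\sqcup\Id_{\emptyset\lhd b}=\rho_{m,a,b}\), with the second identity by symmetry. Your added emphasis on why the three regions are genuinely disjoint (via the thickened-embedding conditions) is a useful remark but not a different argument.
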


\begin{lem}
For any \(m \in \Sk(M)\), \(n \in \Sk(N)\) and \(a, b \in \Sk(A)\) such that the points in \(a\) are disjoint from the points in \(b\), the following diagram commutes.
\[
\begin{tikzcd}[column sep=small]
    & (m \lhd a, b \rhd n) 
        \ar[rd, "\iota_{m, a, b \rhd n}"]
    &
    \\
((m \lhd a) \lhd b, n)
        \ar[ur, "\iota_{m \lhd a, b, n}"]
    &
    & (m, a \rhd (b \rhd n))
    \\
    &
    &
    \\
(m \lhd (a \sqcup b), n) 
        \ar[rr, "\iota_{m, a \sqcup b, n}"]
        \ar[uu, "{(\rho_{m, a, b}, \Id_n)}"]
    &
    & (m, (a \sqcup b) \rhd n)
        \ar[uu, "{(\Id_m, \rho_{a, b, n})}"']
\end{tikzcd}
\]
We shall refer to this diagram as the pentagon.
\begin{proof}
\[
\begin{tikzcd}[column sep=small]
    & (m \lhd a, b \rhd n) 
        \ar[rd, "\iota_{m, a, b \rhd n}"]
    &
    \\
((m \lhd a) \lhd b, n)
        \ar[ur, "\iota_{m \lhd a, b, n}"]
    & \text{pentagon of } \beta
    & (m, a \rhd (b \rhd n))
    \\
(m \lhd (a \ast b), n)
        \ar[u, "{(\beta^{-1}_{m, a, b}, \Id_n)}"]
        \ar[rr, "\iota_{m, a \ast b, n}"]
    &
    & (m, (a \ast b) \rhd n)
        \ar[u, "{(\Id_m, \beta^{-1}_{a, b, n})}"']
    \\
    & \text{naturality of }\iota
    &
    \\
(m \lhd (a \sqcup b), n) 
        \ar[rr, "\iota_{m, a \sqcup b, n}"]
        \ar[uu, "{(\Id_m \lhd (r_{l, a} \sqcup r_{r, b} ), \Id_n)}"]
    &
    & (m, (a \sqcup b) \rhd n)
        \ar[uu, "{(\Id_m, (r_{l, a}, r_{r, b}) \rhd \Id_n)}"']
\end{tikzcd}
\]
\end{proof}
\end{lem}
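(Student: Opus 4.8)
The plan is to derive this pentagon purely formally from the two defining axioms of Tambara's construction in \cref{defn:reltensorproduct} --- the \textbf{Pentagon} relation for the associators \(\beta\) and the \textbf{Naturality} relation for the balancing isomorphisms \(\iota\) --- using the two identities of the preceding lemma to bridge between the object \(a \sqcup b\), two disjoint framed-point configurations inside one copy of \(A\), and the monoidal product \(a \ast b\) in \(\Sk(A)\).

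Concretely, I would enlarge the diagram by adjoining the row
\[
(m \lhd (a \ast b), n) \xrightarrow{\ \iota_{m, a \ast b, n}\ } (m, (a \ast b) \rhd n),
\]
and use the preceding lemma to factor the two vertical maps of the pentagon as
\[
(\rho_{m, a, b}, \Id_n) = (\beta^{-1}_{m, a, b}, \Id_n) \circ \big(\Id_m \lhd (r_{l, a} \sqcup r_{r, b}),\ \Id_n\big)
\]
and
\[
(\Id_m, \rho_{a, b, n}) = (\Id_m, \beta^{-1}_{a, b, n}) \circ \big(\Id_m,\ (r_{l, a} \sqcup r_{r, b}) \rhd \Id_n\big).
\]
This cuts the enlarged diagram along the new row into two squares. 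The upper square --- with vertices \((m \lhd (a\ast b), n)\), \(((m\lhd a)\lhd b, n)\), \((m\lhd a, b\rhd n)\), \((m, a\rhd(b\rhd n))\), \((m, (a\ast b)\rhd n)\) --- is exactly the \textbf{Pentagon} axiom of \cref{defn:reltensorproduct}, read after pre-composing that axiom with \((\beta^{-1}_{m,a,b},\Id_n)\) on the right and \((\Id_m,\beta^{-1}_{a,b,n})\) on the left so as to cancel the associators against their inverses. The lower square is the \textbf{Naturality} axiom \(\iota_{m',a',n'}\circ(f\lhd u, g) = (f, u\rhd g)\circ\iota_{m,a,n}\) instantiated at \(f = \Id_m\), \(g = \Id_n\), and \(u = r_{l,a}\sqcup r_{r,b}\colon a\sqcup b \to a\ast b\) in \(\Sk(A)\). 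Pasting the two squares and re-collapsing the factorisations by the preceding lemma yields the pentagon.

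Since the argument is in the end a diagram chase inside the generators-and-relations presentation of \(\Sk(M)\otimes_{\Sk(A)}\Sk(N)\), the only real obstacle is bookkeeping: one must (i) keep straight the distinction between \(a\sqcup b\) and the monoidal product \(a\ast b\), noting that \((a\ast\emptyset)\sqcup(\emptyset\ast b)=a\ast b\) and that \(r_{l,a}\sqcup r_{r,b}\) is a well-defined morphism \(a\sqcup b\to a\ast b\) in \(\Sk(A)\) --- this is where the hypothesis that the points of \(a\) and \(b\) are disjoint is used, exactly as in the preceding lemma, to guarantee that the two retraction ribbon tangles of \cref{rmk:canonicalmonoidal} and \cref{rmk:naturalityofiso} have disjoint support; and (ii) match the directions of the arrows, since the Tambara \textbf{Pentagon} axiom is phrased with \(\beta\) whereas our pentagon involves \(\beta^{-1}\), so the axiom must be pre- and post-composed with the inverse associators before it can be read off as the upper square. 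Once these points are settled, no topology beyond the preceding lemma enters: the ``pentagon of \(\beta\)'' and ``naturality of \(\iota\)'' steps are literal instances of the relations of \cref{defn:reltensorproduct}.
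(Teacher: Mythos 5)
Your proposal is correct and reproduces the paper's own proof essentially verbatim: insert the middle row through \((m \lhd (a \ast b), n) \to (m, (a \ast b) \rhd n)\), factor the two verticals via the preceding lemma, and read the upper square as the Tambara \textbf{Pentagon} axiom (conjugated by inverse associators) and the lower square as the \textbf{Naturality} axiom for \(\iota\) at \(u = r_{l,a} \sqcup r_{r,b}\). No further comment is needed.
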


\begin{lem}
Let \(f: m \to m'\) be a morphism in \(\Sk(M)\), \(g: a \to a'\) be a morphism in \(\Sk(M)\) which is disjoint from \(\Id_b\) and \(h: b \to b'\) be a morphism in \(\Sk(M)\) which is disjoint from \(\Id_a\).
The following diagrams commute:
\[
\begin{tikzcd}[column sep=5em]
    m \lhd (a \sqcup b) 
            \ar[r, "f \lhd (\Id_a \sqcup h)"]
            \ar[d, "\rho_{m, a, b}"]
        & m' \lhd (a \sqcup b') 
            \ar[d, "\rho_{m', a, b'}"]
    \\
    (m \lhd a) \lhd b
            \ar[r, "(f \lhd \Id_{a}) \lhd h"]
        & (m' \lhd a) \lhd b'
\end{tikzcd}
\]
\[
\begin{tikzcd}[column sep=5em]
    m \lhd (a \sqcup b) 
            \ar[r, "f \lhd (g \sqcup \Id_b)"]
            \ar[d, "\rho_{m, a, b}"]
        & m' \lhd (a' \sqcup b) 
            \ar[d, "\rho_{m', a, b'}"]
    \\
    (m \lhd a) \lhd b
            \ar[r, "(f \lhd g) \lhd \Id_b"]
        & (m' \lhd a') \lhd b
\end{tikzcd}
\]
We have a similar result for the \(\rho\) in \(\Sk(N)\). We shall refer to this as the naturality of \(\rho\).
\begin{proof}
This follows from the similar naturality of \(r_{\lambda_M}\) and \(r_{\lambda_N}\).
\end{proof}
\end{lem}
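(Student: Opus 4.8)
The plan is to peel back the definitions of $\rho$ and of the module action until each square becomes a single naturality square for the ribbon tangle $r_{\lambda_M}$, which is exactly what is recorded in \cref{rmk:naturalityofiso}.

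First I would recall that $\lhd \colon \Sk(M) \times \Sk(A) \to \Sk(M)$ is induced by the surface embedding sending $m \sqcup a$ to $E_M(m) \sqcup \Phi_M(a)$, so that on morphisms $f' \lhd g' = E_M(f') \sqcup \Phi_M(g')$; this disjoint union is legitimate because $\im(E_M)$ and $\im(\Phi_M)$ are disjoint. Since $\Phi_M$ is an embedding and the point sets $a$ and $b$ occupy disjoint regions of $A$ (this is precisely the content of the disjointness hypotheses on $g$ and $h$ relative to $\Id_b$ and $\Id_a$), the induced functor $\Phi_M$ carries $\Id_a \sqcup h$ to $\Id_{\Phi_M(a)} \sqcup \Phi_M(h)$ and $g \sqcup \Id_b$ to $\Phi_M(g) \sqcup \Id_{\Phi_M(b)}$. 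Regrouping the disjoint factors I obtain
\[
f \lhd (\Id_a \sqcup h) = (f \lhd \Id_a) \sqcup \Phi_M(h), \qquad f \lhd (g \sqcup \Id_b) = (f \lhd g) \sqcup \Id_{\Phi_M(b)}
\]
as ribbon tangles in $\Sk(M)$, where $f \lhd \Id_a \colon m \lhd a \to m' \lhd a$ and $f \lhd g \colon m \lhd a \to m' \lhd a'$; likewise $(f \lhd \Id_a) \lhd h = E_M(f \lhd \Id_a) \sqcup \Phi_M(h)$ and $(f \lhd g) \lhd \Id_b = E_M(f \lhd g) \sqcup \Id_{\Phi_M(b)}$, while by definition $\rho_{m,a,b} = r_{\lambda_M, m \lhd a} \sqcup \Id_{\Phi_M(b)}$ (and similarly with the primed objects).

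Next I would compute the two composites around each square, using that disjoint union of ribbon tangles composes coordinatewise and that $E_M$ preserves composition. For the first square both ways round yield a ribbon tangle of the form $\varphi \sqcup \Phi_M(h)$, with $\varphi = r_{\lambda_M, m' \lhd a} \circ (f \lhd \Id_a)$ one way and $\varphi = E_M(f \lhd \Id_a) \circ r_{\lambda_M, m \lhd a}$ the other; for the second square I get $\varphi \sqcup \Id_{\Phi_M(b)}$ with $\varphi = r_{\lambda_M, m' \lhd a'} \circ (f \lhd g)$ versus $\varphi = E_M(f \lhd g) \circ r_{\lambda_M, m \lhd a}$. In each case the two expressions for $\varphi$ coincide by the naturality of $r_{\lambda_M}$, i.e.\ the identity $r_{\lambda_M, q'} \circ \psi = E_M(\psi) \circ r_{\lambda_M, q}$ for a ribbon tangle $\psi \colon q \to q'$ from \cref{rmk:naturalityofiso}, applied to $\psi = f \lhd \Id_a$ (respectively $\psi = f \lhd g$). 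Hence both squares commute. The statement for $\rho$ in $\Sk(N)$ then follows by the mirror-image computation, since $\rho_{a,b,n} = r_{\lambda_N, b \rhd n} \sqcup \Id_{a \rhd \emptyset}$ is assembled from $r_{\lambda_N}$ in the symmetric way and $r_{\lambda_N}$ is natural for the same reason.

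I expect no genuine obstacle here; the only point needing care is the bookkeeping of the disjoint-union regroupings above, which is precisely where the hypotheses that $a$ and $b$ (and the images of $E_M$ and $\Phi_M$) sit in disjoint regions get used — they are what make ``disjoint union of morphisms'' well defined and functorial, so that the coordinatewise computation is valid.
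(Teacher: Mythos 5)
Your argument is correct and is precisely the paper's proof written out in full: the paper disposes of the lemma in one line by appealing to the naturality of $r_{\lambda_M}$ and $r_{\lambda_N}$ (the identity $r_{\lambda,q'}\circ\psi = E(\psi)\circ r_{\lambda,q}$ from the remark on naturality of isotopies), and your computation simply unpacks $\rho$ and the action $\lhd$ to reduce each square to exactly that identity. The bookkeeping of disjoint unions and the role of the disjointness hypotheses are handled correctly, so nothing further is needed.
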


We now proceed to the proof of excision.

\begin{proof}[Proof of \cref{thm:skeinexcision}]
We shall first define 
 \[
F: \Sk(M) \underset{\Sk(A)}{\otimes} \Sk(N) \to \Sk(M \sqcup_{A} N)
\]
and show this definition is well-defined, and then show that \(F\) is full, faithful and essentially surjective. 
\subsubsection*{Definition of \texorpdfstring{\(F\)}{F}}
\begin{description}
    \item[Objects:] Let \((m, n)\) be an object of \(\Sk(M) \otimes_{\Sk(A)} \Sk(N)\), so \(m\) is a finite set of disjoint framed directed coloured points in \(M\) and \(n\) is a finite set of disjoint framed directed coloured points in \(N\). We define
    \[F(m, n) :=  E_M(m) \sqcup E_N(n)\]
    which is a finite set of disjoint framed directed coloured points in \(M \sqcup_A N\), and thus is a object of \(\Sk(M \sqcup_A N)\).
    \item[Morphisms:] By the definition of the relative tensor product (\cref{defn:reltensorproduct}), the morphisms of  \(\Sk(M) \otimes_{\Sk(A)} \Sk(N)\) are generated by the morphisms
    \begin{enumerate}
        \item  \((f, g): (m, n) \to (m', n')\), where \(f \in \Hom_{\Sk(M)}(m, m')\) and \(g \in \Hom_{\Sk(N)}(n, n')\),
        \item \(\iota_{m, a, n} :(m \lhd a, n) \to (m, a \rhd n)\) for \((m, a, n) \in \Sk(M) \times \Sk(A) \times \Sk(N)\), and 
        \item \(\iota^{-1}_{m, a, n} :(m, a \rhd n) \to (m \lhd a, n)\) for \((m, a, n) \in \Sk(M) \times \Sk(A) \times \Sk(N)\),
    \end{enumerate}
    so to define \(F\) it suffices to define \(F\) for these morphisms: 
    \begin{enumerate}
        \item \(F(f,g) := E_M(f) \sqcup E_N(g) \in \Hom_{\Sk(M \sqcup_A N)}(E_M(m) \sqcup E_N(n), E_M(m') \sqcup E_N(n'))\) where \(E\) is the functor of categories induced by the embedding \(E\).
        \item \sloppy \(F(\iota_{m, a, n}) :=  r_{\lambda_M, E^2_M(m)}^{-1} \sqcup \left( r_{\lambda_N, a} \circ r^{-1}_{\lambda_M, E_M(a)} \right) \sqcup r_{\lambda_N, E(n)} \in \Hom_{\Sk(M \sqcup_A N)}(E^2_M(m)  \sqcup E_M(a) \sqcup E_N(n), E_M(m) \sqcup E_N(a) \sqcup E_N^2(n))\) 
        \item \(F(\iota^{-1}_{m, a, n}) :=  r_{\lambda_M, E_M(m)} \sqcup \left( r_{\lambda_M, a} \circ r^{-1}_{\lambda_N, E_N(a)} \right) \sqcup r^{-1}_{\lambda_N, E_N(n)} \in \Hom_{\Sk(M \sqcup_A N)}(E_M(m) \sqcup E_N(a) \sqcup E_N^2(n), E_M^2(m) \sqcup E_M(a) \sqcup E_N(n))\) 
    \end{enumerate}
\end{description}
\begin{figure}[H]
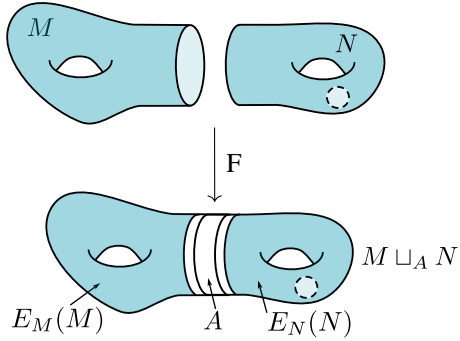

\floatbox[{\capbeside\thisfloatsetup{capbesideposition={right,center},capbesidewidth=5.5cm}}]{figure}[\FBwidth]
{\caption{This embedding of surfaces induces a functor \(\Sk(M) \times \Sk(N) \to \Sk(M \sqcup_A N)\) 
of their skein categories. 
The functor \(F\) on \(P\left( \Sk(M) \times \Sk(N) \right)\) is given by this functor: that is on objects and on morphisms of the form \((f, g)\).
}}
{\myincludesvg{width=6cm}{}{FonProd}}
\end{figure}

\begin{figure}[H]
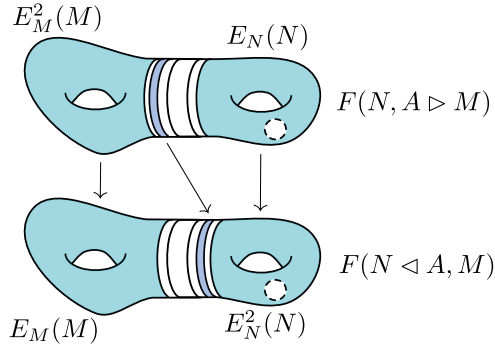

\floatbox[{\capbeside\thisfloatsetup{capbesideposition={right,center},capbesidewidth=4.5cm}}]{figure}[\FBwidth]
{\caption{The functor \(F\) on the natural isomorphism \(\iota\) gives a ribbon which has stands which cross the middle section from \(F(\emptyset \lhd a, \emptyset)\) to \(F(\emptyset,  a \rhd \emptyset)\) (coloured red). Elsewhere applying \(F(\iota_{m, a, n})\) only moves points a little.
}}
{\myincludesvg{width=7cm}{}{mapiota}}
\label{fig:iota}
\end{figure}

In order to show that \(F\) is well-defined we must show \(F(morphism)\) still satisfies the relations in \cref{defn:reltensorproduct}. This is a sequence of straight forward calculations:
\begin{description}[style=unboxed,leftmargin=0cm]
\item[Linearity] Follows automatically as we have defined \(F\) to be \(k\)-linear.
\item[Functionality]  Follows from the functionality of the functors \(E_M\) and \(E_N\):
\begin{align*}
    F((f', g') \circ (f , g)) &= E_M(f' \circ f) \sqcup E_N(g' \circ g) \\
    &= \left( E_M(f') \circ E_M(f) \right) \sqcup \left( E_N(g') \circ E_N(g) \right) \\
    &= F((f' \circ f, g' \circ g)) \\
    F(\Id_{m, n}) &= (E_M(\Id_{m}), E_N(\Id_{n})) = (\Id_{E_M(m)}, \Id_{E_N(n)}) = \Id_{F(m, n)}
\end{align*}
\item[Isomorphism]  Follows directly from the definitions:
\begin{align*}
F(\iota_{m, a, n}) \circ F(\iota^{-1}_{m, a, n}) 
&= \left( r_{\lambda_M, E_M^2(m)}^{-1} 
\sqcup \left( r_{\lambda_N, a} \circ r^{-1}_{\lambda_M, E_M(a)} \right) 
\sqcup r_{\lambda_N, E(n)} \right) \\
&\quad \circ \left(  r_{\lambda_M, E_M(m)} 
\sqcup \left( r_{\lambda_M, a} \circ r^{-1}_{\lambda_N, E_N(a)} \right) 
\sqcup r^{-1}_{\lambda_N, E^2_N(n)} \right) \\
&= \Id_{E_M(m)} \sqcup \Id_{E_N(a)} \sqcup \Id_{E_N^2(n)} \\
&= \Id_{E(m) \sqcup E(a \rhd n)}
\end{align*}
and similarly for \( F(\iota^{-1}_{m, a, n}) \circ F(\iota_{m, a, n})\).
\item[Naturality] This follows from \cref{rmk:naturalityofiso}:
\begin{align*}
   F(\iota_{m', a', n'}) &\circ F(f \lhd g, h)\\
    &= \left( r_{\lambda_M, E^2_M(m')}^{-1}  \sqcup \left( r_{\lambda_N, a'} \circ r^{-1}_{\lambda_M, E_M(a')} \right) \sqcup r_{\lambda_N, E_N(n')} \right) \\
    &\quad\circ \left( E_M^2(f) \sqcup E_M(g) \sqcup E_N(h) \right) \\
   &= \left( r_{\lambda_M, E^2_M(m')}^{-1} \circ E^2_M(f) \right) 
    \sqcup \left( r_{\lambda_N, a'} \circ r^{-1}_{\lambda_M, E_M(a')} \circ E_M(g) \right) \\
    &\quad\sqcup \left(r_{\lambda_N, E(n')} \circ E(h) \right) \\
    &=  \left( E_M(f) \circ r_{\lambda_M, E^2_M(m)}^{-1} \right) 
    \sqcup \left(E_N(g) \circ r_{\lambda_N, a} \circ r^{-1}_{\lambda_M, E_M(a)} \right) \\
    &\quad\sqcup \left(E^2(h) \circ r_{\lambda_N, E(n)} \right) \\
   &= F(f, g \rhd h) \circ F(\iota_{m, a, n})
\end{align*}
\item[Triangle]
Follows from the definitions:
\begin{align*}
    F(\Id_m, \eta_n) \circ F(\iota_{m, \emptyset, n}) 
    &= \left(\Id_{E(m)} \sqcup r^{-1}_{\lambda_N, E_N(n)} \right) 
    \circ \left(r^{-1}_{\lambda_M, E^2_M(m)} \sqcup r_{\lambda_N, E_N(n)} \right) \\
    &= r^{-1}_{\lambda_M, E^2(m)} \sqcup \Id_{E_N(n)} \\
    &= F(\theta_{m \lhd \emptyset} ,\Id_n)
\end{align*}
\item[Pentagon]
As
\begin{align*}
\beta_{a, b, n} &= r_{\lambda_N^{-1}, \emptyset \rhd (\emptyset \rhd n) } \sqcup \left( r_{r, b \rhd \emptyset} \circ r_{\lambda_N^{-1}, \emptyset \rhd (b \rhd \emptyset)} \right) \sqcup r_{l, a \rhd \emptyset} \\
\beta^{-1}_{m, a, b} &=  r_{\lambda_M, m \lhd \emptyset} 
\sqcup 
\left(r_{\lambda_M, \emptyset \lhd a} \circ r_{l^{-1}, \emptyset \lhd (a \ast \emptyset)}
\right) 
\sqcup 
r_{r^{-1}, \emptyset \lhd (\emptyset \ast b)}
\end{align*}
we have that 
\begin{align*}
F((\beta^{-1}_{m, a, b}, \Id_n) &=  
r_{\lambda_M, E_M^2(m)} 
\sqcup 
\left(r_{\lambda_M, E_M (a)} \circ r_{l^{-1}, E_M (a \ast \emptyset)}
\right) 
\sqcup 
r_{r^{-1}, E_M(\emptyset \ast b)}
\sqcup
\Id_{E_N(n)} \\
F(\Id_m, \beta_{a, b, n}) &= 
\Id_{E_M(m)}
\sqcup
r_{\lambda_N^{-1}, E_N^3(n) } 
\sqcup 
\left( r_{r, E_N(b)} \circ r_{\lambda_N^{-1}, E_N^2(b)} \right) 
\sqcup
r_{l, E_N(a)}
\end{align*}
So,
\begin{align*}
   &F(\Id_m, \beta_{a, b, n}) \circ F(\iota_{m, a, b \rhd n}) \circ F(\iota_{m \lhd a, b, n}) \circ F(\beta^{-1}_{m, a, b}, \Id_n) \\
   &\quad=
    \left( \Id_{E_M(m)}
        \sqcup r_{\lambda_N^{-1}, E_N^3(n) } 
        \sqcup \left( r_{r, E_N(b)} \circ r_{\lambda_N^{-1}, E_N^2(b)} \right) 
        \sqcup r_{l, E_N(a)} \right) \\
&\quad \quad \circ 
    \left( r_{\lambda_M^{-1}, E^2_M(m)} 
        \sqcup \left( r_{\lambda_N, a} \circ r_{\lambda_M^{-1}, E_M(a)} \right) 
        \sqcup r_{\lambda_N, E(b \rhd n)} \right)\\
&\quad \quad \circ  
    \left( r_{\lambda_M^{-1}, E^2_M(m \lhd a)} 
        \sqcup \left( r_{\lambda_N, b} \circ r_{\lambda_M^{-1}, E_M(b)} \right) 
        \sqcup r_{\lambda_N, E(n)} \right) \\
&\quad \quad \circ  
    \left( r_{\lambda_M, E_M^2(m)} 
        \sqcup \left(r_{\lambda_M, E_M (a)} \circ r_{l^{-1}, E_M (a \ast \emptyset)} \right) 
        \sqcup r_{r^{-1}, E_M(\emptyset \ast b)}
        \sqcup \Id_{E_N(n)} \right) \\
&\quad =  
    \left( \Id_{E_M(m)} \circ r_{\lambda_M^{-1}, E^2_M(m)} \circ r_{\lambda_M^{-1}, E^3_M(m)} \circ r_{\lambda_M, E_M^2(m)} \right) \\
&\quad \quad \sqcup \left(r_{l, E_N(a)} \circ r_{\lambda_N, a} \circ r_{\lambda_M^{-1}, E_M(a)} \circ r_{\lambda_M^{-1}, E^2_M(a)}  \circ r_{\lambda_M, E_M (a)} \circ r_{l^{-1}, E_M (a \ast \emptyset)} \right) \\
&\quad \quad \sqcup \left( r_{r, E_N(b)} \circ r_{\lambda_N^{-1}, E_N^2(b)} \circ r_{\lambda_N, E(b)} \circ  r_{\lambda_N, b} \circ r_{\lambda_M^{-1}, E_M(b)} \circ r_{r^{-1}, E_M(\emptyset \ast b)} \right) \\
&\quad \quad \sqcup \left(  r_{\lambda_N^{-1}, E_N^3(n) }  \circ r_{\lambda_N, E^2(n)} \circ r_{\lambda_N, E(n)} \circ \Id_{E_N(n)} \right) \\
&\quad =  
    \left( r_{\lambda_M^{-1}, E^2_M(m)} \right) \sqcup \left(r_{l, E_N(a)} \circ r_{\lambda_N, a} \circ r_{\lambda_M^{-1}, E_M(a)} \circ r_{l^{-1}, E_M (a \ast \emptyset)} \right) \\
&\quad \quad \sqcup \left( r_{r, E_N(b)} \circ  r_{\lambda_N, b} \circ r_{\lambda_M^{-1}, E_M(b)} \circ r_{r^{-1}, E_M(\emptyset \ast b)} \right) \sqcup \left( r_{\lambda_N, E(n)}  \right) \\
&\quad =  
    \left( r_{\lambda_M^{-1}, E^2_M(m)} \right) \sqcup \left( r_{\lambda_N, a \ast \emptyset} \circ r_{\lambda_M^{-1}, E_M(a \ast \emptyset)} \right)  \sqcup \left( r_{\lambda_N, \emptyset \ast b} \circ r_{\lambda_M^{-1}, E_M(\emptyset \ast b)} \right) \\
    &\quad\quad\sqcup \left( r_{\lambda_N, E(n)}  \right) \\
&\quad = F(\iota_{m, a \ast b, n})
\end{align*}
\end{description}
\begin{rmk}
These identities have straightforward interpretations topologically, for example the pentagon identity holds as one can straighten strands. 
\end{rmk}

\subsubsection*{\texorpdfstring{\(F\)}{F} is essentially surjective}
Any point in \(E_M(M) \sqcup E_N(N) \subset M \sqcup_A N\) is in the image of \(F\). If point \(x^{(V, \epsilon)}\) is not in this region then there is a ribbon which translates \(x^{(V, \epsilon)}\) across the middle region to a point \(\tilde{x}^{(V, \epsilon)}\) which is in this region. Hence, every point in \(M \sqcup_A N\) is isomorphic to an point in the image of \(F\), and \(F\) is essentially surjective. 

\subsubsection*{\texorpdfstring{\(F\)}{F} is full}
Let \((m_1, n_1), (m_2, n_2)\)  be any objects in \(\Sk(M) \otimes_{\Sk(A)} \Sk(N)\) and let 
\[ 
[\overline{u}] \in \Hom_{\Sk(M \sqcup_A N)} \left( F(m_1, n_1), F(m_2, n_2) \right),
\] 
so \([\overline{u}]\) is the equivalence class of a ribbon diagram
\[
\overline{u}: E_M(m_1) \sqcup E_N(n_1) \to E_M(m_2) \sqcup E_N(n_2).
\]
In order to show \(F\) is full, we must show there is a morphism 
\[ w \in \Hom_{\Sk(M) \otimes_{\Sk(A)} \Sk(N)} \left( (m_1, n_1), (m_2, n_2) \right)
\] 
such that \(F(w) = u\) for some \(u\) equivalent to \(\overline{u}\).

We shall call \(\im( \Xi_M \cup \Xi_N) \times [0,1]\), the \emph{middle region}.  
Up to isotopy fixed outside this middle region, we may assume that \(\overline{u}\) intersects \(\im(\mu_M) \times [0,1]\) in a finite number of transverse strands. 
Let \(t_i \in [0,1]\) be the levels when \(u_{t_i}\) intersects \(\im(\mu_M)\). 
By an isotopy in the \(t\)-coordinate which moves coupons, twists, minima, maxima, and strands not lying in \(F(M, N)\)\footnote{Being able to do this relies on the ribbon diagram \(u\) not starting or ending in the middle region.}, we may assume that \(u_{t_i}\) consists entirely of framed points in \(F(M, N)\).
Up to isotopy fixed in the middle region, we can further assume \(u_{t_i}\) contains framed points entirely in \(F(M \lhd A, A \rhd N)\). This means that \(u_{t_i} = \left(m \lhd \left(a \sqcup \overline{b} \right), \left(\overline{c} \sqcup d \right) \rhd n \right)\) where only \(\overline{b}\) and \(\overline{c}\) intersect \(\im(\mu_M) \sqcup \im(\mu_N)\).
We reparametrise further so that for some small \(\epsilon_i>0\), 
\(u_{[t_i-\epsilon_i, t_i + \epsilon]} = \Id_{m \lhd a, d \rhd n} \sqcup v_{[t_i-\epsilon_i, t_i + \epsilon]}\) where \(v_{[t-\epsilon, t+\epsilon]}: E_M(b) \sqcup E_N(c) \to E_M(b') \sqcup E_N(c')\): in other words \(u_{[t_i-\epsilon_i, t_i + \epsilon]}\) consists of identity strands and a ribbon tangle which straddles the middle region. 

\begin{figure}[H]
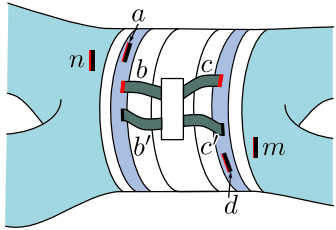

    \centering
    \myincludesvg{height=3cm}{}{mapv}
    \caption{An example of \(u_{[t_i-\epsilon_i, t_i + \epsilon]}\). In general \(a, b, b', \dots\) are not single framed points, but finite sets of framed points, and the coupon depicted could be any ribbon diagram in this square with the same inputs and outputs.}
    \label{fig:mapv}
\end{figure}

We now have a ribbon diagram \(u\) equivalent to \(\overline{u}\) with a decomposition
\[u = u_{[1, t_N + \epsilon_N]} 
    \circ u_{[t_N - \epsilon_N, t_N + \epsilon_N]} 
    \circ u_{[t_{N-1} + \epsilon_{N-1}, t_N - \epsilon_N]} 
    \circ \dots \circ u_{[t_1 - \epsilon_1, t_1 + \epsilon_1]} 
    \circ u_{[0, t_1 - \epsilon_1]}\]
such that 
\(u_{[t_i-\epsilon_i, t_i + \epsilon_i]} = \Id_{m_i \lhd a_i, d_i \rhd n_i} \sqcup v_{[t_i-\epsilon_i, t_i + \epsilon_i]}\) and the other morphisms in the decomposition lie in \(F(M, N) \times [0, 1]\). If a morphism lies in \(F(M, N) \times [0, 1]\) then it is of the form \(f \sqcup g\) for \(f \in F(M) \times [0, 1]\) and \(g \in F(N) \times [0, 1]\). In which case \(F(E^{-1}_{M}(f), E^{-1}_N(g)) = (f,g)\).
So it remains to consider the ribbon tangle
\begin{align*}
    &u_{[t-\epsilon, t+\epsilon]} : F(m \lhd (a \sqcup b),  (c \sqcup d) \rhd n) \to F(m \lhd (a \sqcup b'),  (c' \sqcup d) \rhd n) \\
    &u_{[t-\epsilon, t+\epsilon]} = v_{[t-\epsilon, t+\epsilon]} \sqcup \Id_{E^2_M(m) \sqcup E_M(a) \sqcup E_N(d) \sqcup E^2_N(n)}
\end{align*}
where \(v_{[t-\epsilon, t+\epsilon]}: E_M(b) \sqcup E_N(c) \to E_M(b') \sqcup E_N(c)\). As the middle region is homeomorphic to \(C \times [0, 1]\) and \([0, 1]\) is topologically trivial, there exists a ribbon tangle \(\overline{v}: b \rhd c \to b' \rhd c'\) in \(\Sk(M)\) such that 
\begin{align*}
v_{[t-\epsilon, t+\epsilon]} 
&= \left( \left( r_{\lambda_M, b'} \circ r^{-1}_{\lambda_N, E_N(b')} \right) \sqcup r_{\lambda_N, E_N(c')}\right) 
\circ E_N(\overline{v}) \\ 
&\quad\circ \left( \left( r_{\lambda_N, b} \circ r^{-1}_{\lambda_M, E_M(b)} \right) \sqcup r_{\lambda^{-1}_N, E^2_N(c)} \right).
\end{align*}

\begin{figure}[H]
\floatbox%
[{\capbeside\thisfloatsetup{capbesideposition={right,center},capbesidewidth=8cm}}]%
{figure}[\FBwidth]%
{%
\captionsetup{singlelinecheck=off}
\protect{%
\caption%
[The top figure is an example of \(v_{[t-\epsilon, t+\epsilon]}\). The bottom figure is isotopic and depicts the decomposition of \(v_{[t-\epsilon, t+\epsilon]}\).]%
{%
The top figure is an example of \(v_{[t-\epsilon, t+\epsilon]}\). The bottom figure is isotopic and depicts the decomposition of \(v_{[t-\epsilon, t+\epsilon]}\): 
\begin{enumerate}
    \item The yellow ribbons are 
    \[\left( \left( r_{\lambda_N, b} \circ r^{-1}_{\lambda_M, E_M(b)} \right) \sqcup r_{\lambda^{-1}_N, E^2_N(c)} \right);\]
    \item The distorted copy of \(v_{[t-\epsilon, t+\epsilon]}\) is \(\overline{v}\);
    \item The blue ribbons are 
    \[
    \left( \left( r_{\lambda_M, b'} \circ r^{-1}_{\lambda_N, E_N(b')} \right) \sqcup r_{\lambda_N, E_N(c')}\right).
    \]
\end{enumerate}
}
}%
}
{\myincludesvg{width=4cm}{}{vshift}}
\label{fig:vshift}
\end{figure}
We denote by \(w_{[t-\epsilon, t+\epsilon]}\) the following morphism in \(\Sk(M) \underset{\Sk(A)}{\otimes} \Sk(N)\):
\[
\begin{tikzcd}[cramped]
(m \lhd (a \sqcup b), (c \sqcup d) \rhd n) 
        \ar[d, "{(\rho_{m, a, b}, \Id)}"] \\
((m \lhd a) \lhd b, (c \sqcup d) \rhd n)
        \ar[d, "\iota_{m \lhd a, b, (c \sqcup d) \rhd n}"] \\
(m \lhd a, b \rhd ((c \sqcup d) \rhd n))
        \ar[d, "{(\Id_{m \lhd a}, \overline{v} \sqcup \Id_{\emptyset \rhd (d \rhd n)})}"] \\
(m \lhd a, b' \rhd ((c' \sqcup d) \rhd n))
        \ar[d, "\iota^{-1}_{m \lhd a, b', (c' \sqcup d) \rhd n}"] \\
((m \lhd a) \rhd b' ,(c' \sqcup d) \rhd n)
        \ar[d, "{(\rho^{-1}_{m, a, b'}, \Id)}"] \\
(m \lhd (a \sqcup b') ,(c' \sqcup d) \rhd n)
\end{tikzcd}
\]
We shall sometimes denote \(\hat{v} := \overline{v} \sqcup \Id_{\emptyset \rhd (d \rhd n)}\).
We claim that \(F(w_{[t-\epsilon, t+\epsilon]})= v_{[t-\epsilon, t+\epsilon]}\). By the functorality of \(F\) and the definition of \(F\) on the various components, we have that \(F(w_{[t-\epsilon, t+\epsilon]})\) is
\[
\begin{tikzcd}[cramped]
F(m \lhd (a \sqcup b), (c \sqcup d) \rhd n) 
        \ar[d, "{r_{\lambda_M, E_M(m \lhd a)} \sqcup \Id_{E_M(b)}  \sqcup \Id_{E_N(c)} \sqcup \Id_{E_N(d \rhd n)}}"] \\
F((m \lhd a) \lhd b, (c \sqcup d) \rhd n)
        \ar[d, "{r_{\lambda^{-1}_M, E_M(m \lhd a)} \sqcup \left( r_{\lambda_N, b} \circ r^{-1}_{\lambda_M, E_M(b)} \right) \sqcup r_{\lambda_N, E_N(c)} \sqcup r_{\lambda_N, E_N( d \rhd n)}}"] \\
F(m \lhd a, b \rhd ((c \sqcup d) \rhd n))
        \ar[d, "{\left(\Id_{E_M(m \lhd a)}, E_N(\overline{v}) \sqcup \Id_{E^2_N(d \rhd n)}\right)}"] \\
F(m \lhd a, b' \rhd ((c' \sqcup d) \rhd n))
        \ar[d, "r_{\lambda_M, E_M(m \lhd a)} \sqcup \left( r_{\lambda_M, b'} \circ r_{\lambda_N^{-1}, E_N(b')} \right) \sqcup r^{-1}_{\lambda_N, E_N(c')} \sqcup r^{-1}_{\lambda_N, E_N(d \rhd n)}"] \\
F((m \lhd a) \rhd b' ,(c' \sqcup d) \rhd n)
        \ar[d, "{r_{\lambda^{-1}_M, E^2_M(m \lhd a)} \sqcup \Id_{E_M(b)} \sqcup \Id_{E_N(c')} \sqcup \Id_{E_N(d \rhd n)}}"] \\
F(m \lhd (a \sqcup b') ,(c' \sqcup d) \rhd n)
\end{tikzcd}
\]
So it decomposes into three components: 
\begin{align*}
F(w_{[t-\epsilon, t+\epsilon]}) 
&= \Id_{E_M(m \lhd a)} \sqcup  \Id_{E_N(d \rhd n)} \\
&\quad\sqcup \left( \left( r_{\lambda_M, b'} \circ r^{-1}_{\lambda_N, E_N(b')} \right) \sqcup r_{\lambda_N, E_N(c')}\right) 
\circ E_N(\overline{v}) \\
&\quad\circ \left( \left( r_{\lambda_N, b} \circ r^{-1}_{\lambda_M, E_M(b)} \right) \sqcup r_{\lambda^{-1}_N, E^2_N(c')} \right) \\
&= \Id_{E_M(m \lhd a)} \sqcup  \Id_{E_N(d \rhd n)} \sqcup v \\
&= u_{[t-\epsilon, t+\epsilon]}.
\end{align*}
and we are done. 

\subsubsection*{\texorpdfstring{\(F\)}{F} is faithful}
In the previous section we have shown that for any ribbon tangle \(u\) there is a morphism \(w\) such that \(F(w) = u\). 
We shall now show that this defines a well defined inverse map of 
\begin{align*}
    F_{(m_1, n_1), (m_2, n_2) } &: \Hom_{\Sk(M) \otimes_{\Sk(A)} \Sk(N)} \left((m_1, n_1), (m_2, n_2) \right) \\
    &\;\to \Hom_{\Sk(M \sqcup_A N)} \left( F(m_1, n_1), F(m_2, n_2) \right).
\end{align*}
If the map \(u \mapsto w\) is well defined it is the inverse of \(F_{(m_1, n_1), (m_2, n_2) }\), because \(F(f, g) \mapsto (f,g)\) and \(F(\iota_{m, a, n}) \mapsto \iota_{m,a,n}\).

Any equivalence of ribbon diagrams in \(\Sk(M \sqcup_A N)\) can be decomposed into equivalences which are fixed outside of one open set in the open cover of \((M \sqcup_A N) \times [0, 1]\). In particular this means that any isotopy of
\[u = u_{[1, t_N + \epsilon_N]} \circ u_{[t_N - \epsilon_N, t_N + \epsilon_N]} \circ u_{[t_{N-1} + \epsilon_{N-1}, t_N - \epsilon_N]} \circ \dots \circ u_{[t_1 - \epsilon_1, t_1 + \epsilon_1] \circ u_{[0, t_1 - \epsilon_1]}}\]
consists of the composition of equivalence of the following forms:
\begin{enumerate}
    \item \textbf{Equivalence of a non-crossing morphism.} Let \(u_i := u_{[t + \epsilon,t - \epsilon]}\) be a non-crossing ribbon diagram, so \(u_i = f \sqcup g\) for \(f \in F(M) \times [0, 1]\) and \(g \in F(N) \times [0, 1]\). The equivalences \(f \sim f'\) and \(g \sim g'\) of the ribbon tangles in \( F(M) \times [0, 1]\) and \(F(N) \times [0, 1]\) respectively define an equivalence of \(u_i\) to another non-crossing ribbon diagram \(u'_i := f' \sqcup g'\).
    \item \textbf{Equivalence in the middle region}. 
    \begin{figure}[H]
\floatbox[{\capbeside\thisfloatsetup{capbesideposition={left,top},capbesidewidth=7cm}}]{figure}[\FBwidth]
{\caption*{Let \(u_i := u_{[t- \epsilon, t + \epsilon]}\) be a crossing ribbon diagram, so \(u_i = v_1 \sqcup \Id_{F(m \lhd a, d \rhd n)}\). 
    Suppose we have an equivalence of \(v_1\) in the middle region \(v_1 \sim (r \sqcup s) \circ v_2 \circ (p \sqcup q)\) where \(r, p \in F(\emptyset \lhd A) \times [0, 1]\) and \(s, q \in F(A \rhd \emptyset) \times [0, 1]\) are depicted in the figure opposite. This defines an equivalence of \(u_i\) to 
    \(
    \left(r \sqcup s \sqcup \Id_{F(m \lhd a, d \rhd n)} \right) \circ \left( v_2 \sqcup \Id_{F(m \lhd a, d \rhd n)} \right)\)
    \(\circ \left(p \sqcup q \sqcup \Id_{F(m \lhd a, d \rhd n)} \right).
    \)
}}
{\myincludesvg{width=4cm}{}{middlemove}}
\end{figure}
    \item \textbf{Commuting with a crossing.} Let \(u_i := u_{[s,t - \epsilon]}\)\footnote{We use \(s\) as this \(u_i\) may only be part of one of the ribbon diagrams in the decomposition of \(u\).} be a non-crossing ribbon diagram of the form \(u_i = g \sqcup h \sqcup \Id_{F(b, c)}\) where \(g \in F(M) \times [0, 1]\) and \(h \in F(N) \times [0, 1]\) and let \(u_{i+1} := u_{[t- \epsilon, t+ \epsilon]}\) be a crossing ribbon such that \(v: b \sqcup c \to b' \sqcup c'\). There is an equivalence:
    \[
    \left( v \sqcup \Id_{F(m \lhd x, y \rhd n)} \right) \circ \left( g \sqcup h \sqcup \Id_{F(b, c)} \right) \sim  \left( g \sqcup h \sqcup \Id_{F(b', c')} \right) \circ \left( v \sqcup \Id_{F(m \lhd a, b \rhd n)} \right)
    \]
    which commutes these ribbon diagrams up to some modification of the identity components. 
    \item \paragraph{Merging crossings.}
    Let \(u_i\) and \(u_{i+1}\) both be crossing ribbon diagrams\footnote{To simplify the proof slightly, we assume that there are no points in the left crossing region which are not moved by the crossing.}, so 
    \(u_i = f \sqcup \Id_{F(m \lhd a, d \rhd n)}\) and \(u_{i+1} = g \sqcup \Id_{n \lhd b'', c'' \rhd m}\) for \(f: b \sqcup c \to b' \sqcup c'\) and \(g: x \sqcup y \to x' \sqcup y'\) for \(x = a \sqcup (b' - b'')\)\footnote{\((b' - b'')\) denotes set difference} and \(y = d \sqcup (c' - c'')\) (see the figure below). Then the composition \(u_{i+1} \circ u_i\) is equivalent to the single crossing \(u' := v \sqcup \Id_{F(m, n)}\) where \(v = \left( g \sqcup \Id_{b'' \sqcup c''} \right) \circ \left(f \sqcup \Id_{a \sqcup d} \right)\).
\begin{figure}[h]
    \centering
    \myincludesvg{height=3cm}{}{combine}
    \label{fig:combine}
\end{figure}
\end{enumerate}

We shall now check that the map \(u \mapsto w\) is well defined by showing it is invariant under the equivalences listed above. 

\paragraph{Equivalence of a non-crossing morphism}
This is straightforward:
\(u_i := f \sqcup g \mapsto \left( E^{-1}_M(f), E^{-1}_N(g) \right)\) and \(u'_i := f' \sqcup g' \mapsto \left( E^{-1}_M(f'), E^{-1}_N(g') \right)\), but \(f \sim f'\) and \(g \sim g'\) implies \(E^{-1}_M(f) \sim E^{-1}_M(f')\) and \(E^{-1}_N(g) \sim E^{-1}_N(g')\), so these ribbon tangles map to the same morphism.

\paragraph{Equivalence of middle region}
\[
\begin{tikzcd}[column sep=-0em, row sep=0.5em]
(m \lhd (a \sqcup b), (c \sqcup d) \rhd n) 
        \ar[dd, "{(\rho_{m, a, b}, \Id)}"']
        \ar[rr, "{(\Id_m \lhd (\Id_a \sqcup p), (q \sqcup \Id_d) \rhd \Id_{n})}"]
    &
    & (m \lhd (a \sqcup x), (y \sqcup d) \rhd n)
        \ar[dd, "{(\rho_{m, a, x}, \Id)}"]
\\
    & \text{naturality of } \rho
    &
\\
((m \lhd a) \lhd b, (c \sqcup d) \rhd n)
        \ar[dd, "\iota_{m \lhd a, b, (c \sqcup d) \rhd n}"']
        \ar[rr, "{(\Id_{m \lhd a} \lhd p),  (q \sqcup \Id_d) \rhd \Id_{n})}"]
    &
    & ((m \lhd a) \lhd x, (y \sqcup d) \rhd n)
        \ar[dd, "\iota_{m \lhd a, x, (y \sqcup d) \rhd n}"]
\\
    & \text{naturality of } \iota
    &
\\
(m \lhd a, b \rhd ((c \sqcup d) \rhd n))
        \ar[dd, "{(\Id, \hat{u}_1)}"']
        \ar[rr, "{(\Id, p \rhd ((q \sqcup \Id_d) \rhd \Id_n))}"]
    &
    & (m \lhd a, x \rhd ((y \sqcup d) \rhd n))
        \ar[dd, "{(\Id, \hat{u}_2)}"]
\\
    & \text{definition of maps}
    &
\\
(m \lhd a, b' \rhd ((c' \sqcup d) \rhd n))
        \ar[dd, "\iota^{-1}_{m \lhd a, b', (c' \sqcup d) \rhd n}"']
        \ar[rr, "{(\Id_{m \lhd a, r \rhd ((s \sqcup \Id_d) \rhd \Id_n)})}"]
    &
    & (m \lhd a, x' \rhd ((y' \sqcup d) \rhd n))
        \ar[dd, "\iota^{-1}_{m \lhd a, x', (y' \sqcup d) \rhd n}"]
\\
    & \text{naturality of } \iota 
    &
\\
((m \lhd a) \lhd b', (c' \sqcup d) \rhd n)
        \ar[dd, "{(\rho^{-1}_{m, a, b'}, \Id)}"']
        \ar[rr, "{(\Id_{m \lhd a} \lhd r, (s \sqcup \Id_d) \rhd \Id_n)}"]
    &
    & ((m \lhd a) \lhd x', (y' \sqcup d) \rhd n)
        \ar[dd, "{(\rho^{-1}_{m, a, x'}, \Id)}"]
\\
    & \text{naturality of } \rho
    &
\\
(m \lhd (a \sqcup b'), (c' \sqcup d) \rhd n)
        \ar[rr, "{(\Id_m \lhd (\Id_a \sqcup r), (s \sqcup \Id_d) \rhd \Id_n)}"]
    &
    & (m \lhd (a \sqcup x'), (y' \sqcup d) \rhd n)
\end{tikzcd}
\]

\paragraph{Crossings commute with disjoint morphisms}
\[
\begin{tikzcd}[column sep=-3em, row sep=0.5em]
    (m \lhd (a \sqcup b), (c \sqcup d) \rhd n)) 
            \ar[rr, "{(g \sqcup \Id_{\emptyset \lhd b}, (\Id_{c \rhd \emptyset} \sqcup h))}"]
            \ar[dd, "{(\rho_{m, a, b}, \Id)}"']
        &
        & (m \lhd (x \sqcup b), (c \sqcup y) \rhd n)) 
            \ar[dd, "{(\rho_{m, x, b}, \Id)}"]
    \\
    & \text{naturality of } \rho
    &
    \\
    ((m \lhd a) \lhd b, (c \sqcup d) \rhd n)) 
            \ar[rr, "{(g \lhd \Id_b, \Id_{c \rhd \emptyset} \sqcup h)}"]
            \ar[dd, "\iota_{m \lhd a, b, (c \sqcup d) \rhd n}"']
        &
        & ((m \lhd x) \lhd b, (c \sqcup y) \rhd n)) 
            \ar[dd, "\iota_{m \lhd x, b, (c \sqcup y) \rhd n}"]
    \\
    & \text{naturality of } \iota
    &
    \\
    (m \lhd a, b \rhd ((c \sqcup d) \rhd n))
            \ar[rr, "{(g, \Id_b \rhd (\Id_{c \rhd \emptyset} \sqcup h))}"]
            \ar[dd, "{(\Id, \overline{v} \sqcup \Id_{\emptyset \rhd (d \rhd n)})}"']
        &
        & (m \lhd x, b \rhd ((c \sqcup y) \rhd n))
            \ar[dd, "{(\Id, \overline{v} \sqcup \Id_{\emptyset \rhd (y \rhd n)})}"]
    \\
    & \text{identity morphisms are central}
    &
    \\
    (m \lhd a, b' \rhd ((c' \sqcup d) \rhd n)) 
            \ar[rr, "{(g, \Id_{b'} \rhd (\Id_{c' \rhd \emptyset} \sqcup h))}"]
            \ar[dd, "\iota^{-1}_{m \lhd a, b', (c' \sqcup d) \rhd n}"']
        &
        & (m \lhd x, b' \rhd ((c' \sqcup y) \rhd n))
            \ar[dd, "\iota^{-1}_{m \lhd x, b', (c' \sqcup y) \rhd n}"]
    \\
    & \text{naturality of } \iota
    &
    \\
    ((m \lhd a) \lhd b', (c' \sqcup d) \rhd n)
            \ar[rr, "{(g \lhd \Id_{b'}, \Id_{c' \rhd \emptyset} \sqcup h)}"]
            \ar[dd, "{(\rho^{-1}_{m, a, b'}, \Id)}"']
        &
        & ((m \lhd x) \lhd b', (c' \sqcup y) \rhd n)
            \ar[dd, "{(\rho^{-1}_{m, x, b'}, \Id)}"]
    \\
    & \text{naturality of } \rho
    &
    \\
    (m \lhd (a \sqcup b'), (c' \sqcup d) \rhd n)
             \ar[rr, "{(g \sqcup \Id_{\emptyset \lhd b'}, (\Id_{c' \rhd \emptyset} \sqcup h))}"]
        &
        &(m \lhd (x \sqcup b'), (c' \sqcup y) \rhd n)
\end{tikzcd}
\]

\textbf{Merging Crossings}
\begin{figure}
\noindent\makebox[\textwidth]{
\begin{tikzcd}[row sep=3em, column sep=-1em, ampersand replacement=\&]
    (m \lhd (a \sqcup b), (c \sqcup d \sqcup e) \rhd n)
        \ar[rrr, "\iota_{m, a \sqcup b, (c \sqcup d \sqcup e \sqcup f) \rhd n}"]
        \ar[d, "{(\rho_{m, a, b}, \Id)}"]
    \&
    \&
    \& (m, (a \sqcup b) \rhd ( (c \sqcup d \sqcup e) \rhd n)) 
        \ar[d, "{(\Id, \rho_{a, b, (c \sqcup d \sqcup e) \rhd n})}"] 
        \\
    ((m \lhd a ) \lhd b, (c \sqcup d \sqcup e) \rhd n)
        \ar[rd, "\iota_{m \lhd a, b, (c \sqcup d \sqcup e) \rhd n}"']
    \&   \text{pentagon}
    \& 
    \& (m,  a \rhd ( b \rhd ( (c \sqcup d \sqcup e) \rhd n)))
        \ar[lld, "\iota^{-1}_{m, a, b \rhd ((c \sqcup d \sqcup e) \rhd n)}"]
        \ar[ddd, shift left=1.5em, "{(\Id_m, \Id_a \rhd \hat{f})}"{name=A}]
    \\
    \&  (m \lhd a , b \rhd ( (c \sqcup d \sqcup e) \rhd n))
        \ar[d, "{(\Id_m \lhd \Id_a, \hat{f})}"{name=B}]
        \ar[from=A, to=B, phantom, "\iota \text{ naturality}"]
    \&
    \&
    \\
    \& (m \lhd a , b' \rhd ( (c' \sqcup d \sqcup e) \rhd n))
        \ar[ld, "\iota^{-1}_{m \lhd a, b', (c' \sqcup d \sqcup e) \rhd n}"']
        \ar[rrd, "\iota_{m,  a, b' \rhd ( (c' \sqcup d \sqcup e) \rhd n)}"]
    \& 
    \&
    \\
    ((m \lhd a) \lhd b',  (c' \sqcup d \sqcup e) \rhd n)
        \ar[d, "{(\rho^{-1}_{m, a, b'} , \Id)}"]
    \&\text{pentagon}
    \& 
    \&(m , a \rhd (b' \rhd ( (c' \sqcup d \sqcup e) \rhd n)))
        \ar[d, "{(\Id, \rho^{-1}_{a, b', (c' \sqcup d \sqcup e) \rhd n})}"]
    \\
    (m \lhd (a \sqcup b')), (c' \sqcup d \sqcup e) \rhd n)
                \ar[rrr, "\iota_{m, a \sqcup b', (c' \sqcup d \sqcup e) \rhd n}"]
                \ar[dd, equal]
    \&
    \&
    \& (m, (a \sqcup b') \rhd (c' \sqcup d \sqcup e) \rhd n )
                \ar[dd, equal]
    \\[-1.5em]
    \& =
    \&
    \\[-1.5em]
    (m \lhd (b'' \sqcup x)), (y \sqcup c'' \sqcup e) \rhd n)
        \ar[rrr, "\iota_{m, b'' \sqcup x, (y \sqcup c'' \sqcup e) \rhd n}"]
        \ar[d, "{(\rho_{m, b'', x} , \Id)}"]
    \&
    \&
    \& (m, (b'' \sqcup x) \rhd (y \sqcup c'' \sqcup e) \rhd n )
        \ar[d, "{(\Id, \rho_{b'', x,  (y \sqcup c'' \sqcup e) \rhd n})}"]
    \\
    ((m \lhd b'') \lhd x, (y \sqcup c'' \sqcup e) \rhd n)
        \ar[rd, "\iota_{m \lhd b'', x, (y \sqcup c'' \sqcup e) \rhd n}"']
    \& \text{pentagon}
    \&
    \& (m, b'' \rhd (x \rhd (y \sqcup c'' \sqcup e) \rhd n ))
        \ar[lld, "\iota^{-1}_{m, b'', (y \sqcup c'' \sqcup e) \rhd n}"]
        \ar[ddd, shift left=1.5em, "{(\Id_{m}, \Id_{b''} \rhd \hat{g})}"{name=C}]
    \\
    \&(m \lhd b'', x \rhd ((y \sqcup c'' \sqcup e) \rhd n))
        \ar[d, "{(\Id_m \lhd \Id_{b''}, \hat{g})}"{name=D}]
        \ar[from=C, to=D, phantom, "\iota \text{ naturality}"]
    \&
    \&
    \\
    \&(m \lhd b'', x' \rhd ((y' \sqcup c'' \sqcup e) \rhd n))
        \ar[ld, "\iota^{-1}_{m \lhd b'', x', (y' \sqcup c'' \sqcup e) \rhd n}"']
        \ar[rrd, "\iota_{m, b'', x' \rhd ((y' \sqcup c'' \sqcup e) \rhd n)}"]
    \&
    \&
    \\
    ((m \lhd b'') \lhd x', (y' \sqcup c'' \sqcup e) \rhd n)
        \ar[d, "{(\rho^{-1}_{m, b'', x'}, \Id)}"]
    \& \text{pentagon}
    \&
    \& (m, b'' \rhd (x' \rhd ((y' \sqcup c'' \sqcup e) \rhd n)))
        \ar[d, "{(\Id, \rho^{-1}_{b'', x', (y' \sqcup c'' \sqcup e) \rhd n})}"]
    \\
    (m \lhd (b'' \sqcup x'), (y' \sqcup c'' \sqcup e) \rhd n)
    \&
    \&
    \& (m, (b'' \sqcup x') \rhd ((y' \sqcup c'' \sqcup e) \rhd n))
        \ar[lll, "\iota^{-1}_{m, b'' \sqcup x', (y' \sqcup c'' \sqcup e) \rhd n}"]
\end{tikzcd}}
\end{figure}
The composition of the morphism on the right of the diagram on the next page is \(u_{i+1} \circ u_i\). As this diagram is a commutative diagram: 
\begin{align*}
    u_{i+1} \circ u_i &=  \iota^{-1}_{m, b'' \sqcup x', (y' \sqcup c'' \sqcup e) \rhd n} \\
        &\quad \circ (\Id, \rho^{-1}_{b'', x', (y' \sqcup c'' \sqcup e) \rhd n}) \circ (\Id_{m}, \Id_{b''} \rhd \hat{g}) \circ (\Id, \rho_{b'', x,  (y \sqcup c'' \sqcup e) \rhd n}) \\
        &\quad \circ (\Id, \rho^{-1}_{a, b', (c' \sqcup d \sqcup e) \rhd n}) \circ (\Id_m \lhd \Id_a, \hat{f}) \circ (\Id, \rho_{a, b, (c \sqcup d \sqcup e) \rhd n}) \\
        &\quad \circ \iota_{m, a \sqcup b, (c \sqcup d \sqcup e \sqcup f) \rhd n} \\
        &= \iota^{-1}_{m, b'' \sqcup x', (y' \sqcup c'' \sqcup e) \rhd n} \\
        &\quad \circ (\Id, \Id_{b'' \rhd \emptyset} \sqcup \hat{g}) \circ (\Id, \rho^{-1}_{b'', x, (y \sqcup c'' \sqcup e) \rhd n}) \circ (\Id, \rho_{b'', x,  (y \sqcup c'' \sqcup e) \rhd n}) \\
        &\quad \circ (\Id, \Id_{a \rhd \emptyset} \sqcup \hat{f}) \circ (\Id, \rho^{-1}_{a, b, (c \sqcup d \sqcup e) \rhd n}) \circ (\Id, \rho_{a, b, (c \sqcup d \sqcup e) \rhd n}) \\
        &\quad \circ \iota_{m, a \sqcup b, (c \sqcup d \sqcup e \sqcup f) \rhd n} \\
        &\quad \text{ by naturality of } \rho \\
        &= \iota^{-1}_{m, b'' \sqcup x', (y' \sqcup c'' \sqcup e) \rhd n} \circ (\Id, \Id_{b'' \rhd \emptyset} \sqcup \hat{g}) \circ (\Id, \Id_{a \rhd \emptyset} \sqcup \hat{f}) \circ \iota_{m, a \sqcup b, (c \sqcup d \sqcup e \sqcup f) \rhd n} \\
        &= u'
\end{align*}
as required.
\end{proof}

\section{Skein Categories as \(k\)-linear Factorisation Homology}
\label{section:fact-homologies}
In this section, after briefly introducing factorisation homology, we shall prove that the \(k\)-linear (orientable) factorisation homology \(\int^{\Cat^{\times}_k}_{\_} \mathscr{V}\) is simply the skein category functor \(\Sk_{\mathscr{V}} (\_)\). Thus making precise the expected relation between factorisation homology and skein categories. 

\subsection{The Category \texorpdfstring{\(\Cat_k^{\times}\)}{Catktimes}}
\label{sec:catkdefn}
Firstly, we define \(\Cat_k^{\times}\).
\begin{defn}
A \emph{\((2,1)\)-category} \(\mathscr{C}\) is a \(2\)-category\footnote{Throughout this paper we shall assume all \(2\)-categories are strict i.e.\ categories enriched over \(\Cat\).} for which all \(2\)-morphisms have inverses.
\end{defn}

\begin{defn}
Let \(k\) be a commutative ring with identity. The category \(\kMod\) is the category of left \(k\)-modules and module homomorphisms. If \(k\) is a field then \(\kMod\) is \(\Vect\), the category of \(k\)-vector spaces and \(k\)-linear transformations.
\end{defn}

\begin{defn}
A \emph{\(k\)-linear category} is a category enriched over \(\kMod\) and a \emph{\(k\)-linear functor} is a \(\kMod\)-enriched functor.
\end{defn}

\begin{defn}
The \emph{category of \(k\)-linear categories} \(\Cat_k\) is the \((2,1)\)-category whose
\begin{enumerate}
    \item objects are small \(k\)-linear categories;
    \item \(1\)-morphisms are \(k\)-linear functors;
    \item \(2\)-morphisms are natural isomorphisms.
\end{enumerate}
\end{defn}

The \((2, 1)\)-category \(\Cat_k\) is a strict monoidal category with the categorical product \(\times\) as monoidal product:
\begin{enumerate}
    \item The product \(\mathscr{C} \times \mathscr{D}\) has as objects pairs \((m,n)\) where \(m \in \mathscr{C}\) and \(n \in \mathscr{D}\) and as morphisms pairs \((f,g)\) where \(f: m \to m'\) is a morphism in \(\mathscr{C}\) and \(g: n \to n'\) is a morphism in \(\mathscr{D}\).
    \item The monoidal unit \(1_{\Cat}\) is the category \(\catname{Pt}\) with a single object and a single morphism which is the identity morphism on this object
\end{enumerate}
\subsection{Factorisation Homology}
\label{section:FactorisationHomology}

We shall now define the \(n\)-dimensional factorisation homology \(\int_{M}^{\mathscr{C}^{\otimes}} \mathscr{E}\) which takes as inputs a framed \(E_n\)-algebra \(F_{\mathscr{E}}\) and an \(n\)-dimensional manifold \(M\). Except in this subsection, we shall only consider factorisation homologies of surfaces i.e. we shall assume \(n=2\). General introductory references for factorisation homology include Ginot \cite{Ginot} and Ayala and Francis~\cite{AyalaFrancis, AFPrimer}.

\subsubsection{Definition}
A factorisation homology is dependent on a choice of a monoidal \((\infty, 1)\)-category \(\mathcal{C}^{\otimes}\). We require that \(\mathscr{C}^{\otimes}\) is \(\otimes\)-presentable.

\begin{rmk}
An \((\infty, 1)\)-category is a category for which all \(k\)-morphisms for \(k>1\) are invertible. In the literature around factorisation homology, \((\infty, 1)\)-category is usually abbreviated to \(\infty\)-category.
\end{rmk}

\begin{defn}[{\cite[Definition~3.4]{AyalaFrancis}}]
A symmetric monoidal \((\infty,1)\)-category \(\mathscr{C}^{\otimes}\) is \emph{\(\otimes\)-presentable} if 
\begin{enumerate}
    \item \(\mathscr{C}\) is locally presentable with respect to an infinite cardinal \(\kappa\) i.e.\ is locally small, cocomplete and is generated under \(\kappa\)-filtered colimits by a set of \(\kappa\)-compact objects;
    \item the monoidal structure distributes over small colimits i.e.\ the functor \(C \otimes \_ : \mathscr{C} \to \mathscr{C}\) carries colimit diagrams to colimit diagrams.
\end{enumerate}
\end{defn}

\begin{rmk}
In this paper as we are dealing with surfaces, \(\mathscr{C}^{\otimes}\) will always be a \((2, 1)\)-category.
\end{rmk}

\begin{ex}
The \((2,1)\)-category \(\Cat_k^{\times}\) (see \cref{sec:catkdefn}) is \(\times\)-presentable \cites[Section~4]{KellyLack01}[7,115]{Kelly05}.
\end{ex}

We shall now define framed \(E_n\)-algebras which first requires us to define \(\Mfld{n}\) and its subcategory \(\Disc{n}\).

\begin{defn}
Let \(X\) and \(Y\) be smooth, oriented, finitary, \(n\)-dimensional manifolds. We denote by \(\Emb{n}(X,Y)\) the \(\infty\)-groupoid of the topological space of smooth oriented finitary embeddings of \(X\) into \(Y\) with the smooth compact open topology, i.e the objects of \(\Emb{n}(X, Y)\) are smooth oriented embeddings, the \(1\)-morphisms are isotopies, the \(2\)-morphisms are homotopies between the \(1\)-morphisms and so on. 
\end{defn}

\begin{defn}
Let \(\Mfld{n}\) be the symmetric monoidal \((\infty,1)\)-category whose
\begin{enumerate}
    \item objects are oriented \(n\)-dimensional manifolds;
    \item \(\Hom\)-space of morphisms between manifolds \(X\) and \(Y\) is the \(\infty\)-groupoid \(\Emb{n}(X, Y)\);
    \item symmetric monoidal product is disjoint union.
\end{enumerate}
\end{defn}

\begin{defn}
Let \(\Disc{n}\) be the full subcategory of \(\Mfld{n}\) of finite disjoint unions of \(\mathbb{R}^n\). Denote the inclusion functor by \(I: \Disc{n} \to \Mfld{n}\).
\end{defn}

\begin{defn}
A \emph{framed \(E_n\)-algebra} is a symmetric monoidal functor \(F_{\mathscr{E}}: \Disc{n} \to \mathscr{C}^{\otimes}\) where \(\mathscr{C}^{\otimes}\) is a symmetric monoidal \((\infty,1)\)-category. As \(F_{\mathscr{E}}\) is determined on objects by its value on a single disc, we define \(\mathscr{E} := F_{\mathscr{E}}(\mathbb{R}^n)\), and we use \(\mathscr{E}\) to refer to the associated framed \(E_n\)-algebra. 
\end{defn}

\begin{rmk}
Factorisation homologies can take more general \(n\)-disc algebras as coefficients. Let \(\catname{Mfld}^{G, \sqcup}_n\) be the category of smooth finitary \(n\)-dimensional manifolds with a \(G\)-structure: the Hom-space \(\catname{Emb}^{G}_n(X,Y)\) is the \(\infty\)-groupoid of \(G\)-framed embeddings of \(X\) into \(Y\). A \(\catname{Disc}^G_n\)-algebra is an symmetric monoidal functor \(F_{\mathscr{E}}: \catname{Disc}^{G,\sqcup}_n \to \mathscr{C}^{\otimes}\) where \(\catname{Disc}^{G,\sqcup}_n\) is the full subcategory of \(\catname{Mfld}^{G, \sqcup}_n\) of disjoint unions of \(G\)-framed discs. So a framed \(E_n\)-algebra is a \(\Disc{n}\)-algebra.
\end{rmk}

\begin{rmk}
The terminology framed \(E_n\)-algebra is somewhat confusing as there is also a notion of an \(E_n\)-algebra which is a \(\catname{Disc}^{\operatorname{fr}}_n\)-algebra. So a framed \(E_n\)-algebra relates to oriented discs and a \(E_n\)-algebra relates to framed discs. We shall only consider oriented manifolds and discs in this paper.
\end{rmk}

\begin{defn}
\label{defn:facthom}
Let \(\mathscr{C}^{\otimes}\) be a \(\otimes\)-presentable symmetric monoidal \((\infty,1)\)-category and let \(F: \Disc{n} \to \mathscr{C}^{\otimes}\) be a framed \(E_n\)-algebra with \(\mathscr{E} := F(\mathbb{R}^n)\). The left Kan extension of the diagram
\[
\begin{tikzcd}
\Disc{n} \arrow[r, "F"] \arrow[d, hook, "I"'] 
				&  \mathscr{C}^{\otimes} \\
\Mfld{n} \arrow[ru, dashrightarrow, "\int_{\_} \mathscr{E}"'] 
				& 
\end{tikzcd}
\]
is called the\footnote{As factorisation homology is defined via a universal construction we have uniqueness up to a contractible space of isomorphisms.} \emph{factorisation homology} with coefficients in \(\mathscr{E}\); its image on the manifold \(\Sigma\) is called the factorisation homology of \(\Sigma\) over \(\mathscr{E}\) and is denoted \(\int^{\mathscr{C}}_{\Sigma} \mathscr{E}\) or \(\int_{\Sigma} \mathscr{E}\).  
\end{defn}

\subsubsection{Excision}
Factorisation homology like classical homology satisfies an excision property: the factorisation homology of a cylinder gluing of two manifolds can be obtained from the factorisation homology of the original manifolds by tensoring relative to the submanifold glued along; hence, factorisation homology is determined locally. The relative tensor product is the relative tensor product of \(\otimes\)-presentable categories.
\begin{defn}
\label{defn:barrelativetensorproduct}
Let \(\mathscr{C}^{\otimes}\) be a \(\otimes\)-presentable symmetric monoidal \((\infty,1)\)-category. 
Let \(\mathscr{A}\) be an algebra object in \(\mathscr{C}^{\otimes}\), and let \(\mathscr{M}\) and \(\mathscr{N} \in \mathscr{C}^{\otimes}\) be right and left modules respectively over the algebra object \(\mathscr{A}\). The \emph{relative tensor product} \(\mathscr{M} \ftensor{\otimes}_{\mathscr{A}} \mathscr{N}\) is the colimit of the \(2\)-sided bar construction
\[
\begin{tikzcd}
\dots 
        \ar[r, shift left]
        \ar[r, shift right]
        \ar[r, shift left=1.68ex]
        \ar[r, shift right=1.68ex]
    &\mathscr{M} \otimes \mathscr{A} \otimes \mathscr{A} \otimes \mathscr{N}
        \ar[r, shift left=1.12ex]
        \ar[r]
        \ar[r, shift right=1.12ex]
    &\mathscr{M} \otimes \mathscr{A} \otimes \mathscr{N}
        \ar[r, shift left]
        \ar[r, shift right]
    & \mathscr{M} \otimes \mathscr{N}
\end{tikzcd}
\]
\end{defn}

\begin{figure}[h]
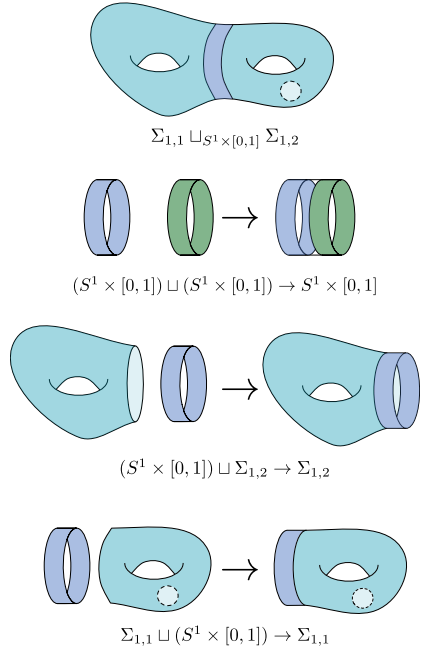

\label{fig:1}
    \centering
    \myincludesvg{width=0.33\textwidth}{}{SurfaceMaps}
    \caption{An example of the maps which induce the monoidal and module structures of the factorisation homologies.}
\end{figure}

Let \(\Sigma = M \sqcup_{C \times [0, 1]} N\) be the collar gluing of the \(n\)-dimensional manifolds \(M\) and \(N\) along \(C \times [0, 1]\) where \(C\) is an \((n-1)\)-dimensional manifold. The factorisation homology \(\int_{C \times [0, 1]} \mathscr{E}\) can be equipped with a monoidal structure induced by the embedding
\[\left(C \times [0,1] \right) \sqcup \left( C \times [0,1] \right) \xhookrightarrow{} C \times [0,1]\]
which retracts both copies of \(C \times [0,1]\) in the second coordinate and includes them into another copy of \(C \times [0,1]\).
The embeddings 
\begin{align*}
    M \sqcup (C \times [0,1]) &\xhookrightarrow{} M \\
    (C \times [0, 1]) \times N &\xhookrightarrow{} N
\end{align*}
induce a right \(\int_{C \times [0, 1]} \mathscr{E}\)-module structure on \(\int_M \mathscr{E}\) and a left \(\int_{C \times [0, 1]} \mathscr{E}\)-module structure on \(\int_N \mathscr{E}\). In other words, \(\int_{C \times [0, 1]} \mathscr{E}\) is an algebra object in \(\mathscr{C}^{\otimes}\), and \(\int_M \mathscr{E}\) and \(\int_N \mathscr{E}\) are right and left modules over this algebra object, so the relative tensor product \(\int_M \mathscr{E} \ftensor{\otimes}_{\left(\int_{C \times [0,1]} \mathscr{E} \right)} \int_{N} \mathscr{E}\) is defined.

\begin{thm}[{\cite[19]{AyalaFrancis}}]
\label{thm:excision}
Let \(\Sigma = M \sqcup_{C \times [0, 1]} N\) be the collar gluing of the \(n\)-dimensional manifolds \(M\) and \(N\) along \(C \times [0, 1]\) where \(C\) is a \((n-1)\)-dimensional manifold. There is an equivalence of categories 
\[
    \int_{M \sqcup_{C \times [0,1]} N} \mathscr{E} 
    \simeq \int_M \mathscr{E} \ftensor{\otimes}_{\int_{C \times [0, 1]} \mathscr{E}} \int_N \mathscr{E}.
\]
\end{thm}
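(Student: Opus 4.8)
The statement is \cite[19]{AyalaFrancis}; the plan is to reduce it to a cofinality argument on categories of disc configurations.

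First I would unwind the right-hand side. By \cref{defn:barrelativetensorproduct} it is the colimit over \(\Delta^{\mathrm{op}}\) of the two-sided bar construction \(B_\bullet\) whose \(p\)-th term is \(\int_M \mathscr{E} \otimes \bigl( \int_{C \times [0,1]} \mathscr{E} \bigr)^{\otimes p} \otimes \int_N \mathscr{E}\). Factorisation homology is symmetric monoidal, i.e.\ \(\int_{X \sqcup Y} \mathscr{E} \simeq \int_X \mathscr{E} \otimes \int_Y \mathscr{E}\) — this follows from the universal property of the left Kan extension together with the fact that \(\Disc{n}\) is the free symmetric monoidal \((\infty,1)\)-category generated by \(\mathbb{R}^n\) — so I would rewrite the \(p\)-th term as \(\int_{M \sqcup (C \times [0,1])^{\sqcup p} \sqcup N} \mathscr{E}\). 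The face and degeneracy maps are images under \(\int_{\_}\mathscr{E}\) of embeddings of \(n\)-manifolds: the inner faces glue a cylinder onto a neighbouring cylinder, the two outer faces glue a cylinder onto the collar of \(M\) respectively \(N\), and the degeneracies insert an empty cylinder. In this way \(B_\bullet\) is identified with \(\int_{\_}\mathscr{E}\) applied to a simplicial \(n\)-manifold \(\widetilde{B}_\bullet\colon [p] \mapsto M \sqcup (C \times [0,1])^{\sqcup p} \sqcup N\).

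Next I would construct an augmentation \(\widetilde{B}_\bullet \to M \sqcup_{C \times [0,1]} N\): at level \(p\) one shrinks the collar of \(M\), the \(p\) cylinders, and the collar of \(N\) so that they sit disjointly inside the glued manifold and together fill a region isotopic to all of \(M \sqcup_{C \times [0,1]} N\). Applying \(\int_{\_}\mathscr{E}\) gives an augmented simplicial object \(B_\bullet \to \int_{M \sqcup_{C \times [0,1]} N} \mathscr{E}\), and the theorem becomes the claim that this augmentation exhibits \(\int_{M \sqcup_{C \times [0,1]} N} \mathscr{E}\) as the colimit of \(B_\bullet\).

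The hard part will be verifying that claim. Here I would use that factorisation homology can be computed as the colimit, over the \((\infty,1)\)-category of finite configurations of discs embedded in \(X\), of \(\mathscr{E}^{\otimes k}\) where \(k\) is the number of discs. Since colimits commute with one another, it suffices to check that the simplicial \((\infty,1)\)-category sending \([p]\) to the product of (disc configurations in \(M\)), (disc configurations in \(p\) copies of \(C \times [0,1]\)), and (disc configurations in \(N\)) — augmented, via the collar-shrinking embeddings, to the category of disc configurations in \(M \sqcup_{C \times [0,1]} N\) — is a colimit diagram of \((\infty,1)\)-categories. This is a purely topological statement: every finite configuration of discs in \(M \sqcup_{C \times [0,1]} N\) can be isotoped off the gluing locus into the union of the shrunken pieces, and the space of ways of doing so — recording, for each disc meeting the cylindrical region, which adjacent piece it is pushed into — is contractible, which is exactly what makes the bar diagram cofinal. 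I expect controlling this space of collar-sweeping isotopies to be the main obstacle; it is here that the product form \(C \times [0,1]\) of the gluing region is essential, as it supplies the preferred direction along which to push. The remaining ingredients — symmetric monoidality of \(\int_{\_}\mathscr{E}\) and interchange of colimits — are formal.
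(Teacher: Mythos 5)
The paper does not prove this theorem: it is quoted directly from Ayala and Francis with no argument given, so there is nothing on the paper's side to compare your sketch against.

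As a reconstruction of the cited proof, your outline identifies the right moves: symmetric monoidality of factorisation homology (so that the $p$-th bar term becomes $\int_{M \sqcup (C\times[0,1])^{\sqcup p} \sqcup N}\mathscr{E}$), realisation of the bar resolution by a simplicial manifold, and reduction to a cofinality statement about disc configurations. Two places need tightening. The interchange of the $\Delta^{\mathrm{op}}$-colimit with the disc-configuration colimits is not bare Fubini; the mechanism is that the augmented simplicial diagram of disc categories is a colimit diagram in $\infty$-categories, combined with the fact that the colimit of a diagram valued in $\infty$-categories is the localization of its Grothendieck construction at cocartesian edges, and localizations are final. More importantly, the contractibility of the ``space of collar-sweeping isotopies'' is the entire technical content of the theorem and cannot be waved through: the $\infty$-groupoid that must be shown contractible records an isotopy of the whole configuration together with a simplex level, not merely a choice per disc. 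Ayala and Francis do not analyse these isotopy spaces directly. Instead they prove a push-forward formula along a collar function $\pi\colon M\sqcup_{C\times[0,1]}N\to[0,1]$, obtaining $\int_{M\sqcup_{C\times[0,1]}N}\mathscr{E}\simeq\int_{[0,1]}\pi_*\mathscr{E}$ and reducing the claim to the one-dimensional case, where the bar resolution of a bimodule over an associative algebra is classical. Your sketch, pursued as written, leaves the hardest step asserted rather than proved; the push-forward formula is the device that actually closes it.
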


\subsubsection{Characterisation of the Factorisation Homology of Surfaces}

\begin{thm}[{\cites{AyalaFrancis}[Theorem~2.5]{david1}}]
\label{thm:factcharacterised}
Let \(F_{\mathscr{E}}\) be a framed \(E_2\)-algebra in \(\mathscr{C}^{\otimes}\). The functor \(\int_{\_} \mathscr{E}\) is characterised by the following properties:
\begin{enumerate}
    \item If \(U\) is contractible then there is an equivalence in \(\mathscr{C}^{\otimes}\)
    \[\int_{U} \mathscr{E} \simeq \mathscr{E};\]
    \item If \(M \cong C \times [0, 1]\) for some \(1\)-manifold with corners \(C\) then the inclusion of intervals inside a larger interval induces a canonical framed \(E_1\)-structure on \(\int_M \mathscr{E}\).
    \item \(\int_{\_} \mathscr{E}\) satisfies excision (see \cref{thm:excision}).
\end{enumerate}
\end{thm}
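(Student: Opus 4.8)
The statement has two halves --- that $\int_{\_}\mathscr{E}$ satisfies properties (1)--(3), and that any functor out of $\Mfld{2}$ satisfying them is equivalent to $\int_{\_}\mathscr{E}$ --- and the plan is to treat these in turn.

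For the first half, properties (1) and (2) are essentially formal. Because the inclusion $I:\Disc{2}\hookrightarrow\Mfld{2}$ is fully faithful and the left Kan extension exists (by $\otimes$-presentability of $\mathscr{C}^{\otimes}$), the comparison $F_{\mathscr{E}}\Rightarrow(\int_{\_}\mathscr{E})\circ I$ is an equivalence; a contractible object of $\Mfld{2}$ is diffeomorphic to $\mathbb{R}^2\in\Disc{2}$, so $\int_{U}\mathscr{E}\simeq F_{\mathscr{E}}(\mathbb{R}^2)=\mathscr{E}$, which is (1). Property (2) comes from applying $\int_{\_}\mathscr{E}$ to the embedding $(C\times[0,1])\sqcup(C\times[0,1])\hookrightarrow C\times[0,1]$ together with its coherence data, exactly as in the discussion preceding \cref{thm:excision}; the resulting framed $E_1$-structure is canonical since the relevant spaces of embeddings are contractible. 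Property (3) is \cref{thm:excision} itself.

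The content is the uniqueness half. Let $G:\Mfld{2}\to\mathscr{C}^{\otimes}$ be a symmetric monoidal functor satisfying (1)--(3); property (1) forces $G(\mathbb{R}^2)\simeq\mathscr{E}$, so comparing $G$ with $\int_{\_}\mathscr{E}$ is meaningful. The topological input is that every finitary surface is built from finitely many discs by collar gluings: a handle decomposition (equivalently a Morse function) presents $\Sigma$ as an iterated collar gluing $\Sigma_{k+1}=\Sigma_{k}\sqcup_{C_k\times[0,1]}H_k$ in which $\Sigma_0$ is a disjoint union of discs, each attached handle $H_k$ is a disc, and each $C_k$ is a finite disjoint union of intervals, so each $C_k\times[0,1]$ is a finite disjoint union of copies of $\mathbb{R}^2$. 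One then inducts on the number of handles. In the base case $G$ and $\int_{\_}\mathscr{E}$ agree on disjoint unions of discs by property (1) and monoidality. In the inductive step, both functors already agree on $H_k$ and on $C_k\times[0,1]$, with matching $E_1$- and module structures because these are induced by the same embeddings of surfaces; property (3) then identifies $G(\Sigma_{k+1})$ with $G(\Sigma_{k})\ftensor{\otimes}_{G(C_k\times[0,1])}G(H_k)$ and likewise for $\int_{\_}\mathscr{E}$, so the inductive hypothesis closes the comparison on objects.

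The main obstacle is promoting these object-wise equivalences to an equivalence of functors $\Mfld{2}\to\mathscr{C}^{\otimes}$: the comparison must be natural in embeddings and compatible with the symmetric monoidal and higher-categorical structure, not merely a choice of decomposition for each surface. The clean route, which is the one taken in \cite{AyalaFrancis}, is structural: $\int_{\_}\mathscr{E}$ is the \emph{symmetric monoidal} left Kan extension of $F_{\mathscr{E}}$ along $I$, hence enjoys a universal property among symmetric monoidal functors out of $\Mfld{2}$ under $F_{\mathscr{E}}$; one shows that $\Mfld{2}$ is generated from $\Disc{2}$ by collar gluings, realised as a suitable class of sifted colimits, so that any $\otimes$-excisive extension $G$ of $F_{\mathscr{E}}$ --- that is, any $G$ satisfying (1)--(3) --- automatically factors through this universal property and is therefore equivalent to $\int_{\_}\mathscr{E}$. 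Establishing that collar gluings generate $\Mfld{2}$ and that excision is exactly the statement that $G$ preserves the corresponding colimits is the technical heart; since only the existence and the three listed properties of $\int_{\_}\mathscr{E}$ are needed in the rest of this paper, I would invoke \cite{AyalaFrancis} and \cite[Theorem~2.5]{david1} for the complete proof.
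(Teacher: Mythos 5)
This theorem is stated in the paper as a cited result (attributed to Ayala--Francis and to Ben-Zvi--Brochier--Jordan, Theorem~2.5) and carries no proof of its own, so there is nothing in the paper to compare your argument against line by line. Your sketch is a faithful account of the standard argument --- properties (1) and (2) from the Kan-extension formalism and the formal module structure, property (3) from \cref{thm:excision}, uniqueness by presenting $\Mfld{2}$ via collar gluings from $\Disc{2}$ and identifying excision with preservation of the relevant colimits --- and you correctly identify the technical heart (collar gluings generate $\Mfld{2}$ as an appropriate class of sifted colimits, and the comparison must be promoted from an object-wise to a symmetric monoidal natural equivalence) before deferring to the same references the paper cites, which matches the paper's treatment.
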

\subsection{Equivalence of Relative Tensor Products}
\label{section:relative-tensor-products}
We would like to use \cref{thm:factcharacterised} to prove that the skein category \(\Sk_{\mathscr{V}}(\_)\) is equivalent to the \(k\)-linear factorisation homology \(\int^{\Cat_k^{\times}}_{\_} \mathscr{V}\). In \cref{sec:proofexcision} we proved that skein categories satisfy excision where the relative tensor product used is the Tambara relative tensor product of \(k\)-linear categories (\cref{defn:reltensorproduct}). However, the definition of excision used in the statement of excision of factorisation homology (\cref{thm:excision}) is the bar colimit relative tensor product of \(\otimes\)-presentable symmetric monoidal \((\infty, 1)\)-categories (\cref{defn:reltensorproduct}). We shall now prove that the Tambara relative tensor product \(\mathscr{M} \otimes_{\mathscr{A}} \mathscr{N}\) of \(k\)-linear categories is equivalent to the bar colimit relative tensor product \(\mathscr{M} \ftensor{\otimes}_{\mathscr{A}} \mathscr{N}\) where we consider \(\mathscr{M}, \mathscr{A}, \mathscr{N} \in \Cat_k^{\times}\). 

Firstly we note that as \(\Cat_k^{\times}\) is a \(2\)-category, we can truncate the bar construction after the second step:
\begin{defn}
Let \(\mathscr{A}\) be a monoidal \(k\)-linear category and let \(\mathscr{M}, \mathscr{N}\) be left/right \(\mathscr{A}\)-module \(k\)-linear categories. The \emph{truncated bar construction} is the diagram
\[\begin{tikzcd}
\mathscr{M} \otimes \mathscr{A} \otimes \mathscr{A} \otimes \mathscr{N}  \arrow[r, shift left=4, "G_1"] \arrow[r, "G_2"] \arrow[r, shift right=4, "G_3"]
    & \mathscr{M} \otimes \mathscr{A} \otimes \mathscr{N} \arrow[r, shift left=1.5, "F_1"] \arrow[r, shift right=1.5, "F_2"']
    & \mathscr{M} \otimes \mathscr{N}
\end{tikzcd}
\]
where \begin{alignat*}{3} 
&G_1: \mathscr{M} \otimes \mathscr{A} \otimes \mathscr{A} \otimes \mathscr{N}:\quad &&G_1(m,a,b,n) = (m \lhd a, b, n); \\
&G_2: \mathscr{M} \otimes \mathscr{A} \otimes \mathscr{A} \otimes \mathscr{N}: &&G_2(m,a,b,n) = (m, a \ast b, n); \\
&G_3: \mathscr{M} \otimes \mathscr{A} \otimes \mathscr{A} \otimes \mathscr{N}: &&G_3(m,a,b,n) = (m, a, b \rhd n); \\
&F_1: \mathscr{M} \otimes \mathscr{A} \otimes \mathscr{N}: &&F_1(m,a,n) = (m \lhd a, n); \\
&F_2: \mathscr{M} \otimes \mathscr{A} \otimes \mathscr{N}: &&F_2(m,a,n) = (m, a \rhd n);
\end{alignat*}
with \(m,a,b,n\) objects or morphisms in the categories \(\mathscr{M}, \mathscr{A}, \mathscr{A}, \mathscr{N}\) respectively, and there are two cells
\begin{align*}
    &\kappa_1: F_2 \circ G_1 \to F_1 \circ G_3\\
    &\kappa_2: F_1 \circ G_1 \to F_1 \circ G_2 \\
    &\kappa_3: F_2 \circ G_3 \to F_2 \circ G_2
\end{align*}
where \(\kappa_1\) is the identity and \(\kappa_2(n,a,b,m): ((n \lhd a) \lhd b, m) \to (n \lhd (a \ast b), m)\) and \(\kappa_3(n,a,b,m): (n, a \rhd ( b \rhd m)) \to (n, (a \ast b) \rhd m)\) are given by the associators of the \(\mathscr{A}\) action. 
\end{defn}

\subsubsection{Colimits of the Shape of the Truncated Bar Construction}
Before, considering the bicolimit of the truncated bar construction, we shall briefly consider bicolimits which have the shape of the truncated bar construction.

\begin{defn}
Let \(\mathscr{D}\) be the \(2\)-category 
\[ 
\begin{tikzcd}
\overline{A} 
        \ar[r, "\overline{g}_2"] 
        \arrow[r, shift left=4, "\overline{g}_1"] 
        \arrow[r, shift right=4, "\overline{g}_3"]
    & \overline{B} 
        \arrow[r, shift left, "\overline{f}_1"] 
        \arrow[r, shift right, "\overline{f}_2"'] 
    & \overline{C}
\end{tikzcd}
\]
with \(2\)-cells 
\[
\overline{\kappa}_1: \overline{f}_2 \circ \overline{g}_1 \to \overline{f}_1 \circ \overline{g}_3, \quad
\overline{\kappa}_2: \overline{f}_1 \circ \overline{g}_1 \to \overline{f}_1 \circ \overline{g}_2, \quad
\overline{\kappa}_3: \overline{f}_2 \circ \overline{g}_3 \to \overline{f}_2 \circ \overline{g}_2;
\]
and let \(X: \mathscr{D} \to \mathscr{C}\) be a (strict) \(2\)-functor to a \(2\)-category \(\mathscr{C}\). The images under \(X\) of objects, \(1\)-morphisms or \(2\)-morphisms are denoted without a bar, so
\[
X\left( \begin{tikzcd}
\overline{A} 
        \ar[r, "\overline{g}_2"] 
        \arrow[r, shift left=4, "\overline{g}_1"] 
        \arrow[r, shift right=4, "\overline{g}_3"]
    & \overline{B} 
        \arrow[r, shift left, "\overline{f}_1"] 
        \arrow[r, shift right, "\overline{f}_2"'] 
    & \overline{C}
\end{tikzcd} \right) 
=
\begin{tikzcd}
A \ar[r, "g_2"] \arrow[r, shift left=4, "g_1"] \arrow[r, shift right=4, "g_3"]& B \arrow[r, shift left, "f_1"] \arrow[r, shift right, "f_2"'] & C
\end{tikzcd} 
\text { and } X(\overline{\kappa}_i) = \kappa_i.
\]
\end{defn}

Recall the definition of a bicolimit:
\begin{defn}
Let \(\Diag\) denote the \((2,1)\)-category of diagrams of shape \(J\) in \(\mathscr{C}\):
\begin{enumerate}
    \item The objects are diagrams of shape \(J\) in \(\mathscr{C}\) i.e.\ lax functors \(X: J \to \mathscr{C}\);
    \item The \(1\)-morphisms are pseudonatural transformations;
    \item The \(2\)-morphisms are modifications.
\end{enumerate}
\end{defn}

\begin{defn}
Let \(\mathscr{X}\) denote the \((2,1)\)-category with a single object \(x\), a single \(1\)-morphism \(1_x\) and a single \(2\)-morphism which is the trivial \(2\)-cell \(1: 1_x \to 1_x\). The \emph{trace functor} on \(\mathscr{C}\) is the \(2\)-functor \(\Trace(x): \mathscr{C} \to \mathscr{X}\) which sends all objects in \(\mathscr{C}\) to \(x\), all \(1\)-morphisms to \(1_x\), and all \(2\)-morphisms to \(1\).
\end{defn}

\begin{defn}
Let \(\mathscr{C}\) be a \((2,1)\)-category. Denote by \(\Trace: \mathscr{C} \to \Diag\) the \(2\)-functor which sends an object \(x \in \mathscr{C}\) to the trace functor \(\Trace(x)\), a \(1\)-morphism \(f: x \to y\) to the trivial pseudonatural transformation \(\Gamma: \Trace(x) \to \Trace(y)\), and a \(2\)-morphism to the trivial modification \(\sigma: \Gamma \to \Gamma\).
\end{defn}

\begin{defn}
Let \(\mathscr{C}\) be a \(2\)-category and let \(X\) be a diagram of shape \(J\) in \(\mathscr{C}\). The \emph{\(2\)-colimit of \(X\)} is an object \(\Bicolim(X)\) in \(\mathscr{C}\) together with a pseudonatural equivalence 
\[\Gamma : \Hom_{\mathscr{C}}(\Bicolim(X), \_) \to \Diag(X, \Trace(\_)).\]
\end{defn}

This means that for all \(Y \in \mathscr{C}\) there is an equivalence of categories 
\[\Gamma_Y: \Hom_{\mathscr{C}}(\Bicolim(X), Y) \to \Diag(X, \Trace(Y)),\]
so in order to understand \(\Bicolim(X)\) we shall first look at \(\Diag(X, \Trace(Y))\).

\begin{prop}
The category \(\Diag(X, \Trace(Y))\) has objects of the form
\[
\sigma = \begin{pmatrix}
\sigma_A: A \to Y \\
\sigma_B: B \to Y \\
\sigma_C: C \to Y \\
\sigma_{f_i}: \sigma_B \to \sigma_C \circ f_i\\ 
\sigma_{g_j}: \sigma_A \to \sigma_B \circ g_j\\ 
\end{pmatrix}
\]
where \(i=1,2\) and \(j=1,2,3\), which satisfy the relations:
\begin{align*}
\sigma_C \kappa_1 = \Delta_{13} \Delta_{21}^{-1},\\ 
\sigma_C \kappa_2 = \Delta_{12} \Delta_{11}^{-1},\\ 
\sigma_C \kappa_3 = \Delta_{22} \Delta_{23}^{-1}
\end{align*}
where \(\Delta_{ij}:= (\sigma_{f_i} g_j) \sigma_{g_j} \). The morphisms of \(\Diag(X, \Trace(Y))\) are natural isomorphisms
\[\begin{tikzcd}
C 
    \ar[d, bend right=50, "\sigma_C"', ""{name=A}] 
    \ar[d, bend left=50, "\eta_C", ""'{name=B}]
        \ar[from=A, to=B, Rightarrow, "\Gamma"]
\\ 
Y,
\end{tikzcd}\]
satisfying the relations
\begin{align*}
\eta_{f_1}^{-1} (\Gamma f_1) \sigma_{f_1} &= \eta_{f_2}^{-1} (\Gamma f_2) \sigma_{f_2} \\
(\Delta^{\eta}_{ij})^{-1} (\Gamma f_i g_j) \Delta^{\sigma}_{ij} &= (\Delta^{\eta}_{kl})^{-1} (\Gamma f_k g_l) \Delta^{\sigma}_{kl}
\end{align*}
where \(i, k =1,2\) and \(j,l=1,2,3\).
\end{prop}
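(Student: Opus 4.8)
The plan is to prove this Proposition by unwinding the definitions of a pseudonatural transformation $X \to \Trace(Y)$ and of a modification between two of them, using throughout that $\Trace(Y)$ is the constant diagram: it sends every object of $\mathscr{D}$ to $Y$, every $1$-morphism to $1_Y$, and every $2$-cell to the identity $2$-cell on $1_Y$. Since $\mathscr{D}$ is the free $2$-category on the displayed generating $2$-graph, all such transformations and modifications are determined by their values on the generating $1$- and $2$-cells, which is what produces the finite list of data and relations in the statement.

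First I would describe the objects. A pseudonatural transformation $\sigma \colon X \to \Trace(Y)$ consists of a $1$-morphism $\sigma_d \colon X(d) \to Y$ for each object $d$ of $\mathscr{D}$ together with an invertible $2$-cell $\sigma_\phi \colon \sigma_d \to \sigma_{d'}\circ X(\phi)$ for each $1$-morphism $\phi \colon d \to d'$ (the source is $\Trace(Y)(\phi)\circ\sigma_d = 1_Y\circ\sigma_d = \sigma_d$), subject to a unit axiom, a composition axiom expressing $\sigma_{\psi\phi}$ through $\sigma_\psi$ and $\sigma_\phi$, and, for every $2$-cell $\alpha\colon\phi\Rightarrow\psi$ of $\mathscr{D}$, a naturality square. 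Freeness of $\mathscr{D}$ makes the composition axiom a mere recipe for $\sigma$ on composite $1$-morphisms, so the genuine data is $\sigma_A, \sigma_B, \sigma_C$ and $\sigma_{f_1},\sigma_{f_2},\sigma_{g_1},\sigma_{g_2},\sigma_{g_3}$, exactly the families in the statement; and, since $\Trace(Y)(\alpha)$ is always an identity, the naturality squares impose constraints only for the three generating $2$-cells $\kappa_1,\kappa_2,\kappa_3$. Writing $\Delta_{ij} := (\sigma_{f_i}g_j)\sigma_{g_j}$ for the value $\sigma_{f_i g_j}$ dictated by the composition recipe, the naturality square for $\kappa_1\colon f_2 g_1\Rightarrow f_1 g_3$ collapses to $(\sigma_C\kappa_1)\Delta_{21} = \Delta_{13}$, and likewise for $\kappa_2$ and $\kappa_3$; rearranging gives the three displayed identities $\sigma_C\kappa_1 = \Delta_{13}\Delta_{21}^{-1}$, $\sigma_C\kappa_2 = \Delta_{12}\Delta_{11}^{-1}$, $\sigma_C\kappa_3 = \Delta_{22}\Delta_{23}^{-1}$.

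Next I would describe the morphisms. A morphism of $\Diag(X,\Trace(Y))$ is a modification $\Gamma$ from $\sigma$ to $\eta$, i.e.\ a family of $2$-cells $\Gamma_d \colon \sigma_d \to \eta_d$ with $\eta_\phi\,\Gamma_d = (\Gamma_{d'}X(\phi))\,\sigma_\phi$ for each $1$-morphism $\phi\colon d\to d'$ (where $\Trace(Y)(\phi)\,\Gamma_d$ has again simplified to $\Gamma_d$). Because every $\sigma_\phi$ and $\eta_\phi$ is invertible and every object of $\mathscr{D}$ reaches $C$ along a generating $1$-morphism or a composite of two of them, the equation for $f_i$ forces $\Gamma_B = \eta_{f_i}^{-1}(\Gamma_C f_i)\sigma_{f_i}$ and then the equation for $g_j$ forces $\Gamma_A = (\Delta^\eta_{ij})^{-1}(\Gamma_C f_i g_j)\Delta^\sigma_{ij}$, with $\Delta^\sigma_{ij} = (\sigma_{f_i}g_j)\sigma_{g_j}$ and $\Delta^\eta_{ij} = (\eta_{f_i}g_j)\eta_{g_j}$. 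Thus the whole modification is recovered from the single $2$-cell $\Gamma := \Gamma_C\colon\sigma_C\to\eta_C$, and conversely such a $\Gamma$ extends to a (necessarily unique) modification precisely when these formulas are well defined, i.e.\ independent of $i\in\{1,2\}$ and $j\in\{1,2,3\}$; this is exactly the pair of relations $\eta_{f_1}^{-1}(\Gamma f_1)\sigma_{f_1} = \eta_{f_2}^{-1}(\Gamma f_2)\sigma_{f_2}$ and $(\Delta^\eta_{ij})^{-1}(\Gamma f_i g_j)\Delta^\sigma_{ij} = (\Delta^\eta_{kl})^{-1}(\Gamma f_k g_l)\Delta^\sigma_{kl}$. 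Invertibility of $\Gamma$ is automatic since $\mathscr{C}$ (here $\Cat_k^{\times}$) is a $(2,1)$-category, so these modifications are exactly the displayed natural isomorphisms.

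The step I expect to need the most care is justifying that the listed data together with the three $\kappa$-relations exhaust everything: this rests on checking that $\mathscr{D}$ really is the free $2$-category on its generating $2$-graph, so that strictness of composition eliminates any further coherence conditions, and on keeping the whiskering and vertical-composition conventions consistent in the two square-chasing computations. Everything else is a routine diagram chase.
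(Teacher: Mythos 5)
Your proposal is correct and follows essentially the same route as the paper: unwind the definition of a pseudonatural transformation $\sigma\colon X\to\Trace(Y)$ (using that $\Trace(Y)$ is constant) to reduce its data to $\sigma_A,\sigma_B,\sigma_C$ and the five component $2$-cells on generating $1$-morphisms, read off the three $\kappa$-naturality squares as $\sigma_C\kappa = \Delta_{ij}\Delta_{kl}^{-1}$, and then observe that a modification is determined by $\Gamma_C$ with the stated compatibility relations because the component $2$-cells $\sigma_{f_i},\sigma_{g_j}$ are invertible. The only stylistic difference is that you appeal explicitly to the freeness of $\mathscr{D}$ on its generating $2$-graph to justify that the listed data and relations are exhaustive, which the paper leaves implicit; this is a slight improvement in rigor rather than a different argument.
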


\begin{proof}
\sloppy This proof amounts to unravelling the definitions. An object of \(\Diag(X, \Trace(Y))\) is a pseudonatural transformation \(\sigma: X \to \Trace(Y)\). By the definition of a pseudonatural transformation we have
\begin{enumerate}
    \item for every \(\overline{X} \in \mathscr{D}\), that is \(\overline{X} = \overline{A}, \overline{B}, \overline{C}\), \(1\)-morphisms \(\sigma_{\overline{X}}: S(\overline{X}) \to \\Trace(Y)(\overline{X})\): we shall usually denote these morphisms simply as \(\sigma_{X}\) as they are morphisms \(\sigma_{X}: X \to Y\) for \(X = A, B, C\);
    \item for every pair of objects \((\overline{W}, \overline{X})\) in \(\mathscr{D}\), a natural transformation
    \[\sigma_{\overline{W}, \overline{X}}: (\sigma_{W})^* \circ \Trace(Y)(\overline{W}, \overline{X}) \to (\sigma_{\overline{X}})_* \circ S(\overline{W}, \overline{X}). \]
    This means that for every \(1\)-morphism \(\overline{h}: \overline{W} \to \overline{X}\) in \(\mathscr{D}\), that is \(\overline{h} = \overline{g}_1, \overline{g}_2, \overline{g}_3, \overline{f}_1, \overline{f}_2, \overline{f_i} \circ \overline{g_j}, 1_{W}\), there is a \(2\)-morphism
    \[\sigma_h : \sigma_W \to \sigma_X \circ h. \]
    As we are working with \((2,1)\)-categories, \(\sigma_h\) is automatically a \(2\)-isomorphism. As \(\sigma_{\overline{W}, \overline{X}}\) is natural, we have for every \(2\)-morphism \(\overline{\kappa}: \overline{h} \to \overline{l}\), that is \(\overline{\kappa} = \overline{\kappa}_1, \overline{\kappa}_2, \overline{\kappa}_3, \Id_{\overline{h}} \), that the following diagram commutes
    \[
    \begin{tikzcd}
    \sigma_W 
            \ar[d, "\sigma_h"]
            \ar[rd, "\sigma_l"]
        & \\
    \sigma_X \circ h 
            \ar[r, "\sigma_Y \circ \kappa"]
        & \sigma_X \circ l
    \end{tikzcd}
    \]
    This result is trivial for \(\Id_{\overline{h}}\), so let \(\overline{\kappa} = \overline{\kappa}_1, \overline{\kappa}_2\) or, \(\overline{\kappa}_3\). In which case, \(\overline{W}= \overline{A}\), \(\overline{X} = \overline{C}\), \(l = f_i \circ g_j\) and \(h = f_k \circ g_l\). As \(\sigma_h\) is invertible, we have that 
    \[\sigma_C \circ \kappa = \sigma_{f_i \circ g_j} \sigma^{-1}_{f_k \circ g_l}\]
    \item for every object \(\overline{X}\) of \(\mathscr{D}\), \(\sigma_{1_{\overline{X}}}\) is the identity natural isomorphism
    \item for every composition of morphisms \(f \circ g\) in \(\mathscr{D}\), \(\sigma_{f \circ g} = (\sigma_f g)(\sigma_g) \)
\end{enumerate}    
So we have that \(\sigma : X \to \Trace(y)\) consists of \(1\)-morphisms \(\sigma_X : X \to Y\) for \(X = A, B, C;\) and \(2\)-morphisms \(\sigma_{h}: \sigma_W \to \sigma_X \circ h\) for \(h= f_i, g_j : W \to X \) such that \(\pi_D \kappa_1 = \Delta_{13}  \Delta_{21}^{-1}\),
\(\pi_D \kappa_2 = \Delta_{12}  \Delta_{11}^{-1}\) and
\(\pi_D \kappa_3 = \Delta_{22} \Delta_{23}^{-1}\) where  \(\Delta_{ij} = (\sigma_{f_i} g_j) \sigma_{g_j}\).

A morphism \(\Gamma : \sigma \to \eta \) in \(\Diag(X, \Trace(Y))\) is a modification between \(\sigma\) and \(\eta\). The modification \(\Gamma\) assigns to each object \(\overline{X} \in \mathscr{D}\), that is \(\overline{X} = \overline{A}, \overline{B}, \overline{C}\), a \(2\)-morphism 
\[
\begin{tikzcd}[column sep=large]
X 
        \ar[r, bend left, "\sigma_X", ""'{name=source}]
        \ar[r, bend right, "\eta_X"', ""{name=target}]
        \ar[from=source, to=target, Rightarrow, "\Gamma_X", start anchor=north, end anchor=south]
    & Y 
\end{tikzcd}
\]
such that the following diagram commutes for all \(\overline{h}: \overline{W} \to \overline{X}\)
    \[
    \begin{tikzcd}
    \sigma_W 
            \ar[r, "\Gamma_W"]
            \ar[d, "\sigma_h"]
        & \eta_W 
            \ar[d, "\eta_h"] \\
    \sigma_X \circ h 
            \ar[r, "\Gamma_X h"]
        & \eta_X \circ h
    \end{tikzcd}
    \]
As all \(2\)-cells are invertible, applying this relation to the \(f_i\)s gives 
\[\Gamma_B = \eta_{f_i}^{-1} (\Gamma_C f_i) \sigma_{f_i}\]
and then applying this relation to the \(g_i\)s gives
\begin{align*}
    \Gamma_A &= \eta_{g_j}^{-1} (\Gamma_B g_j) \sigma_{g_j} \\
        &= \eta_{g_j}^{-1} \Big( \big(\eta_{f_i}^{-1} (\Gamma_C f_i) \sigma_{f_i}\big) g_j \Big) \sigma_{g_j} \\
        &\quad\text{ substituting } \Gamma_B = \eta_{f_i}^{-1} (\Gamma_C f_i) \sigma_{f_i} \\
        &= \eta_{g_j}^{-1} (\eta_{f_i}^{-1} g_j) (\Gamma_C f_i g_j) (\sigma_{f_i} g_j )  \sigma_{g_j}\\
        &\quad\text{ as }  \big(\eta_{f_i}^{-1} (\Gamma_C f_i) \sigma_{f_i}\big) g_j = (\eta_{f_i}^{-1} g_j) (\Gamma_C f_i g_j) (\sigma_{f_i} g_j ) \\
        &= (\Delta^{\eta}_{ij})^{-1} (\Gamma_C f_i g_j) \Delta^{\sigma}_{ij}
\end{align*}
from which we conclude that it is sufficient to define \(\Gamma_C\) and that the relation for \(g_j\) is automatically satisfied if it is for the compositions \(f_i \circ g_j\).
\end{proof}

\begin{rmk}
The morphisms \(\sigma_A, \sigma_B\) and \(\sigma_C\) fit into the diagram
\[\begin{tikzcd}
A 
        \ar[rd, bend right, "\sigma_A"'] 
        \ar[r, "g_2"] 
        \arrow[r, shift left=4, "g_1"] 
        \arrow[r, shift right=4, "g_3"]
    & B \ar[d, "\sigma_B"] 
        \arrow[r, shift left, "f_1"] 
        \arrow[r, shift right, "f_2"'] 
    & C 
        \ar[ld, bend left, "\sigma_C"] \\
    & Y 
    &,
\end{tikzcd} 
\]
and the natural isomorphisms \(\sigma_{f_i}\) and \(\sigma_{g_j}\) are \(2\)-cells in this diagram.
\end{rmk}
\subsubsection{Proof that the Relative Tensor Product is a Colimit of the Bar Construction}
We shall now prove that the Tambara relative tensor product of \(k\)-linear categories is the bicolimit in \(\Cat_k\) of the truncated bar construction.

\begin{thm}
\label{thm:relaresame}
The Tambara relative tensor product \(\mathscr{M} \otimes_{\mathscr{A}} \mathscr{N}\) of the right \(\mathscr{C}\)-module \(k\)-linear category \(\mathscr{M}\) and the left \(\mathscr{C}\)-module \(k\)-linear category \(\mathscr{N}\) relative to the \(k\)-linear monoidal category \(\mathscr{A}\) is the bicolimit of the truncated bar construction
\[\begin{tikzcd}
\mathscr{M} \otimes \mathscr{A} \otimes \mathscr{A} \otimes \mathscr{N}  \arrow[r, shift left=4, "G_1"] \arrow[r, "G_2"] \arrow[r, shift right=4, "G_3"]
    & \mathscr{M} \otimes \mathscr{A} \otimes \mathscr{N} \arrow[r, shift left=1.5, "F_1"] \arrow[r, shift right=1.5, "F_2"']
    & \mathscr{M} \otimes \mathscr{N}
\end{tikzcd}
\]
with \(2\)-cells \(\kappa_1\), \(\kappa_2\), \(\kappa_3\) defined above. Hence, there is a categorical equivalence 
\[
\mathscr{M} \ftensor{\otimes}_{\mathscr{A}} \mathscr{N} \simeq \mathscr{M} \otimes_{\mathscr{A}} \mathscr{N}.
\]
where \(\ftensor{\otimes}_{\mathscr{A}}\) is the bar colimit relative tensor product in \(\Cat_k^{\times}\) and \(\otimes_{\mathscr{A}}\) is the Tambara relative tensor product of \(k\)-linear categories.
\end{thm}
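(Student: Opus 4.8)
The plan is to verify that the Tambara relative tensor product $\mathscr{M} \otimes_{\mathscr{A}} \mathscr{N}$ satisfies the universal property defining the bicolimit of the truncated bar diagram, and then to conclude via the bicategorical Yoneda lemma. Write $X: \mathscr{D} \to \Cat_k$ for the truncated bar diagram together with its cells $\kappa_1, \kappa_2, \kappa_3$. By definition, $\Bicolim(X)$ represents the $2$-presheaf $Y \mapsto \Diag(X, \Trace(Y))$ on $\Cat_k$, and the preceding proposition gives an explicit description of this category; on the other hand, by the defining universal property of the Tambara product, $\mathscr{M} \otimes_{\mathscr{A}} \mathscr{N}$ represents $Y \mapsto \bal(\mathscr{M}, \mathscr{N}; Y)$. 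So it is enough to exhibit, for each $Y$, an equivalence of categories $\Phi_Y : \Diag(X, \Trace(Y)) \xrightarrow{\sim} \bal(\mathscr{M}, \mathscr{N}; Y)$ that is pseudonatural in $Y$. Then $\Bicolim(X) \simeq \mathscr{M} \otimes_{\mathscr{A}} \mathscr{N}$, and since (as already observed, because $\Cat_k^{\times}$ is a $2$-category) the bar-colimit relative tensor product $\mathscr{M} \ftensor{\otimes}_{\mathscr{A}} \mathscr{N}$ is computed by this same bicolimit, the stated equivalence $\mathscr{M} \ftensor{\otimes}_{\mathscr{A}} \mathscr{N} \simeq \mathscr{M} \otimes_{\mathscr{A}} \mathscr{N}$ follows.

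To construct $\Phi_Y$ I would use the description of $\Diag(X, \Trace(Y))$ from the proposition directly. An object there is data $(\sigma_A, \sigma_B, \sigma_C, \sigma_{f_1}, \sigma_{f_2}, \sigma_{g_1}, \sigma_{g_2}, \sigma_{g_3})$; send it to the bilinear functor $\sigma_C : \mathscr{M} \times \mathscr{N} \to Y$ equipped with the balancing $\iota := \sigma_{f_2} \circ \sigma_{f_1}^{-1} : \sigma_C \circ F_1 \Rightarrow \sigma_C \circ F_2$, whose component at $(m,a,n)$ is $\iota_{m,a,n} : \sigma_C(m \lhd a, n) \to \sigma_C(m, a \rhd n)$. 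A morphism $\Gamma$ of $\Diag(X, \Trace(Y))$ is, by the proposition, determined by its component $\Gamma_C : \sigma_C \Rightarrow \eta_C$, so set $\Phi_Y(\Gamma) := \Gamma_C$. For essential surjectivity, a balanced functor $(F, \iota)$ is the image of the object with $\sigma_C = F$, $\sigma_B = F \circ F_1$, $\sigma_A = F \circ F_1 \circ G_1$, $\sigma_{f_1} = \Id$, $\sigma_{g_1} = \Id$, $\sigma_{f_2} = \iota$, $\sigma_{g_3}$ the whiskering of $\iota$ by $G_1$ (components $\iota_{m \lhd a, b, n}$), and $\sigma_{g_2}$ the $2$-cell with components $F(\beta_{m,a,b}, \Id_n)$.

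What remains is bookkeeping with these data. Writing $\Delta_{ij} := (\sigma_{f_i} g_j)\,\sigma_{g_j}$ as in the proposition, one reads off $\iota_{m \lhd a, b, n} = \Delta_{21}\Delta_{11}^{-1}$, $\iota_{m, a, b \rhd n} = \Delta_{23}\Delta_{13}^{-1}$ and $\iota_{m, a \ast b, n} = \Delta_{22}\Delta_{12}^{-1}$, while the proposition's relations give $\sigma_C(\beta_{m,a,b}, \Id_n) = \sigma_C\kappa_2 = \Delta_{12}\Delta_{11}^{-1}$ and $\sigma_C(\Id_m, \beta_{a,b,n}) = \sigma_C\kappa_3 = \Delta_{22}\Delta_{23}^{-1}$; substituting these into the two legs of the pentagon for $\iota$ and cancelling via $\Delta_{13} = \Delta_{21}$ (the relation $\sigma_C\kappa_1 = \Delta_{13}\Delta_{21}^{-1}$ with $\kappa_1 = \Id$), both legs reduce to $\Delta_{22}\Delta_{11}^{-1}$, so the pentagon holds and $\Phi_Y$ really does land in $\bal(\mathscr{M},\mathscr{N};Y)$; conversely, for the object built from $(F,\iota)$ the three relations of the proposition are precisely the naturality of $\iota$ and the pentagon. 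On morphisms, the single surviving relation of the proposition, $\eta_{f_1}^{-1}(\Gamma_C f_1)\sigma_{f_1} = \eta_{f_2}^{-1}(\Gamma_C f_2)\sigma_{f_2}$, rearranges exactly to the square saying that $\Gamma_C$ intertwines the two balancings, i.e.\ that $\Gamma_C$ is an $\mathscr{A}$-balanced natural transformation; since $\Gamma$ is determined by $\Gamma_C$, this makes $\Phi_Y$ fully faithful, hence an equivalence. Finally, each leg of the chain
\[
\Hom_{\Cat_k}(\Bicolim(X), Y) \;\simeq\; \Diag(X, \Trace(Y)) \;\xrightarrow{\,\Phi_Y\,}\; \bal(\mathscr{M}, \mathscr{N}; Y) \;\simeq\; \Cat_k(\mathscr{M} \otimes_{\mathscr{A}} \mathscr{N}, Y)
\]
is given by whiskering or by unpacking structure, hence is pseudonatural in $Y$, and the bicategorical Yoneda lemma yields $\Bicolim(X) \simeq \mathscr{M} \otimes_{\mathscr{A}} \mathscr{N}$. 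I expect the main obstacle to be the $2$-cell whiskering bookkeeping in the pentagon verification — keeping the $\Delta_{ij}$ and the three presentations of $\iota$ mutually consistent — but once the dictionary $\sigma_{f_2}\sigma_{f_1}^{-1} \leftrightarrow \iota$, $\kappa_2 \leftrightarrow$ associator of the $\mathscr{A}$-action on $\mathscr{M}$, $\kappa_3 \leftrightarrow$ associator of the $\mathscr{A}$-action on $\mathscr{N}$ is in place, it becomes a finite diagram chase.
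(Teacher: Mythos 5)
Your strategy is the same as the paper's: identify $\Diag_{\mathscr{D}}(X,\Trace(Y))$ with $\bal(\mathscr{M},\mathscr{N};Y)$ via the functor $\sigma \mapsto (\sigma_C,\,\sigma_{f_2}\sigma_{f_1}^{-1})$, $\Gamma \mapsto \Gamma_C$, check this is an equivalence pseudonatural in $Y$, and invoke the universal property of the bicolimit together with truncation of the bar construction in a $2$-category. The object-level bookkeeping (the $\Delta_{ij}$ identities, pentagon via $\Delta_{13}=\Delta_{21}$, and the essential-surjectivity data) is correct, and your choice to base the cocone on $G_1$ rather than $G_2$ is an inconsequential cosmetic difference from the paper.

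There is, however, a genuine gap in your full-faithfulness argument. You write that ``the single surviving relation'' on a modification $\Gamma$ is $\eta_{f_1}^{-1}(\Gamma_C f_1)\sigma_{f_1} = \eta_{f_2}^{-1}(\Gamma_C f_2)\sigma_{f_2}$, and you correctly observe that this rearranges to the statement that $\Gamma_C$ intertwines the two balancings. But the proposition describing $\Diag(X,\Trace(Y))$ imposes a \emph{second} family of constraints, namely that $(\Delta^{\eta}_{ij})^{-1}(\Gamma_C F_i G_j)\Delta^{\sigma}_{ij}$ be independent of the pair $(i,j)$. Balancedness of $\Gamma_C$ handles the $i$-dependence (it gives a well-defined $\Gamma_B = \eta_{f_i}^{-1}(\Gamma_C F_i)\sigma_{f_i}$), but the $j$-dependence is a further condition: one must show $\eta_{g_1}^{-1}(\Gamma_B G_1)\sigma_{g_1} = \eta_{g_2}^{-1}(\Gamma_B G_2)\sigma_{g_2} = \eta_{g_3}^{-1}(\Gamma_B G_3)\sigma_{g_3}$. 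This does not follow formally from the balanced-square alone; for the specific $\sigma,\eta$ built in the surjectivity step it requires the interchange/naturality of $\Gamma_C$ against $\kappa_2$, the balanced condition again whiskered with $G_1$, and the strict identity $F_2 G_1 = F_1 G_3$. The paper carries out this verification explicitly (the chain $\operatorname{Eq}(1,1)=\operatorname{Eq}(1,2)=\operatorname{Eq}(1,3)$). Without it your claim that $\Phi_Y$ is full is unsupported, so you should supply this computation before concluding fully faithfulness.
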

\begin{proof}
By the definition of a bicolimit there is an equivalence of categories
\[\Gamma_{\mathscr{C}}: \Cat_c(\Bicolim(X), \mathscr{C}) \to \DiagCat(X, \Trace(\mathscr{C})),\]
so if there is an equivalence of categories 
\[I_{\mathscr{C}}: \DiagCat(X, \Trace(\mathscr{C})) \to \bal(\mathscr{M}, \mathscr{N}; \mathscr{C})\]
for every \(\mathscr{C} \in \Cat_k\) then by \cref{defn:reltensorproduct} \(\Bicolim(X)\) is the relative tensor product \(\mathscr{M} \otimes_{\mathscr{A}} \mathscr{N}\). 

We shall now define \(I_{\mathscr{C}}\) and show it to be a equivalence of categories. Let \(\sigma\) be an object of \(\DiagCat(X, \Trace(\mathscr{C}))\), so
\[
\sigma = \begin{pmatrix}
\sigma_A: A \to Y \\
\sigma_B: B \to Y \\
\sigma_C: C \to Y \\
\sigma_{F_i}: \sigma_B \to \sigma_C \circ F_i\\ 
\sigma_{G_j}: \sigma_A \to \sigma_B \circ G_j\\ 
\end{pmatrix}
\]
where \(A:= \mathscr{M} \times \mathscr{A} \times \mathscr{A} \times \mathscr{N}\), \(B:=\mathscr{M} \times \mathscr{A} \times \mathscr{N}\) and \(C:=\mathscr{M} \times \mathscr{N}\). We define 
\[I_{\mathscr{C}}(\sigma) := \sigma_C  \text{ with balancing } \alpha: \sigma_C F_i \xRightarrow{} \sigma_C F_2: \quad \alpha := \sigma_{F_2} \sigma_{F_1}^{-1}.\]
A morphism of \(\DiagCat(X, \Trace(\mathscr{C}))\) is a natural isomorphism \(\Gamma: \sigma \to \eta\), we define \[I_{\mathscr{C}}(\Gamma):= \Gamma. \]

\textbf{\(I_{\mathscr{C}}\) is a well-defined functor}

This requires one to prove two things: \(\alpha\) is an \(\mathscr{A}\)-balancing and \(\Gamma\) is a natural transformation of \(\mathscr{A}\)-balanced functors. To show \(\alpha\) is a balancing of \(\sigma_C\) we must show that the diagram
\[
\begin{tikzpicture}[commutative diagrams/every diagram]
\node (P0) at (90:2.3cm) {\(((m \lhd a) \lhd b, n) \) };
\node (P1) at (90+72:2cm) {\((m \lhd a, b \rhd n)\)} ;
\node (P2) at (90+2*72:2cm) {\makebox[5ex][r]{\((m, a \rhd(b \rhd n))\)}};
\node (P3) at (90+3*72:2cm) {\makebox[5ex][l]{\((m, (a \ast b) \rhd n)\)}};
\node (P4) at (90+4*72:2cm) {\((m \lhd (a \ast b), n) \)};
\path[commutative diagrams/.cd, every arrow, every label]
(P0) edge node[swap] {\(\alpha_{m \lhd a, b, n} = (\alpha G_1)(m, a, b, n)\)} (P1)
(P1) edge node[swap] {\(\alpha_{m,a,b \rhd n} = (\alpha G_3)(m, a, b, n)\)} (P2)
(P2) edge node {\(\sigma_C (\kappa_3)_{m, a, b, n}\)} (P3)
(P4) edge node {\(\alpha_{m, a \ast b, n} = (\alpha G_2)(m, a, b, n)\)} (P3)
(P0) edge node {\(\sigma_C (\kappa_2)_{m, a, b, n}\)} (P4);
\end{tikzpicture}
\]
commutes for all \((m, a, b, n) \in \mathscr{M} \times \mathscr{A} \times \mathscr{A} \times \mathscr{N}\). This is the case as 
\begin{align*}
&(\sigma_C \kappa_3)(\alpha G_3)(\sigma_C \kappa_1)(\alpha G_1) (\sigma_C \kappa_2^{-1}) \\
&= \Delta_{22}\Delta_{23}^{-1} (\sigma_{F_2} G_3)(\sigma_{F_1}^{-1} G_3) \Delta_{13} \Delta_{21}^{-1} (\sigma_{F_2} G_1)(\sigma_{F_1}^{-1}G_1) \Delta_{11} \Delta_{12}^{-1} \\
&\quad \text{ by definition of } \alpha \text{ and compatibility relations of } \sigma \\
&= (\sigma_{F_2} G_2) \sigma_{G_2} \sigma_{G_3}^{-1} (\sigma_{F_2}^{-1} G_3) (\sigma_{F_2} G_3)(\sigma_{F_1}^{-1} G_3) (\sigma_{F_1} G_3) \sigma_{G_3} \sigma_{G_1}^{-1}\\
&\quad(\sigma_{F_2}^{-1} G_1) (\sigma_{F_2} G_1)(\sigma_{F_1}^{-1}G_1) (\sigma_{F_1} G_1) \sigma_{G_1} \sigma_{G_2}^{-1} (\sigma_{F_1}^{-1} G_2) \\
&\quad \text{ by definition of } \Delta_{ij} \\
&= (\sigma_{F_2} G_2) (\sigma_{F_1}^{-1} G_2) \\
&\quad \text{ cancelling terms} \\
&= (\sigma_{F_2}\sigma_{F_1}^{-1}) G_2 \\
&= \alpha G_2
\end{align*}
Hence, \(\sigma_C\) is \(\mathscr{A}\)-balanced with balancing \(\alpha\). 

Now we shall show that the natural transformation \(\Gamma_C: \sigma_{C} \to \eta_C\) is a natural transformation of \(\mathscr{A}\)-balanced functors. To show this me must show that that following diagram commutes:
\[
\begin{tikzcd}[column sep=3em, row sep=3em]
\sigma_C(m \lhd a, n) 
        \ar[r, "(\alpha_{\sigma})_{m, a, n}"]
        \ar[d, "(\Gamma_C)_{(m \lhd a, n)}"]
    & \sigma_C (m, a \rhd n) 
        \ar[d, "(\Gamma_C)_{(m, a \rhd n)}"] \\
\eta_C(m \lhd a, n) 
        \ar[r, "(\alpha_{\eta})_{m, a, n}"]
    & \eta_C (m, a \rhd n) 
\end{tikzcd}
\]
This is the case as by the definition of \(\Gamma\) we have that
\begin{align*}
    &\eta_{F_1}^{-1} (\Gamma_C F_1) \sigma_{F_1} = \eta_{F_2}^{-1}(\Gamma_C F_2) \sigma_{F_2} \\
\implies &\eta_{F_2} \eta_{F_1}^{-1} (\Gamma_C F_1) = (\Gamma_C F_2) \sigma_{F_2} \sigma^{-1}_{F_1} \\
\implies &\alpha_{\eta} (\Gamma_C F_1) = (\Gamma_C F_2) \alpha_{\sigma}. 
\end{align*}
Thus, \(\Gamma_C: \sigma_{C} \to \eta_C\) is a natural transformation of \(\mathscr{A}\)-balanced functors, and we have concluded the proof that \(I_{\mathscr{C}}\) is well-defined. 

\textbf{\(I_{\mathscr{C}}\) is surjective}

Let \(F : \mathscr{M} \times \mathscr{N} \to \mathscr{C}\) be an \(\mathscr{A}\)-balanced functor with balancing \(\alpha\) i.e.\ \(F\) is an object of \(\bal(\mathscr{M}, \mathscr{N}; \mathscr{C})\). Define
\begin{equation}
\label{eq:Ifull}
    \sigma = 
\begin{pmatrix*}[l]
\sigma_{A} : A \to \mathscr{C} \text{ is } F \\
\sigma_{B} : B \to \mathscr{C} \text{ is } F F_1 \\
\sigma_{C} : C \to \mathscr{C} \text{ is } F F_1 G_2 \\
\sigma_{F_1} : F F_1 \to F F_2 \text{ is the identity} \\
\sigma_{F_2}: F F_1 \to F F_2 \text{ is } \alpha \\
\sigma_{G_1}: F F_1 G_2 \to F F_1 G_1 \text{ is } F \kappa_2^{-1} \\
\sigma_{G_2}: F F_1 G_2 \to F F_1 G_2 \text{ is the identity} \\
\sigma_{G_3}: F F_1 G_2 \to F F_1 G_3 \text{ is } (\alpha^{-1} G_3)(F \kappa_3^{-1}) (\alpha G_2)
\end{pmatrix*}
\end{equation}

If \(\sigma\) is a well-defined element of \(\DiagCat(X, \Trace(\mathscr{C}))\) then \(I_{\mathscr{C}}(\sigma) = F\), so it remains to show that \(\kappa_1 = \Delta_{13} \Delta_{21}^{-1}\), \(\sigma_C \kappa_2 = \Delta_{12} \Delta_{11}^{-1}\) and \(\sigma_C \kappa_3 = \Delta_{22} \Delta_{23}^{-1}\) where \(\Delta_{ij}:= (\sigma_{F_i} G_j) \sigma_{G_j}\):
\begin{align*}
    \Delta_{13} \Delta_{21}^{-1} &= (\sigma_{F_1} G_2) \sigma_{G_2} \sigma_{G_1}^{-1} (\sigma_{F_1}^{-1} G_1) \text{ by definition of } \Delta_{ij} \\
        &= F\kappa_2 \text{ by definition of } \sigma_{F_i}, \sigma_{G_j}. \\
    \Delta_{22} \Delta_{23}^{-1} &= (\sigma_{F_2} G_2) \sigma_{G_2} \sigma_{G_3}^{-1} (\sigma_{F_2}^{-1} G_3) \text{ by definition of } \Delta_{ij} \\
        &= (\alpha G_2) (\alpha^{-1} G_2) (F \kappa_3) (\alpha G_3) (\alpha^{-1} G_3) \text{ by definition of } \sigma_{F_i}, \sigma_{G_j}. \\
        &= F \kappa_3. \\
    \Delta_{13}\Delta_{21}^{-1} &= (\sigma_{F_1} G_3) \sigma_{G_3} \sigma_{G_1}^{-1} (\sigma_{F_2}^{-1} G_1) \text{ by definition of } \Delta_{ij} \\
        &=  (\alpha^{-1} G_3)(F \kappa_3^{-1}) (\alpha G_2) (F \kappa_2) (\alpha^{-1} G_1) \text{ by 
        definition of } \sigma_{F_i}, \sigma_{G_j} \\
        &= F \kappa_1 \text{ i.e.\ identity, by pentagon of } \alpha.
\end{align*}

\textbf{\(I_{\mathscr{C}}\) is full and faithful.}

Suppose \(I_{\mathscr{C}}(\Gamma) = I_{\mathscr{C}}(\Xi)\). By definition of \(I_{\mathscr{C}}\) this is \(\Gamma = \Xi\); hence, \(I_{\mathscr{C}}\) is faithful.

\sloppy Let \(\xi: F \xRightarrow{} G\) be a \(\mathscr{A}\)-balanced natural transformation between the \(\mathscr{A}\)-balanced functors \(F, G: \mathscr{M} \times \mathscr{N} \to \mathscr{A}\), i.e.\ \(\xi\) is a morphism of \(\bal(\mathscr{M}, \mathscr{N}; \mathscr{C})\). We have already shown \(I_{\mathscr{C}}\) to be surjective, so we have \(\sigma\) and \(\eta\) such that \(I_{\mathscr{C}}(\sigma) = F\) and \(I_{\mathscr{C}}(\eta) = G\) where \(\sigma\) is defined in \cref{eq:Ifull} and \(\eta\) is defined analogously. In order to show that \(I_{\mathscr{C}}\) is full we must find a morphism \(\Gamma: \sigma \to \mu\) in 
 \(\DiagCat(X, \Trace(\mathscr{C}))\) such that \(I_{\mathscr{C}}(\Gamma) = \xi\). 
 
 Define
 \[\Gamma = \left( 
 \begin{tikzcd}[column sep=4em]
 \mathscr{M} \times \mathscr{N} 
    \ar[d, bend left, start anchor={[xshift=-0.7em, yshift=-0.5em]east}, end anchor={east}, "\eta_C", ""'{name=t}]
    \ar[d, bend right, start anchor={[xshift=0.7em, yshift=-0.5em]west}, end anchor={west}, "\sigma_C"', ""{name=s}]
    \ar[from=s, to=t, Rightarrow, "{\Gamma_C:= \xi}"]
 \\
\mathscr{C}
 \end{tikzcd}
 \right).\]
 As \(I_{\mathscr{C}}(\Gamma)= \xi\), it remains to show \(\Gamma\) is a well-defined morphism in \(\DiagCat(X, \Trace(\mathscr{C}))\) i.e.\ that 
 \begin{enumerate}
     \item \(\eta_{F_1}^{-1} (\Gamma_C F_1) \sigma_{F_1} = \eta_{F_2}^{-1} (\Gamma_C F_2) \sigma_{F_2} \) and
     \item \(
    (\Delta^{\eta}_{ij})^{-1} (\Gamma_C F_i G_j) \Delta^{\sigma}_{ij} = (\Delta^{\eta}_{kl})^{-1} (\Gamma_C F_k G_l) \Delta^{\sigma}_{kl}\) for all \(i, k =1,2; j,l=1,2,3\). 
 \end{enumerate}
 \begin{enumerate}
     \item As \(\xi\) is an \(\mathscr{A}\)-balanced natural transformation
     \begin{align*}
          &(\eta_{F_2} \eta_{F_1}^{-1})_{m,a,n} \xi_{(m \lhd a, n)} = \xi_{(m, a \rhd n)} (\sigma_{F_2} \sigma_{F_1}^{-1})_{m, a, n} \\
          &\quad\text{ where } \sigma_{F_2} \sigma_{F_1}^{-1} \text{ is the balancing of } I_{\mathscr{C}}(\sigma) \\
          &\quad\text{ and } \eta_{F_2} \eta_{F_1}^{-1} \text{ is the balancing of } I_{\mathscr{C}}(\eta) \\
        \implies &(\eta_{F_1}^{-1})_{m,a,n} \xi_{(m \lhd a, n)}  (\sigma_{F_1})_{m, a, n} = (\eta_{F_2}^{-1})_{m,a,n} \xi_{(m, a \rhd n)} (\sigma_{F_2})_{m, a, n} \\       
        \implies & \eta_{F_1}^{-1} (\Gamma_C F_1)  \sigma_{F_1} (m, a, n) = \eta_{F_2}^{-1} (\Gamma_C F_2) \sigma_{F_2} (m, a, n) \\
        \implies & \eta_{F_1}^{-1} (\Gamma_C F_1)  \sigma_{F_1} = \eta_{F_2}^{-1} (\Gamma_C F_2) \sigma_{F_2}
     \end{align*}
    \item Denote \(\operatorname{Eq}(i,j) := (\Delta^{\eta}_{ij})^{-1} (\Gamma_C F_i G_j) \Delta^{\sigma}_{ij}\). By definition of \(\Delta_{i,j}\), \(\sigma\) and \(\eta\) we have that
    \begin{align*}
        \Delta^{\sigma}_{1j} &= (\sigma_{F_1} G_j) \sigma_{G_j} = \sigma_{G_j} \\
        \Delta^{\eta}_{1j} &= (\eta_{F_i} G_j) \eta_{G_j} = \eta_{G_j} \\
        \Delta^{\sigma}_{2j} &= (\sigma_{F_1} G_j) \sigma_{G_j} =  (\alpha_{\sigma} G_j) \sigma_{G_j} \\
        \Delta^{\eta}_{2j} &= (\eta_{F_1} G_j) \eta_{G_j} =  (\alpha_{\eta} G_j) \eta_{G_j} \\
    \end{align*}
    So 
    \begin{align*}
        \operatorname{Eq}(1,j) &= \eta^{-1}_{G_j} (\xi F_1 G_j) \sigma_{G_j} \text{ and } \\
        \operatorname{Eq}(2,j) &= \eta^{-1}_{G_j} (\alpha^{-1}_{\eta} G_j) (\xi F_2 G_j) (\alpha_{\sigma} G_j) \sigma_{G_j} \\
        &= \eta^{-1}_{G_j} \left( \left( \alpha^{-1}_{\eta} (\xi F_2) \alpha_{\sigma}\right) G_j \right)\sigma_{G_j} \\
        &= \eta^{-1}_{G_j} \left( \xi F_1 G_j \right)\sigma_{G_j} \text{ as } \xi \text{ is } \mathscr{A}\text{-balanced}\\
        &= \operatorname{Eq}(1,j).
    \end{align*}
    This means it remains to show \(\operatorname{Eq}(1,1) = \operatorname{Eq}(1,2) = \operatorname{Eq}(1,3)\):
    \begin{align*}
        \operatorname{Eq}(1,2) &= \eta^{-1}_{G_2} (\xi F_1 G_2) \\
            &= (\xi F_1 G_2)\\
        \operatorname{Eq}(1,1) &= \eta^{-1}_{G_1} (\xi F_1 G_1) \\
            &= (\eta_C \kappa_2) (\xi F_1 G_1) (\sigma_C \kappa_2^{-1}) \\
            &= 
\begin{tikzcd}[column sep=6em, ampersand replacement=\&]
    \vphantom{ty}\mathscr{M} \times \mathscr{A} \times \mathscr{A} \times \mathscr{N}
        \ar[r, bend left=70, looseness=2, "F_1 G_2", ""{name=s1, below},
            start anchor={[yshift=0.7ex]east},
            end anchor={[yshift=0.7ex]west}]
        \ar[r, "F_1 G_1" description, ""{name=t1}, ""'{name=s2}]
        \ar[r, bend right=70, looseness=2, "F_1 G_2"{below}, ""{name=t2},
            start anchor={[yshift=-0.7ex]east},
            end anchor={[yshift=-0.7ex]west}]
        \ar[from=s1, to=t1, Rightarrow, "\,\kappa_2^{-1}",
            start anchor={[yshift=0.2ex]south},
            end anchor=north]
        \ar[from=s2, to=t2, Rightarrow, "\,\kappa_2",
           start anchor={south},
           end anchor=center]
    \& \vphantom{ty}\mathscr{M} \times \mathscr{N}
        \ar[r, bend left=50, "\sigma_C", ""{name=s3, below},
            start anchor={[yshift=0.7ex]east},
            end anchor={[yshift=0.7ex]west}]
        \ar[r, bend right=50, "\eta_C"{below}, ""{name=t3},
            start anchor={[yshift=-0.7ex]east},
            end anchor={[yshift=-0.7ex]west}]
        \ar[from=s3, to=t3, Rightarrow, "\,\xi",
            start anchor={[yshift=0.2ex]south},
            end anchor=center]
    \& \vphantom{ty}\mathscr{C}
\end{tikzcd} \\
            &=
\begin{tikzcd}[column sep=5em, ampersand replacement=\&]
    \vphantom{ty}\mathscr{M} \times \mathscr{A} \times \mathscr{A} \times \mathscr{N}
        \ar[r, "F_1 G_2", ""{name=t1}, ""'{name=s2}]
    \& \vphantom{ty}\mathscr{M} \times \mathscr{N}
        \ar[r, bend left=50, "\sigma_C", ""{name=s3, below},
            start anchor={[yshift=0.7ex]east},
            end anchor={[yshift=0.7ex]west}]
        \ar[r, bend right=50, "\eta_C"{below}, ""{name=t3},
            start anchor={[yshift=-0.7ex]east},
            end anchor={[yshift=-0.7ex]west}]
        \ar[from=s3, to=t3, Rightarrow, "\,\xi",
            start anchor={[yshift=0.2ex]south},
            end anchor=center]
    \& \vphantom{ty}\mathscr{C}
\end{tikzcd} \\
            &= (\xi F_1 G_2) \\
            &= \operatorname{Eq}(1,2) \\
    \operatorname{Eq}(1,3) &= (\alpha_{\eta}^{-1} G_2) (\eta_C \kappa_3)(\alpha_{\eta} G_3)(\xi F_1 G_3) (\alpha_{\sigma}^{-1} G_3)(\sigma_C \kappa_3^{-1})(\alpha_{\sigma} G_2) \\
            &= (\alpha_{\eta}^{-1} G_2) (\eta_C \kappa_3)\big( \left(\alpha_{\eta}(\xi F_1) (\alpha_{\sigma}^{-1} \right) G_3 \big)(\sigma_C \kappa_3^{-1})(\alpha_{\sigma} G_2) \\
            &= (\alpha_{\eta}^{-1} G_2) (\eta_C \kappa_3)(\xi F_2 G_3)(\sigma_C \kappa_3^{-1})(\alpha_{\sigma} G_2) \text{ as } \xi \text{ is } \mathscr{A}\text{-balanced}\\
            &= (\alpha_{\eta}^{-1} G_2) (\xi F_2 G_2) (\alpha_{\sigma} G_2) \text{ by } \left( \dagger \right) \\
            &= \xi F_1 G_2 \text{ as } \xi \text{ is } \mathscr{A}\text{-balanced} \\
            &= \operatorname{Eq}(1,2)
    \end{align*}
where \(\left( \dagger \right)\):
\begin{align*}
    &(\eta_C \kappa_3) (\xi F_2 G_3) (\sigma_C \kappa_3^{-1}) \\
    \quad&= 
\begin{tikzcd}[column sep=6em, ampersand replacement=\&]
    \vphantom{ty}\mathscr{M} \times \mathscr{A} \times \mathscr{A} \times \mathscr{N}
        \ar[r, bend left=70, looseness=2, "F_2 G_2", ""{name=s1, below},
            start anchor={[yshift=0.7ex]east},
            end anchor={[yshift=0.7ex]west}]
        \ar[r, "F_2 G_3" description, ""{name=t1}, ""'{name=s2}]
        \ar[r, bend right=70, looseness=2, "F_2 G_2"{below}, ""{name=t2},
            start anchor={[yshift=-0.7ex]east},
            end anchor={[yshift=-0.7ex]west}]
        \ar[from=s1, to=t1, Rightarrow, "\,\kappa_3^{-1}",
            start anchor={[yshift=0.2ex]south},
            end anchor=north]
        \ar[from=s2, to=t2, Rightarrow, "\,\kappa_3",
           start anchor={south},
           end anchor=center]
    \& \vphantom{ty}\mathscr{M} \times \mathscr{N}
        \ar[r, bend left=50, "\sigma_C", ""{name=s3, below},
            start anchor={[yshift=0.7ex]east},
            end anchor={[yshift=0.7ex]west}]
        \ar[r, bend right=50, "\eta_C"{below}, ""{name=t3},
            start anchor={[yshift=-0.7ex]east},
            end anchor={[yshift=-0.7ex]west}]
        \ar[from=s3, to=t3, Rightarrow, "\,\xi",
            start anchor={[yshift=0.2ex]south},
            end anchor=center]
    \& \vphantom{ty}\mathscr{C}
\end{tikzcd} \\
            \quad&=
\begin{tikzcd}[column sep=5em, ampersand replacement=\&]
    \vphantom{ty}\mathscr{M} \times \mathscr{A} \times \mathscr{A} \times \mathscr{N}
        \ar[r, "F_2 G_2", ""{name=t1}, ""'{name=s2}]
    \& \vphantom{ty}\mathscr{M} \times \mathscr{N}
        \ar[r, bend left=50, "\sigma_C", ""{name=s3, below},
            start anchor={[yshift=0.7ex]east},
            end anchor={[yshift=0.7ex]west}]
        \ar[r, bend right=50, "\eta_C"{below}, ""{name=t3},
            start anchor={[yshift=-0.7ex]east},
            end anchor={[yshift=-0.7ex]west}]
        \ar[from=s3, to=t3, Rightarrow, "\,\xi",
            start anchor={[yshift=0.2ex]south},
            end anchor=center]
    \& \vphantom{ty}\mathscr{C}
\end{tikzcd} \\ 
    \quad&= \xi F_2 G_2
\end{align*}
 \end{enumerate}
\end{proof}
\subsection{Equivalence of Skein Categories and \(k\)--linear Factorisation Homology of Surfaces}
We now shall combine the results proven so far in this paper with the characterisation of factorisation homology in \cref{thm:factcharacterised} to prove the following:
\begin{thm}
\label{thm:skeinklinear}
Let \(\mathscr{V}\) be \(k\)-linear strict ribbon category \(\mathscr{V}\). The functor
\[\Sk_{\mathscr{V}}(\_): \Mfld{2} \to \Cat_k^{\times}\]
is the \(k\)-linear factorisation homology 
\[\int^{\Cat_k^{\times}}_{\_} \mathscr{V}: \Mfld{2} \to \Cat_k^{\times}\]
of surfaces with coefficients in \(\mathscr{V}\).
\end{thm}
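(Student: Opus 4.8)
The plan is to apply the characterisation of factorisation homology recorded in \cref{thm:factcharacterised}, so that the content of the theorem reduces to checking three properties of the assignment $\Sk_{\mathscr{V}}(\_)$. First I would promote $\Sk_{\mathscr{V}}$ to a symmetric monoidal functor $\Sk_{\mathscr{V}}(\_) : \Mfld{2} \to \Cat_k^{\times}$: on objects it is already defined; on an embedding of surfaces it is the induced functor of \cref{rmk:embedding}; and on an isotopy between embeddings it is the natural isomorphism obtained by tracing out ribbon tangles as in \cref{rmk:naturalityofiso}. One then checks that composition of embeddings and concatenation of isotopies are respected with the coherence required of a $2$-functor, that isotopic embeddings induce naturally isomorphic functors, and that $\Sk_{\mathscr{V}}(\emptyset) \simeq \catname{Pt}$ and $\Sk_{\mathscr{V}}(M \sqcup N) \simeq \Sk_{\mathscr{V}}(M) \times \Sk_{\mathscr{V}}(N)$ — a ribbon diagram in $(M \sqcup N) \times [0,1]$ splits as a disjoint union of diagrams, and the local skein relations are compatible with this splitting — so that the functor is symmetric monoidal for $\sqcup$ and $\times$.

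Restricting along $\Disc{2} \hookrightarrow \Mfld{2}$ then yields a framed $E_2$-algebra $F_{\mathscr{V}} := \Sk_{\mathscr{V}}(\_)|_{\Disc{2}}$ with $F_{\mathscr{V}}(\mathbb{R}^2) = \Sk_{\mathscr{V}}(\mathbb{R}^2) \simeq \Sk_{\mathscr{V}}([0,1]^2) \simeq \mathscr{V}$, the last equivalence being \cref{cor:disc} and the middle one coming from isotoping all points and ribbons into the interior. By \cref{defn:facthom}, $\int^{\Cat_k^{\times}}_{\_}\mathscr{V}$ is the left Kan extension of $F_{\mathscr{V}}$ along this inclusion, so to identify it with $\Sk_{\mathscr{V}}(\_)$ it suffices, by \cref{thm:factcharacterised}, to verify for $\Sk_{\mathscr{V}}(\_)$ the three characterising properties: (i) for contractible $U$, $U$ is diffeomorphic to $\mathbb{R}^2$ and $\Sk_{\mathscr{V}}(U) \simeq \mathscr{V}$ as just noted; (ii) for $M \cong C \times [0,1]$, the monoidal structure on $\Sk_{\mathscr{V}}(C \times [0,1])$ constructed in \cref{rmk:canonicalmonoidal} is exactly the framed $E_1$-structure induced by including intervals inside a larger interval, since \cref{rmk:canonicalmonoidal} uses precisely that embedding; and (iii) excision, which is the content of \cref{thm:skeinexcision}.

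The one genuine subtlety is that \cref{thm:skeinexcision} is phrased using Tambara's relative tensor product of $k$-linear categories (\cref{defn:reltensorproductskeincats}), whereas the excision axiom of \cref{thm:factcharacterised}, via \cref{thm:excision}, uses the bar-colimit relative tensor product $\ftensor{\otimes}$ of $\otimes$-presentable categories. This gap is closed by \cref{thm:relaresame}, which supplies a canonical equivalence $\mathscr{M} \ftensor{\otimes}_{\mathscr{A}} \mathscr{N} \simeq \mathscr{M} \otimes_{\mathscr{A}} \mathscr{N}$ in $\Cat_k^{\times}$; alongside this one must also check that the thickened-embedding data used to define the module structures and the gluing $M \sqcup_A N$ in \cref{thm:skeinexcision} describe the same collar gluing and the same induced module structures as in \cref{thm:excision}, so that the two excision statements really concern the same configuration. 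Granting this, $\Sk_{\mathscr{V}}(\_)$ satisfies all three properties, hence $\Sk_{\mathscr{V}}(\_) \simeq \int^{\Cat_k^{\times}}_{\_}\mathscr{V}$ as functors $\Mfld{2} \to \Cat_k^{\times}$, and in particular $\Sk_{\mathscr{V}}(\Sigma) \simeq \int^{\Cat_k^{\times}}_{\Sigma}\mathscr{V}$ for every oriented surface $\Sigma$. The hard part will be step one — assembling the pointwise skein categories into a coherent symmetric monoidal functor out of the $(2,1)$-category $\Mfld{2}$, i.e.\ the careful topological bookkeeping showing that isotopies and their compositions induce natural isomorphisms compatibly; the remaining items are essentially matching up definitions with results already established.
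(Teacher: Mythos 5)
Your proposal matches the paper's proof in structure and in all the essential citations: promoting $\Sk_{\mathscr{V}}$ to a $2$-functor via \cref{rmk:embedding} and \cref{rmk:naturalityofiso}, then checking the three characterising properties of \cref{thm:factcharacterised} using \cref{cor:disc}, \cref{rmk:canonicalmonoidal}, and \cref{thm:skeinexcision} together with \cref{thm:relaresame} to bridge the Tambara and bar-colimit relative tensor products. The only superficial difference is that you obtain the framed $E_2$-algebra by restriction of the $2$-functor along $\Disc{2}\hookrightarrow\Mfld{2}$, whereas the paper invokes it directly from $\mathscr{V}$ being a ribbon category before establishing $2$-functoriality; this is cosmetic and the result is the same.
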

\begin{proof}
Firstly we note that as \(\mathscr{V}\) is a ribbon category it defines a framed \(E_2\)-algebra \(F_{\mathscr{V}}: \Mfld{2} \to \Cat_k^{\times}\) such that \(F_{\mathscr{V}}(\mathbb{D}) = \mathscr{V}\), so the \(k\)-linear factorisation with coefficients in \(\mathscr{V}\) is well defined. 

We saw in \cref{rmk:embedding} that an embedding of surfaces \(\Sigma \xhookrightarrow{} \Pi\) induces a functor \(\Sk(\Sigma) \to \Sk(\Pi)\) between their skein categories, and in \cref{rmk:naturalityofiso} that isotopies of embeddings define natural transformations. This implies that
    \[
    \Sk_{\mathscr{V}}(\_): \Mfld{2} \to \Cat_k
    \]
    is a \(2\)-functor. So it remains to apply \cref{thm:factcharacterised}:
\begin{enumerate}
    \item From \cref{cor:disc} we have an equivalence of categories \(\Sk_{\mathscr{V}}(\mathbb{D}^2) \simeq \mathscr{V}\).
    \item From \cref{rmk:canonicalmonoidal} we have for any \(1\)-manifold \(C\) that \(\Sk(C \times [0,1] )\) has a canonical monoidal structure induced from the inclusions of intervals.
    \item In \cref{thm:skeinexcision} we have proven that given suitable thickened embeddings there is an equivalence of categories
    \[ 
    \Sk_{\mathscr{V}}\left( M \sqcup_{A} N \right) \simeq \Sk_{\mathscr{V}}(M) \otimes_{\Sk_{\mathscr{V}}(A)} \Sk_{\mathscr{V}}(N).
    \]
    where \(\Sk_{\mathscr{V}}(M) \otimes_{\Sk_{\mathscr{V}}(A)} \Sk_{\mathscr{V}}(N)\) is the Tambara relative tensor product of \(k\)-linear categories (see \cref{defn:reltensorproduct}). In \cref{thm:relaresame} we prove that 
    \[ 
    \Sk_{\mathscr{V}}(M) \otimes_{\Sk_{\mathscr{V}}(A)} \Sk_{\mathscr{V}}(N) \simeq \Sk_{\mathscr{V}}(M) \ftensor{\otimes}_{\Sk_{\mathscr{V}}(A)} \Sk_{\mathscr{V}}(N).
    \]
    Hence, 
    \[
    \Sk_{\mathscr{V}}\left( M \sqcup_{A} N \right) \simeq \Sk_{\mathscr{V}}(M) \ftensor{\otimes}_{\Sk_{\mathscr{V}}(A)} \Sk_{\mathscr{V}}(N)
    \]
    and skein categories satisfy the excision of factorisation homologies.
\end{enumerate}
As a factorisation homology is fully characterised by the above (\cref{thm:factcharacterised}), this concludes the proof.
\end{proof}

\section{Quantised Character Varieties}
\label{section:character-varieties}
In this final section we shall give an application to the quantisation of character varieties. 

\begin{defn}
Let \(G\) be a reductive Lie group. The \emph{representation variety} \(\mathfrak{R}_G(\Sigma)\) is the affine variety 
\[
\mathfrak{R}_G(\Sigma) = \left\{\,\rho: \pi_1(\Sigma) \to G \,\right\} 
\]
of homomorphisms from the fundamental group of \(\Sigma\) to \(G\).
\end{defn}

\begin{defn}
The \emph{character variety} \(\Chg_G(\Sigma)\) is the affine categorical quotient \(\mathfrak{R}_G(\Sigma) // G\) of the representation variety of the surface \(\mathfrak{R}_G(\Sigma)\) by the group \(G\) acting upon it by conjugation. The character variety \(\Chg_G(\Sigma)\) has a Poisson structure \cite{AB83, Goldman84} so by a quantisation of \(\Chg_G(\Sigma)\) we mean a deformation of this Poisson structure.
\end{defn}

We shall prove that the skein algebra \(\skalg_{\Repfd_q(G)}(\Sigma)\), of a punctured surface \(\Sigma\) coloured by the category of finite-dimensional representations of the quantum group \(\qgroup{\frg}\) where q is a generic parameter, is a deformation quantisation of the character variety \(\Chg_G(\Sigma)\). 

In order to do this we shall first use the characterisation of skein categories as \(k\)-linear factorisation homology (\cref{thm:skeinklinear}) to prove that the free cocompletion of the skein category \(\Sk_{\mathscr{V}}(\Sigma)\) is the \(\LFP{k}\) factorisation homology \(\int_{\Sigma} \Free(\mathscr{V})\) (\cref{thm:freecompskeincat}).  We shall then use the results of Ben-Zvi, Brochier and Jordan \cite{david1} who showed that one can acquire quantisations of character varieties via \(\LFP{\mathbb{C}}\) factorisation homologies with coefficients in \(\Rep_q(G)\).

\subsection{The Category \texorpdfstring{\(\LFP{k}\)}{LFPk}}
Firstly, we define the category \(\LFP{k}\).

\begin{defn}
A category is \emph{locally finitely presentable} if it is locally small, cocomplete and is generated under filtered colimits by a set of compact objects. 
\end{defn}

\begin{rmk}
A category \(\mathscr{C}\) is \emph{locally finitely presentable} if it is \(\aleph_0\)-locally presentable. 
\end{rmk}

\begin{defn}
A functor \(F: \mathscr{C} \to \mathscr{D}\) is \emph{compact} if it preserves compact objects i.e.\ if \(c\) is a compact object of \(\mathscr{C}\) then \(F(c)\) is a compact object of \(\mathscr{D}\).
\end{defn}

\begin{defn}
Let \(\LFP{k}\) be the \((2,1)-category\) with
\begin{enumerate}
    \item objects: locally finitely presentable \(k\)-linear categories\footnote{Note that in \(\LFP{k}\) the categories are not assumed to be small whereas in \(\Cat_k\) they are.};
    \item \(1\)-morphisms: compact cocontinuous \(k\)-linear functors;
    \item \(2\)-morphisms: natural isomorphisms.
\end{enumerate}
\end{defn}

The \((2,1)\)-category \(\LFP{k}\) is a strict monoidal category with the Kelly--Deligne tensor product \(\boxtimes\) as the monoidal product\footnote{The monoidal unit of \(\LFP{k}^{\boxtimes}\) is \(\kMod\).}.

\begin{defn}
Let \(\mathscr{A}, \mathscr{B}, \mathscr{C} \in \LFP{k}\), \(\Cocont(\mathscr{A} \boxtimes \mathscr{B}, \mathscr{C})\) be the category of cocontinuous functors \(\mathscr{A} \boxtimes \mathscr{B} \to \mathscr{C}\) and \( \Cocont(\mathscr{A}, \mathscr{B}; \mathscr{C})\) be the category of bilinear functors \(\mathscr{A} \times \mathscr{B} \to \mathscr{C}\) which are cocontinuous in each variable separately.
The \emph{Kelly--Deligne tensor product} of \(\mathscr{A}\) and \(\mathscr{B}\) is the category \(\mathscr{A} \boxtimes \mathscr{B} \in \LFP{k}\) together with a bilinear functor \(S: \mathscr{A} \times \mathscr{B} \to \mathscr{A} \boxtimes \mathscr{B} \in \Cocont(\mathscr{A}, \mathscr{B}; \mathscr{C})\) such that composition with \(S\) defines an equivalence of categories 
\[
\Cocont(\mathscr{A} \boxtimes \mathscr{B}, \mathscr{C}) \simeq \Cocont(\mathscr{A}, \mathscr{B}; \mathscr{C}) \cong \Cocont(\mathscr{A}, \Cocont(\mathscr{B}, \mathscr{C}))
\]
for all \(\mathscr{C} \in \LFP{k}\).
\end{defn}

\begin{rmk}
Kelly \cite[Proposition~4.3]{Kelly82} proved the existence of \(\mathscr{A} \boxtimes \mathscr{B}\) for categories \(\mathscr{A}, \mathscr{B} \in \Rex\) where \(\Rex\) is the \((2,1)\)-category of essentially small, finitely cocomplete categories with right exact functors as \(1\)-morphisms and natural isomorphisms as \(2\)-morphisms. Franco \cite[Theorem~18]{Franco2012} showed that for the abelian categories \(\mathscr{A}\)  and \(\mathscr{B}\), their Kelly tensor product \(\mathscr{A} \boxtimes \mathscr{B}\) is the Deligne tensor product of abelian categories \cite{Deligne90} when the Deligne tensor product exists; hence, the name Kelly--Deligne tensor product. For the existence of the Kelly--Deligne tensor product in \(\LFP{k}\) see \cite[Section 2.4.1]{TensorThesis} and the references therewithin. 
\end{rmk}

As the category \(\LFP{k}^{\boxtimes}\) is \(\boxtimes\)-presentable  \cite[Proposition~3.5]{david1}, one can define \(\LFP{k}\) factorisation homology.
\subsection{The \texorpdfstring{\(\LFP{k}\)}{LFPk} Factorisation Homology of Punctured Surfaces}

We shall now very briefly recall the relevant results of Ben-Zvi, Brochier and Jordan \cite{david1} as to how to acquire quantisations of character varieties via \(\LFP{\mathbb{C}}\) factorisation homology. For a fuller account see \cite{david1} or \cite[Section~3.1, Section~3.3]{CookeThesis}.
Throughout this section let \(\mathscr{E} \in \LFP{k}\) be a rigid abelian balanced braided monoidal \(k\)-linear category. 
\begin{ex}
Let \(G\) be a connected Lie group such that its Lie group \(\frg = \Lie(G)\) is semisimple. The category \(\Repfd_q(G)\) of finite-dimensional, integrable \(\qgroup{\frg}\)-modules is a \(\mathbb{C}\)-linear ribbon category, so is a suitable choice of \(\mathscr{E}\). Another choice of \(\mathscr{E}\) is the category of integrable representations \(\Rep_q(G)\) whose objects are possibly infinite direct sums of simple modules in \(\Repfd_q(G)\).
\end{ex}

Firstly note that, the factorisation homology \(\int_{\Sigma}^{\LFP{k}} \mathscr{E}\) of the punctured surface \(\Sigma\) can be given the structure of an \(\mathscr{E}\)-module category as follows:
\begin{figure}[H]
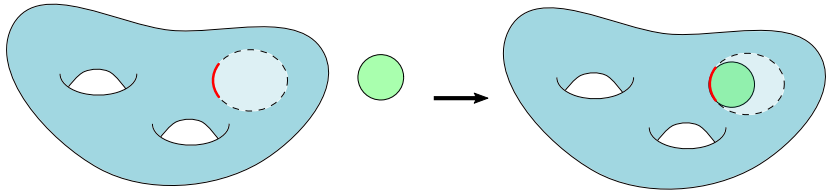

    \centering
    \myincludesvg{width=0.66\textwidth}{}{Module}
    \caption{An illustration of the map \(\Sigma \sqcup \mathbb{D} \to \Sigma\). The surface \(\Sigma_{2,1}\) has a interval marked in red along its boundary along which the disc \(\mathbb{D}\) is attached. The resultant surface is isotopic to \(\Sigma_{2,1}\). }
    \label{fig:2}
\end{figure}
\noindent
Choose an interval along the boundary of \(\Sigma\)\footnote{The module structure depends on the choice of marking.}. The mapping
\(\Sigma \sqcup \mathbb{D} \to \Sigma,\)
which attaches the disc \(\mathbb{D}\) to \(\Sigma\) along the marked interval, induces a \(\int_{\mathbb{D}} \mathscr{E}\)-module structure on \(\int_{\Sigma} \mathscr{E}\). As \(\int_{\mathbb{D}} \mathscr{E} \simeq \mathscr{E}\) in \(\LFP{k}\), this means that \(\int_{\Sigma} \mathscr{E}\) is a \(\mathscr{E}\)-module. This module structure defines the action map 
\[\act_m: \mathscr{E} \to \int^{\LFP{k}}_{\Sigma} \mathscr{E}: a \mapsto a \cdot m\]
for any \(m \in \int^{\LFP{k}}_{\Sigma} \mathscr{E}\), and this action map has a right adjoint 
\[\act_m^R: \int^{\LFP{k}}_{\Sigma} \mathscr{E} \to \mathscr{E}.\]

Not only is \(\int_{\Sigma} \mathscr{E}\) an \(\mathscr{E}\)-module category, but it is the category of modules of an algebra \(A_{\Sigma}\) in \(\mathscr{E}\). This algebra \(A_{\Sigma}\) is an internal Hom:

\begin{defn}[{\cite[147]{EtingofTensorCategories}}\footnote{Note that Etignof et al.\ are assuming that \(\mathscr{M}\) is a \emph{left} \(\mathscr{E}\)-module category, whereas we are assuming that is is a \emph{right} \(\mathscr{E}\)-module category. Also note that they assume the categories are finite, but the proofs work without modification for locally finitely presentable categories.}]
Let \(\mathscr{M}\) be a right \(\mathscr{E}\)-module category and let \(m, n \in \mathscr{M}\). The \emph{internal Hom}\footnote{Also known as the enriched Hom.} from \(m\) to \(n\) is the object \(\IHom(m, n) \in \mathscr{E}\) which represents the functor \(x \mapsto \Hom_{\mathscr{M}}(m \cdot x, n)\) i.e.\ such that there is a natural isomorphism 
\[\eta^{m, n}: \Hom_{\mathscr{M}}(m \cdot \_\;, n) \xrightarrow{\sim} \Hom_{\mathscr{E}}(\_\;, \IHom(m, n)).\]
For any \(m \in \mathscr{M}\), \(\End(m) := \IHom(m, m)\) is an algebra in \(\mathscr{E}\)~\cite[149]{EtingofTensorCategories} called the \emph{internal endomorphism algebra of \(m\)}.
\end{defn}

\begin{defn}
The embedding \(\emptyset \xhookrightarrow{} \Sigma\) induces a pointing map \(\kMod \to \int_{\Sigma} \mathscr{E}\) on the factorisation homology \(\int_{\Sigma}^{\LFP{k}} \mathscr{E}\).
The \emph{distinguished object} \(\mathscr{O}_{\mathscr{E}, \Sigma}\) of the factorisation homology \(\int_{\Sigma}^{\LFP{k}} \mathscr{E}\) is the image of \(k\) under this pointing map \(\kMod \to \int_{\Sigma} \mathscr{E}\).
\end{defn}

\begin{defn}
\label{defn:facthomalg}
The\footnote{The algebra object is dependent on the choice marking of \(\Sigma\)} \emph{algebra object} \(A_{\Sigma}\) of \(\int_{\Sigma}^{\LFP{k}} \mathscr{E}\) is the internal endomorphism algebra of the distinguished object
\[A_{\Sigma} := \End_{\mathscr{E}}(\mathscr{O}_{\mathscr{E}, \Sigma}).\]
This is called the \emph{moduli algebra} of \(\Sigma\) in \cite{david1}.
\end{defn}

\begin{prop}{\cite[Theorem~5.14]{david1}\footnote{In Theorem 5~.11, it states that \(\mathscr{E}\) is a rigid braided monoidal category rather than a balanced monoidal category. Any braided monoidal category constructs a (non-framed) \(E_2\)-algebra which are the coefficients for the framed version of factorisation homology. As we are working with the oriented version of factorisation homology we need a framed \(E_2\)-algebra, and hence \(\mathscr{E}\) must be balanced monoidal.}}
\label{prop:factmodcat}
Let \(\Sigma\) be a punctured surface and \(A_{\Sigma}\) be the algebra object of the factorisation homology \(\int^{\LFP{k}}_{\Sigma} \mathscr{E}\). There is an equivalence of categories
\[
    \int_{\Sigma} \mathscr{E} \simeq \Mod{A_{\Sigma}}{\mathscr{E}},
\]
where \(\Mod{A_{\Sigma}}{\mathscr{E}}\) is the category of right modules over \(A_{\Sigma}\) in \(\mathscr{E}\).
\end{prop}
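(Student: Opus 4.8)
The plan is to deduce this from a reconstruction theorem for module categories over a rigid tensor category, in the locally presentable setting: if a right $\mathscr{E}$-module category $\mathscr{M} \in \LFP{k}$ is \emph{generated} by an object $m$ --- meaning the essential image of the action functor $\act_m : \mathscr{E} \to \mathscr{M}$ contains a set of objects generating $\mathscr{M}$ under colimits --- then $\IHom(m, \_)$ induces an equivalence $\mathscr{M} \simeq \Mod{\End(m)}{\mathscr{E}}$. This is the module-category form of Beck's monadicity theorem, as established in \cite{david1} (compare \cite{EtingofTensorCategories}). Applying it with $\mathscr{M} = \int_\Sigma \mathscr{E}$ and $m = \mathscr{O} := \mathscr{O}_{\mathscr{E},\Sigma}$ yields the statement, since $A_\Sigma = \End_\mathscr{E}(\mathscr{O})$ by \cref{defn:facthomalg}. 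Thus two things need to be supplied: (a) the reconstruction machinery itself, and (b) the fact that $\mathscr{O}$ generates $\int_\Sigma\mathscr{E}$.

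For (a) I would set $F := \act_\mathscr{O}^R = \IHom(\mathscr{O}, \_)$, the right adjoint of the cocontinuous action functor $\act_\mathscr{O} : \mathscr{E} \to \int_\Sigma\mathscr{E}$. The internal composition morphisms equip $F$ with a lift $\widetilde F : \int_\Sigma\mathscr{E} \to \Mod{A_\Sigma}{\mathscr{E}}$ along the forgetful functor, and --- using rigidity of $\mathscr{E}$ together with the projection formula $\IHom(m, m \cdot x) \cong \IHom(m, m) \otimes x$ --- one identifies the monad $F \circ \act_\mathscr{O}$ on $\mathscr{E}$ with the free $A_\Sigma$-module monad, so that algebras for this monad are precisely $A_\Sigma$-modules in $\mathscr{E}$. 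Because $\mathscr{E}$ is rigid, $\act_\mathscr{O}$ has adjoints on both sides inside $\LFP{k}$, hence $F$ is itself cocontinuous and in particular preserves $F$-split coequalisers; Beck's monadicity theorem then reduces ``$\widetilde F$ is an equivalence'' to the single assertion that $F$ is conservative, which is exactly (b).

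Step (b) is the geometric core, and it runs parallel to the essential-surjectivity part of the proof of \cref{thm:skeinexcision}. A punctured surface $\Sigma$ carries a marked boundary interval, and the disc-gluing $\Sigma \sqcup \mathbb{D} \to \Sigma$ along that interval is exactly the map defining $\act_\mathscr{O}$; its essential image consists of configurations that have been pushed into a collar of the marked interval, disjoint from the empty configuration $\mathscr{O}$. By \cref{thm:skeinklinear} together with the free-cocompletion description of $\int^{\LFP{k}}_\Sigma \mathscr{E}$, this category is generated under colimits by the compact objects coming from $\Sk_{\mathscr{E}}(\Sigma)$, that is, by finite sets of coloured points in $\Sigma$; and any such configuration can be isotoped wholesale into the collar of the marked interval by translating its points across one at a time, exactly as in the essential-surjectivity step of \cref{thm:skeinexcision}. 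Hence each of these generating objects is isomorphic to one in the essential image of $\act_\mathscr{O}$, so $\mathscr{O}$ generates $\int_\Sigma\mathscr{E}$. One could instead argue inductively along a collar-gluing decomposition of $\Sigma$ into discs and annuli, using \cref{thm:excision} to present $\int_\Sigma\mathscr{E}$ as an iterated relative tensor product and checking that ``generation by the unit'' is inherited under relative tensor products along boundary intervals.

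I expect the main obstacle to lie not in the topology of (b) --- which is essentially the collar manoeuvre already carried out for excision --- but in making (a) precise: identifying $F \circ \act_\mathscr{O}$ with the free $A_\Sigma$-module monad and verifying the monadicity hypotheses require care with the $\LFP{k}$-level bookkeeping (which adjoints exist, cocontinuity of $F$, compatibility of the projection formula with the braiding and twist on $\mathscr{E}$) --- the same kind of care that \cref{section:relative-tensor-products} devoted to reconciling the Tambara and bar-colimit relative tensor products.
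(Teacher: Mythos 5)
The paper does not prove this proposition at all---it is cited verbatim as Theorem~5.14 of \cite{david1}, so there is no in-paper proof to compare your sketch against. What you have written is a reconstruction of Ben-Zvi--Brochier--Jordan's original argument, and it has the right shape: a Barr--Beck/Ostrik-type monadicity theorem for rigid abelian $\LFP{k}$-module categories, reduced to showing that the distinguished object $\mathscr{O}$ is a projective generator of $\int_\Sigma\mathscr{E}$, with the generation step resting on the fact that on a punctured surface every configuration can be pushed into a collar of the marked boundary interval. Your step~(a) matches what their Sections~4--5 carry out (projection formula from rigidity, identification of the monad $\act^R_\mathscr{O}\circ\act_\mathscr{O}$ with tensoring by $A_\Sigma$, cocontinuity of $\act^R_\mathscr{O}$), so that part is accurate in outline.

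Where you diverge from \cite{david1} is step~(b): you reach for \cref{thm:skeinklinear} and \cref{thm:freecompskeincat} to identify compact generators of $\int^{\LFP{k}}_\Sigma\mathscr{E}$ with skein-category objects, then isotope them into the collar, borrowing the essential-surjectivity manoeuvre from the proof of \cref{thm:skeinexcision}. BZBJ prove generation directly from a gluing-pattern presentation of the punctured surface, with no reference to skein categories (the skein connection is precisely what \emph{this} paper establishes). The two routes rest on the same underlying collar argument and are morally equivalent; but since the present paper merely \emph{cites} the proposition and then \emph{uses} it for \cref{thm:skalgalginv}, invoking the paper's own theorems to re-prove it would create an awkward, though not actually circular, dependence---BZBJ's self-contained route is what the citation is pointing to, and that is what one should reproduce if a proof were to be supplied. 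Two small imprecisions worth flagging: $\Sk_{\mathscr{E}}(\Sigma)$ is not literally defined, since $\mathscr{E}$ is large---you should write $\Sk_{\Comp(\mathscr{E})}(\Sigma)$ or fix $\mathscr{E}=\Free(\mathscr{A})$ explicitly---and for conservativity of $\act^R_\mathscr{O}$ you need the objects $\mathscr{O}\cdot a$ to be \emph{strong} generators (detecting isomorphisms), not merely to generate under colimits; in an abelian $\LFP{k}$ setting these coincide, but the reduction deserves a sentence.
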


\begin{rmk}
Note that as the factorisation homology is equivalent to a category of modules of an algebra, it is an abelian category. 
\end{rmk}
\begin{rmk}
There is a combinatorial description of \(A_{\Sigma}\) in terms of the gluing pattern of the surface (see \cite[Section~5]{david1}). 
\end{rmk}

Now assume \(\mathscr{E} = \Rep_q(G)\), the category of integrable representation of \(\qgroup{\frg}\). As \(A_{\Sigma} \in \Rep_q(G)\) it is a \(\qgroup{\frg}\)-module. Hence, there is an action of the Hopf algebra \(\qgroup{\frg}\) on \(A_{\Sigma}\).
\begin{defn}
\label{defn:invalg}
The \emph{algebra of invariants} \(\mathscr{A}_{\Sigma}\) of the punctured surface \(\Sigma\) with respect to the quantum group \(\qgroup{\frg}\) is \(A_{\Sigma}^{\qgroup{\frg}}\), the algebra of invariants of \(A_{\Sigma}\) under the action of \(\qgroup{\frg}\). In other words, \(\mathscr{A}_{\Sigma}\) is the subgroup of \(x \in A_{\Sigma}\) such that \(g \cdot x = \epsilon(g)x\) for all \(g \in \qgroup{\frg}\) where \(\epsilon\) is the unit of the Hopf algebra \(\qgroup{\frg}\).
\end{defn}
To quantise the character variety \(\Chg_G(\Sigma)\) one deforms the Poisson algebra of functions on \(\Chg_G(\Sigma)\), and a suitable deformation of this Poisson algebra is given by \(\mathscr{A}_{\Sigma}\):
\begin{thm}[{\cite[Theorem~7.3]{david1}}]
\label{thm:invalgquantisation}
Let \(\Sigma\) be a punctured surface. The algebra of invariants \(\mathscr{A}_{\Sigma}\) of \(\int_{\Sigma} \Rep_q(G)\) is a deformation quantisation of the character variety \(\Chg_G(\Sigma)\).
\end{thm}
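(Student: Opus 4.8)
The statement \cref{thm:invalgquantisation} is \cite[Theorem~7.3]{david1}, so the plan is to invoke that reference; for orientation I record the shape of the argument, since it is precisely this quantisation result that, combined with the identification of skein categories with factorisation homology (\cref{thm:skeinklinear}, \cref{thm:freecompskeincat}), yields the quantisation of the relevant skein algebras. Recall that a deformation quantisation of $\Chg_G(\Sigma)$ is a flat family of associative algebras over $\mathbb{C}[[\hbar]]$ --- here, over $\mathbb{C}[q,q^{-1}]$ near $q=1$ --- whose classical fibre is the coordinate ring $\mathcal{O}(\Chg_G(\Sigma))$ and whose induced first-order bracket is the Atiyah--Bott--Goldman Poisson bracket of \cite{AB83, Goldman84}. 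Accordingly I would split the proof into three tasks: identify the classical fibre of $\mathscr{A}_\Sigma$, prove flatness of the family, and match the semiclassical Poisson structure.

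First I would treat the classical fibre. Specialising $q \to 1$, the category $\Rep_q(G)$ degenerates to $\Rep(G)$ with its symmetric braiding; under the combinatorial gluing description of the algebra object $A_\Sigma$ of \cref{defn:facthomalg} (built by fusing the building-block algebras attached to handles and to boundary annuli) this degenerates to $\mathcal{O}(\mathfrak{R}_G(\Sigma))$, with the residual $\qgroup{\frg}$-action becoming the $G$-action by conjugation. Taking invariants then gives $\mathscr{A}_\Sigma|_{q=1} \cong \mathcal{O}(\mathfrak{R}_G(\Sigma))^G = \mathcal{O}(\Chg_G(\Sigma))$, directly from the description of $\Chg_G(\Sigma)$ as the affine GIT quotient $\mathfrak{R}_G(\Sigma)/\!/G$.

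Next, flatness, which is the technical heart. Using \cref{prop:factmodcat} together with the construction of $A_\Sigma$ by iterated gluing, I would reduce to two claims: (a) each building-block algebra is a flat $\mathbb{C}[q,q^{-1}]$-deformation of its $q=1$ specialisation --- for the handle block this is (a form of) the reflection equation algebra $\mathcal{O}_q(G)$, for which flatness is standard via a PBW-type basis; and (b) the gluing operation, which on algebras is a quantum Hamiltonian reduction $(-)^{\qgroup{\frg}}$ along a quantum moment map, preserves flatness and commutes with the specialisation $q \to 1$. Claim (b) rests on $\Rep_q(G)$ being a semisimple deformation of $\Rep(G)$ at the generic parameter $q$, so that the functor of $\qgroup{\frg}$-invariants is exact; assembling the blocks then yields flatness of $\mathscr{A}_\Sigma$ over $\mathbb{C}[q,q^{-1}]$ localised near $q=1$.

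Finally I would compute the first-order term of the commutator on $\mathscr{A}_\Sigma$. Before passing to invariants, this recovers on $\mathcal{O}(\mathfrak{R}_G(\Sigma))$ the quasi-Poisson structure built from the classical $r$-matrix of $\mathfrak{g}$ (in the spirit of Fock--Rosly and of Alekseev--Kosmann-Schwarzbach--Meinrenken); taking $G$-invariants performs the corresponding quasi-Poisson reduction, and the argument of \cite{david1} identifies the resulting Poisson bracket on $\mathcal{O}(\Chg_G(\Sigma))$ with Goldman's. Together the three steps establish that $\mathscr{A}_\Sigma$ is a deformation quantisation of $\Chg_G(\Sigma)$. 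The step I expect to be the main obstacle is (b): a priori the invariants of a flat family need not form a flat family, and the whole argument hinges on controlling the quantum Hamiltonian reduction via the generic semisimplicity of $\Rep_q(G)$.
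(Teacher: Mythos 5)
The paper offers no proof of this theorem at all; it is cited wholesale as \cite[Theorem~7.3]{david1}, exactly as you observe in your opening sentence. Your decision to invoke the reference is therefore the same move the paper makes, and the three-part sketch you supply (classical fibre at $q=1$ via the degeneration of $\Rep_q(G)$ and the building-block algebras to $\mathcal{O}(\mathfrak{R}_G(\Sigma))$, flatness via PBW/quantum Hamiltonian reduction, and identification of the semiclassical bracket with Goldman's via the Fock--Rosly quasi-Poisson picture) is a faithful summary of how Ben-Zvi, Brochier and Jordan actually argue, so it serves well as orientation even though none of it is reproduced in the present paper.
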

\subsection{Cauchy and Free Completions}
We shall now define free cocompletions, Cauchy cocompletions and the subcategory of compact projectives. These are needed to state \cref{thm:freecompskeincat}.
In this subsection let \(\mathscr{V}\) be a closed symmetric monoidal category with all its limits and colimits e.g.\ \(\mathscr{V} = \catname{Set}\) or \(\mathscr{V} = \kMod\).
Let \(\mathscr{C}, \mathscr{D}, \mathscr{E}\) denote small \(\mathscr{V}\)-enriched categories and let \(\mathscr{X}\) denote a not necessarily small \(\mathscr{V}\)-enriched category. We begin by defining free cocompletion:

\begin{defn}
Let \(\mathscr{C}\) be a small \(\mathscr{V}\)-enriched category. The \emph{free cocompletion} \(\Free(\mathscr{C})\) is the enriched functor category \([\mathscr{C}^{op}, \mathscr{V}]\).
\end{defn}

\begin{prop}[{\cite[44]{Kelly82}}]
The free cocompletion \(\Free(\mathscr{C})\) of the small \(\mathscr{V}\)-enriched category \(\mathscr{C}\) is locally finitely presentable i.e.\ \(\Free(\mathscr{C}) \in \LFP{k}\). 
\end{prop}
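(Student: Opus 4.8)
The plan is to verify directly the three conditions in the definition of a locally finitely presentable category for $\Free(\mathscr{C}) = [\mathscr{C}^{\mathrm{op}}, \mathscr{V}]$, taking $\mathscr{V} = \kMod$ (the case $\mathscr{V} = \Set$ is identical, only without the $k$-linearity). Local smallness is immediate: since $\mathscr{C}$ has a set of objects and $\kMod$ is locally small, the enriched functor category $[\mathscr{C}^{\mathrm{op}}, \kMod]$ is locally small. Cocompleteness follows because colimits in a functor category with cocomplete target are computed objectwise, and $\kMod$ is cocomplete; the same objectwise description applies in particular to filtered colimits, which is the fact I would use repeatedly.

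The substance is to produce a set of compact objects generating $[\mathscr{C}^{\mathrm{op}},\kMod]$ under filtered colimits. First I would show that each enriched representable $h_c := \mathscr{C}(-,c)$, for $c$ an object of $\mathscr{C}$, is compact: by the enriched Yoneda lemma $[\mathscr{C}^{\mathrm{op}},\kMod](h_c, F) \cong F(c)$ naturally in $F$, so $\Hom(h_c, -)$ is the evaluation functor $F \mapsto F(c)$, which preserves filtered colimits since those are objectwise. More generally, any object obtained from the $h_c$ by a finite colimit is compact (compact objects are closed under finite colimits in any cocomplete category), and there is only a set of such objects up to isomorphism because $\mathscr{C}$ is small; call this set $\mathcal{S}$. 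Next I would show every presheaf $F$ is a filtered colimit of objects of $\mathcal{S}$: the co-Yoneda (enriched coend) formula expresses $F$ as the colimit $\int^{c} F(c) \otimes_k h_c$, and writing each $k$-module $F(c)$ as a filtered colimit of finitely presented $k$-modules (using that $\kMod$ is itself locally finitely presentable) and using that $\otimes_k$ and the coend commute with colimits, $F$ is exhibited as a filtered colimit of finite colimits of representables, i.e. of objects of $\mathcal{S}$. Hence $[\mathscr{C}^{\mathrm{op}},\kMod]$ is cocomplete, locally small, and generated under filtered colimits by the set $\mathcal{S}$ of compact objects, so $\Free(\mathscr{C}) \in \LFP{k}$.

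I expect the density step — that the $h_c$ (and their finite colimits) genuinely generate under filtered colimits — to be the main point requiring care, since it is where the co-Yoneda formula and the interchange of the various colimits must be assembled, and where one must check that compactness detected by the enriched hom agrees with compactness with respect to conical filtered colimits. For a general closed symmetric monoidal $\mathscr{V}$ in place of $\kMod$ this is exactly Kelly's hypothesis that $\mathscr{V}$ be locally finitely presentable as a closed category (finitely presentable objects closed under $\otimes$, with the unit finitely presentable), and the full argument is carried out in \cite{Kelly82}; for $\mathscr{V} = \kMod$ or $\mathscr{V} = \Set$ these hypotheses are classical, so it suffices to specialise that argument. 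I would therefore present the proof as the specialisation just sketched, citing \cite{Kelly82} for the general enriched statement.
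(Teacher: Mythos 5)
The paper offers no proof of its own here; it cites Kelly and moves on, so the comparison is against the standard argument that citation points to. Your sketch is essentially that argument: local smallness and cocompleteness are as you say (objectwise colimits into a cocomplete $\mathscr{V}$), the representables $h_c$ are compact by Yoneda plus the objectwise computation of filtered colimits, compacts are closed under finite colimits, and $\mathscr{C}$ small gives a small set $\mathcal{S}$ of such objects. So far everything is airtight.

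The one place your sketch is thinner than the argument it is standing in for is exactly the spot you flag: passing from the coend $\int^{c} F(c)\otimes_k h_c$ to a \emph{filtered} colimit of objects of $\mathcal{S}$. Distributing the objectwise filtered colimits $F(c)=\operatorname{colim}_{i\in I_c}F_i(c)$ and the $\otimes_k$ through the coend gives you \emph{some} colimit of representable-built things, but the coend itself ranges over all of $\mathscr{C}$, which may be infinite, and the various index categories $I_c$ are unrelated; the resulting colimit is neither filtered nor indexed by $\mathcal{S}$ without further reorganisation. The honest way to finish is not to push colimits through the coend but to consider the canonical cocone: the comma category $(\mathcal{S}\downarrow F)$ is filtered because $\mathcal{S}$ is closed under finite colimits (and nonempty), and its colimit is $F$ because the representables are dense and compact. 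That is precisely the content Kelly supplies, and it is also where the hypothesis that $\mathscr{V}$ be locally finitely presentable as a closed monoidal category (true for $\kMod$ and $\Set$) is genuinely used. Since you name this as the delicate step and explicitly defer to \cite{Kelly82} for it, the proposal is sound; just be aware that ``interchange of colimits through the coend'' as literally phrased does not close the gap on its own.
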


Note that there is an equivalence
\[
[\mathscr{C}^{op}, \mathscr{V}] \simeq \catname{Prof}_{\mathscr{V}}(\catname{Pt}, \mathscr{C})
\] 
where \(\catname{Prof}_{\mathscr{V}}\) is the \((2,1)\)-category of \(\mathscr{V}\)-enriched profunctors and \(\catname{Pt}\) is the one-point category defined in \cref{sec:catkdefn}. This is needed to define the Cauchy cocompletion as the Cauchy cocompletion is defined as a subcategory of \(\catname{Prof}_{\mathscr{V}}\) rather than of \([\mathscr{C}^{op}, \mathscr{V}]\). 

\begin{defn}
A \(\mathscr{V}\)-enriched category \(\mathscr{C}\) is \emph{Cauchy complete} if it has all absolute colimits.
\end{defn}

\begin{ex}
When \(\mathscr{V} = \kMod\), a category \(\mathscr{C}\) is Cauchy complete if and only if it is idempotent complete and has all direct sums.
\end{ex}

\begin{defn}
Let \(\mathscr{C}\) be a small \(\mathscr{V}\)-enriched category. The \emph{Cauchy completion} \(\cauchy(\mathscr{C})\) of \(\mathscr{C}\) is the full subcategory of \(\catname{Prof}_{\mathscr{V}}(\catname{Pt}, \mathscr{C})\) consisting of profunctors which have a right adjoint in \(\catname{Prof}_{\mathscr{V}}\).
\end{defn}

The Cauchy completion \(\cauchy(\mathscr{C})\) is in fact a Cauchy complete category due to:
\begin{prop}[{\cite[378]{Street83}}]
Every colimit weighted by the \(\mathscr{V}\)-enriched profunctor \(\varphi\) is absolute if and only if \(\varphi\) has a right adjoint in \(\catname{Prof}_{\mathscr{V}}\).
\end{prop}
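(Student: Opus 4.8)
The plan is to prove the two implications separately, working in the calculus of $\mathscr{V}$-profunctors as in Street \cite{Street83}. Recall the dictionary: a weight is exactly a profunctor $\varphi\colon\catname{Pt}\nrightarrow\mathscr{C}$, i.e.\ an object of $\Free(\mathscr{C})=[\mathscr{C}^{op},\mathscr{V}]$; every $\mathscr{V}$-functor $T\colon\mathscr{C}\to\mathscr{X}$ induces an adjoint pair of profunctors $T_{*}\dashv T^{*}$ in $\catname{Prof}_{\mathscr{V}}$; and the $\varphi$-weighted colimit $\varphi\ast T$ exists in $\mathscr{X}$ precisely when the composite $T_{*}\circ\varphi\colon\catname{Pt}\nrightarrow\mathscr{X}$ is representable, the representing object then being $\varphi\ast T$. (The statement concerns only those $\varphi$-weighted colimits that exist; no universal existence is claimed, since that would force $\mathscr{C}$ to be Cauchy complete.) The engine of the argument is the mate correspondence in $\catname{Prof}_{\mathscr{V}}$: for an adjunction $\varphi\dashv\psi$ and a presheaf $\mu\in\Free(\mathscr{C})$ it supplies a natural isomorphism $\int_{c}\mathscr{V}(\varphi c,\mu c)\cong\int^{c}\psi(c)\otimes\mu(c)$, turning the defining end into a coend.

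For the implication that a profunctor with a right adjoint has absolute weighted colimits, suppose $\varphi\dashv\psi$ with unit $\eta\colon\Id_{\catname{Pt}}\to\psi\circ\varphi$ and counit $\varepsilon\colon\varphi\circ\psi\to\Id_{\mathscr{C}}$. Let $T\colon\mathscr{C}\to\mathscr{X}$ have a colimit $\varphi\ast T$ and let $F\colon\mathscr{X}\to\mathscr{Y}$ be arbitrary. Applying the isomorphism above with $\mu=\mathscr{X}(T-,x)$ rewrites the universal property of $\varphi\ast T$ so that the universal cocone $\varphi\to\mathscr{X}(T-,\varphi\ast T)$ and the morphism witnessing its universality are expressed purely in terms of $\eta$, $\varepsilon$ and the structural morphisms (composition, copowers) of $\mathscr{V}$ and $\mathscr{X}$; that is, $\varphi\ast T$ is an absolute colimit in the equational sense. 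A $\mathscr{V}$-functor $F$ carries this whole package of morphisms and equations to the corresponding package for $FT$ in $\mathscr{Y}$, so $F(\varphi\ast T)\simeq\varphi\ast FT$ and the colimit is preserved by every $\mathscr{V}$-functor. The model case is a split idempotent, whose splitting is at once a limit and a colimit exhibited by purely equational data, hence preserved by all functors.

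For the converse, apply absoluteness to the Yoneda embedding $Y\colon\mathscr{C}\to\Free(\mathscr{C})$: the colimit $\varphi\ast Y$ exists, since presheaf categories are cocomplete, and is canonically $\varphi$ itself, so it is absolute. In particular it is preserved by each $\mathscr{V}$-valued representable $\Free(\mathscr{C})(\rho,-)$, giving for every $\rho$ the isomorphism $\Free(\mathscr{C})(\rho,\varphi)\cong\int^{c}\varphi(c)\otimes\Free(\mathscr{C})(\rho,Y c)$. Now put $\psi(c):=\Free(\mathscr{C})(\varphi,\mathscr{C}(-,c))$, a copresheaf on $\mathscr{C}$; this choice is forced, for on a representable weight $\varphi=\mathscr{C}(-,c_{0})$ it returns $\psi=\mathscr{C}(c_{0},-)$. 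Take the counit $\varepsilon\colon\varphi\circ\psi\to\Id_{\mathscr{C}}$ to be the evident evaluation pairing $\varphi(c')\otimes\Free(\mathscr{C})(\varphi,\mathscr{C}(-,c))\to\mathscr{C}(c',c)$, and the unit $\eta\colon\Id_{\catname{Pt}}\to\psi\circ\varphi$ to be the image of $\Id_{\varphi}$ under the isomorphism just obtained with $\rho=\varphi$, which identifies $\psi\circ\varphi=\int^{c}\psi(c)\otimes\varphi(c)$ with $\Free(\mathscr{C})(\varphi,\varphi)$. Checking the two triangle identities — another use of co-Yoneda and of that same isomorphism — then exhibits $\varphi\dashv\psi$ in $\catname{Prof}_{\mathscr{V}}$.

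I expect the first implication to be the crux. The subtlety there is not preservation by cocontinuous functors but preservation by \emph{all} $\mathscr{V}$-functors, which forces one to repackage the universal property of $\varphi\ast T$ in a manifestly equational, functor-stable form — the enriched analogue of the classical split-coequalizer argument — and the genuine work is in getting the enriched bookkeeping of the mates of $\eta$ and $\varepsilon$ right and verifying that the transported cocone on $FT$ really is universal.
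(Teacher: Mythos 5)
The paper gives no proof of this proposition; it is quoted directly from Street with a page reference, so there is no internal argument to compare against. Your two-implication sketch does match the shape of Street's argument, and the converse direction is essentially complete: taking the absoluteness of $\varphi \ast Y \cong \varphi$ against the $\mathscr{V}$-valued representables $\Free(\mathscr{C})(\rho,-)$, reading off $\psi(c) = \Free(\mathscr{C})(\varphi, Yc)$, and producing the unit from $\Id_\varphi$ under the resulting coend isomorphism is exactly the right move, and the sanity check on representable $\varphi$ is a nice touch. The remaining triangle-identity verification is routine co-Yoneda bookkeeping, as you say.

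The gap is in the forward direction, and it is a genuine one rather than a stylistic omission. You assert that the universal cocone $\varphi \to \mathscr{X}(T-,\varphi\ast T)$ together with its universality witness "are expressed purely in terms of $\eta$, $\varepsilon$ and the structural morphisms," and infer absoluteness — but that expression \emph{is} the theorem, and nothing in the paragraph constructs it. What is actually needed: from $\varphi\dashv\psi$ one gets the natural isomorphism $[\mathscr{C}^{op},\mathscr{V}](\varphi,\mathscr{X}(T-,x)) \cong \int^c \psi(c)\otimes\mathscr{X}(Tc,x)$; one must then build, for any $F\colon\mathscr{X}\to\mathscr{Y}$, an explicit comparison $\varphi\ast FT \to F(\varphi\ast T)$ and an explicit inverse to it out of the unit $\eta\colon I \to \int^c\psi(c)\otimes\varphi(c)$, the counit $\varepsilon\colon\varphi(c')\otimes\psi(c)\to\mathscr{C}(c',c)$, the colimiting cocone $\lambda$ for $\varphi\ast T$, and the action of $F$ on hom-objects, and check that the triangle identities for $(\eta,\varepsilon)$ force the two composites to be identities. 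The split-idempotent analogy is illuminating but not a substitute; until the comparison maps and cancellations are written down, the implication is asserted rather than proved. This is exactly the "enriched bookkeeping" you flag as the crux, so you have located the difficulty correctly — you just haven't discharged it.
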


We now define the subcategory of compact projective objects.

\begin{defn}
Let \(\mathscr{X}\) be a \(\mathscr{V}\)-enriched category. An object \(x \in \mathscr{X}\) is \emph{compact projective}\footnote{Compact projective objects are also known as small projective objects or tiny objects.} if the corepresentable functor \(\mathscr{X}(x, \_): \mathscr{X} \to \mathscr{V}\) preserves all small colimits.
\end{defn}

\begin{defn}
Let \(\mathscr{X}\) be a \(\mathscr{V}\)-enriched category. The full subcategory of compact projective objects of \(\mathscr{X}\) is denoted \(\Comp(\mathscr{X})\).
\end{defn}

Finally, \(\Free, \cauchy\) and \(\Comp\) can be related as follows:

\begin{prop}[{\cite[95]{Kelly05}}]
\label{prop:compoffree}
Let \(\mathscr{C}\) be a small \(\mathscr{V}\)-enriched category. Then
\[\Comp \left( \Free (\mathscr{C}) \right) \simeq \cauchy(\mathscr{C}).\]
\end{prop}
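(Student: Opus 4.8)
The plan is to show that, once everything is transported into $\Free(\mathscr{C}) = [\mathscr{C}^{\mathrm{op}}, \mathscr{V}]$, the two sides of the claimed equivalence are literally the \emph{same} full subcategory. Via the equivalence $[\mathscr{C}^{\mathrm{op}},\mathscr{V}] \simeq \catname{Prof}_{\mathscr{V}}(\catname{Pt}, \mathscr{C})$ recorded above, a presheaf $\phi$ on $\mathscr{C}$ is the same thing as a profunctor $\phi \colon \catname{Pt} \nrightarrow \mathscr{C}$; by definition $\cauchy(\mathscr{C})$ is then the full subcategory of $\Free(\mathscr{C})$ on those $\phi$ that admit a right adjoint in $\catname{Prof}_{\mathscr{V}}$, whereas $\Comp(\Free(\mathscr{C}))$ is the full subcategory on those $\phi$ for which the corepresentable $\Free(\mathscr{C})(\phi, -) \colon \Free(\mathscr{C}) \to \mathscr{V}$ preserves all small colimits. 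So it suffices to prove the pointwise statement: a presheaf $\phi$ is compact projective in $\Free(\mathscr{C})$ if and only if $\phi$ has a right adjoint as a profunctor $\catname{Pt} \nrightarrow \mathscr{C}$.

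The technical input I would isolate first is the defining universal property of the free cocompletion: the Yoneda embedding $y \colon \mathscr{C} \to \Free(\mathscr{C})$ is dense, and restriction along $y$ gives an equivalence between cocontinuous $\mathscr{V}$-functors $\Free(\mathscr{C}) \to \mathscr{V}$ and copresheaves $\mathscr{C} \to \mathscr{V}$, the cocontinuous extension of a copresheaf $\psi$ being the coend $F \mapsto \int^{c} F(c) \otimes \psi(c)$. This coend is exactly profunctor composition with $\psi \colon \mathscr{C} \nrightarrow \catname{Pt}$ evaluated at the point, and it visibly preserves all small colimits, since coends and copowers in $\mathscr{V}$ do.

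With this in hand the pointwise statement is a short argument in each direction. If $\phi$ admits a right adjoint $\psi \colon \mathscr{C} \nrightarrow \catname{Pt}$, the profunctor adjunction yields a natural isomorphism $\Free(\mathscr{C})(\phi, F) \cong \int^{c} F(c) \otimes \psi(c)$, whose right-hand side is cocontinuous in $F$ by the previous paragraph, so $\phi$ is compact projective. Conversely, if $\phi$ is compact projective then $\Free(\mathscr{C})(\phi, -)$ is a cocontinuous functor $\Free(\mathscr{C}) \to \mathscr{V}$; by the universal property it is isomorphic to $F \mapsto \int^{c} F(c) \otimes \psi(c)$ for the copresheaf $\psi(c) := \Free(\mathscr{C})(\phi, y(c))$, and one then exhibits the unit $\Id_{\catname{Pt}} \Rightarrow \psi \circ \phi$ as the image of $\Id_\phi$ under this isomorphism and, using the co-Yoneda presentation of $\Id_{\mathscr{C}}$ in $\catname{Prof}_{\mathscr{V}}$, a counit $\phi \circ \psi \Rightarrow \Id_{\mathscr{C}}$, checking the triangle identities to conclude $\phi \dashv \psi$.

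The main obstacle is this last construction: upgrading a bare hom-isomorphism $\Free(\mathscr{C})(\phi, -) \cong \psi \circ (-)$ to honest adjunction data inside the bicategory $\catname{Prof}_{\mathscr{V}}$, which forces one to compute profunctor composites as coends carefully and to check compatibility with the associativity and unit constraints; this is precisely the identification of \emph{absolute} (equivalently, Cauchy) weights with compact projective presheaves due to Kelly and Street, and everything else is bookkeeping with the enriched (co)Yoneda lemma. As consistency checks the argument reproduces the familiar pictures: for $\mathscr{V} = \catname{Set}$ the compact projectives in $[\mathscr{C}^{\mathrm{op}},\mathscr{V}]$ are the retracts of representables, so $\cauchy(\mathscr{C})$ is the Karoubi envelope, and for $\mathscr{V} = \kMod$ they are the retracts of finite direct sums of representables, matching the description of Cauchy completeness over $\kMod$ as idempotent completeness together with finite direct sums.
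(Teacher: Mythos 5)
The paper does not actually prove this proposition; it is cited directly to Kelly's \emph{Basic Concepts of Enriched Category Theory}, so there is no in-paper proof to compare against. That said, your sketch is a correct reconstruction of the standard Kelly--Street characterisation of the Cauchy completion as the full subcategory of small-projective (tiny) objects in the presheaf category. The reduction to the pointwise statement, the forward direction (a right adjoint $\psi$ yields $\Free(\mathscr{C})(\phi, -) \cong \psi \circ (-)$ via uniqueness of the right adjoint to the copower $(-)\otimes\phi$, hence cocontinuity), and the choice $\psi(c) := \Free(\mathscr{C})(\phi, y(c))$ with the coend formula for the converse are all correct and in the right order. The one place you leave open is exactly where the work of the reverse implication lies: the counit is best produced from the composition map
\[
\Free(\mathscr{C})\bigl(\phi, y(c')\bigr) \otimes \Free(\mathscr{C})\bigl(y(c), \phi\bigr) \longrightarrow \Free(\mathscr{C})\bigl(y(c), y(c')\bigr) \cong \mathscr{C}(c, c'),
\]
after using the strong Yoneda lemma to identify $\phi(c) \cong \Free(\mathscr{C})(y(c), \phi)$, and the two triangle identities then reduce to coend manipulations using co-Yoneda and the cocontinuity isomorphism. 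Since you flag this as the main obstacle and everything surrounding it is set up correctly, I regard the sketch as sound; for a full write-up the triangle identities should be spelled out, as that is the only step that is not purely formal bookkeeping.
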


\subsection{Free Cocompletions of Skein Categories are \(\LFP{k}\) Factorisation Homologies}
We now use the characterisation of skein categories as \(k\)-linear factorisation homology to relate skein categories to \(\LFP{k}\) factorisation homology.

\begin{thm}
\label{thm:freecompskeincat}
Let \(\mathscr{A}\) be \(k\)-linear strict ribbon category and let \(\Sigma\) be a surface.
There are equivalences of categories
\begin{align*}
\Free (\Sk_{\mathscr{A}}(\Sigma)) &\simeq \int^{\LFP{k}}_{\Sigma} \Free(\mathscr{A}) \\
\text{ and } \cauchy(\Sk_{\mathscr{A}}(\Sigma)) &\simeq \Comp\left(\int^{\LFP{k}}_{\Sigma} \Free(\mathscr{A}) \right).
\end{align*}
\begin{proof}
We begin with the first equivalence. By \cref{thm:skeinklinear},
\begin{align*}
    \Free(\Sk_{\mathscr{A}}(\Sigma)) &\simeq \Free \left( \int^{\Cat_k}_{\Sigma} \mathscr{A} \right)\\
    &\simeq \int^{\LFP{k}}_{\Sigma} \Free(\mathscr{A})
\end{align*}
as free cocompletion preserves Kan extensions \cite[Section~4.1]{Kelly05}. For the second equivalence apply \(\Comp\):
\begin{align*}
    &\Comp \left( \Free(\Sk_{\mathscr{A}}(\Sigma)) \right) \simeq \Comp \left( \int^{\LFP{k}}_{\Sigma} \Free(\mathscr{A}) \right) \\
    \implies &\cauchy \left( \Sk_{\mathscr{A}}(\Sigma) \right) \simeq \Comp \left( \int^{\LFP{k}}_{\Sigma} \Free(\mathscr{A}) \right)
\end{align*}
by \cref{prop:compoffree}.
\end{proof}
\end{thm}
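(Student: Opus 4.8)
The plan is to deduce both equivalences formally from the characterisation of $\Sk_{\mathscr{A}}(\_)$ as $k$-linear factorisation homology (\cref{thm:skeinklinear}), using that free cocompletion is a colimit-preserving, strong symmetric monoidal $2$-functor $\Free\colon \Cat_k^{\times} \to \LFP{k}$.

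First I would recall from \cref{thm:skeinklinear} that $\Sk_{\mathscr{A}}(\_)$ is the left Kan extension of the framed $E_2$-algebra $F_{\mathscr{A}}\colon \Disc{2}\to\Cat_k^{\times}$ along the inclusion $\Disc{2}\hookrightarrow\Mfld{2}$. I would then invoke Kelly's results that $\Free$ is cocontinuous and preserves Kan extensions \cite[Section~4.1]{Kelly05}, that $\Free$ sends a small $k$-linear category $\mathscr{C}$ to $[\mathscr{C}^{\mathrm{op}},\kMod]\in\LFP{k}$, and that it sends a $k$-linear functor to a compact cocontinuous one (it is a left Kan extension, hence cocontinuous, and carries representables to representables, so preserves compact objects). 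Consequently, post-composing the defining left Kan extension diagram with $\Free$ produces the left Kan extension diagram that computes $\int^{\LFP{k}}_{\_}\Free(\mathscr{A})$: this uses that $\Free$ preserves the shape-of-bar-construction bicolimits underlying excision (\cref{thm:excision} and \cref{defn:barrelativetensorproduct}), which in turn follows from $\Free$ being strong symmetric monoidal $(\Cat_k,\times)\to(\LFP{k},\boxtimes)$ and therefore carrying the framed $E_2$-algebra $F_{\mathscr{A}}$ to $F_{\Free(\mathscr{A})}$. This gives
\[
\Free\bigl(\Sk_{\mathscr{A}}(\Sigma)\bigr)\;\simeq\;\Free\Bigl(\int^{\Cat_k^{\times}}_{\Sigma}\mathscr{A}\Bigr)\;\simeq\;\int^{\LFP{k}}_{\Sigma}\Free(\mathscr{A}),
\]
the first claimed equivalence.

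For the second equivalence I would apply the compact-projective subcategory functor $\Comp(-)$ to the first. On the left, $\Comp\bigl(\Free(\Sk_{\mathscr{A}}(\Sigma))\bigr)\simeq\cauchy(\Sk_{\mathscr{A}}(\Sigma))$ by \cref{prop:compoffree}; on the right we obtain $\Comp\bigl(\int^{\LFP{k}}_{\Sigma}\Free(\mathscr{A})\bigr)$ verbatim. Chaining these equivalences yields $\cauchy(\Sk_{\mathscr{A}}(\Sigma))\simeq\Comp\bigl(\int^{\LFP{k}}_{\Sigma}\Free(\mathscr{A})\bigr)$.

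The main obstacle I expect is the first step: making precise that $\Free$ preserves the left Kan extension defining factorisation homology, i.e. that it preserves the relevant $(2,1)$-categorical (bar-construction) colimits, and that composing $F_{\mathscr{A}}$ with $\Free$ genuinely recovers the canonical framed $E_2$-algebra structure on $\Free(\mathscr{A})$. Both points reduce to the assertion that $\Free$ is a strong symmetric monoidal, colimit-preserving $2$-functor $\Cat_k^{\times}\to\LFP{k}$; the work is in assembling coherences and citing the appropriate results of Kelly, rather than in any new construction.
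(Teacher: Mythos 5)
Your proposal is correct and follows essentially the same route as the paper: identify $\Sk_{\mathscr{A}}(\_)$ with $\int^{\Cat_k^{\times}}_{\_}\mathscr{A}$ via \cref{thm:skeinklinear}, then invoke Kelly's result that $\Free$ preserves left Kan extensions to pass to $\int^{\LFP{k}}_{\_}\Free(\mathscr{A})$, and finally apply $\Comp$ together with \cref{prop:compoffree} for the Cauchy-completion statement. You spell out more explicitly (correctly, and usefully) that this requires $\Free$ to be a cocontinuous strong symmetric monoidal $2$-functor carrying $F_{\mathscr{A}}$ to $F_{\Free(\mathscr{A})}$, which the paper leaves implicit in its citation of Kelly.
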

\subsection{Skein Algebras are Quantisations of Character Varieties}
We conclude this paper by using \cref{thm:freecompskeincat} to prove that skein algebras of punctured surfaces with generic parameters correspond to the algebras of invariants of \(\LFP{k}\) factorisation homology. Hence, these skein algebras are deformation quantisation of character varieties. 

\begin{defn}
Let \(\mathscr{A}\) be a strict \(k\)-linear ribbon category and let \(\Sigma\) be an oriented surface. The \emph{skein algebra} 
\[
\skalg_{\mathscr{V}}(\Sigma) :=   \operatorname{End}_{\Sk_{\mathscr{V}}(\Sigma)}(\emptyset) = \operatorname{End}_{\cauchy(\Sk_{\mathscr{V}}(\Sigma))}(\emptyset)\footnote{As \(\Sk(\Sigma)\) is a full subcategory of \(\Cauchy(\Sk(\Sigma))\).}
\]
where the distinguished object of \(\Sk_{\mathscr{V}}(\Sigma)\) is the empty set.
\end{defn}

\begin{thm}
\label{thm:skalgalginv}
Let \(\Sigma\) be a punctured surface and \(q \in \mathbb{C}^{\times}\) be \(1\) or not a root of unity. There is an algebra isomorphism 
\[
\skalg_{\Repfd_q(G)}(\Sigma) \cong \mathscr{A}_{\Sigma}
\]
between the skein algebra and the algebra of invariants \(\mathscr{A}_{\Sigma}\) of  \(\int^{\LFP{\mathbb{C}}}_{\Sigma} \Rep_q(G)\). 
\begin{proof}
We begin with the right hand side. For brevity we shall denote \(Z_q(\Sigma) := \int^{\LFP{\mathbb{C}}}_{\Sigma} \Rep_q(G)\).
By definition \(\mathscr{A}_{\Sigma} := A_{\Sigma}^{\qgroup{\frg}}\), the algebra of invariants of the algebra object \(A_{\Sigma} := \End(\mathscr{O})\) where \(\mathscr{O}\) is the distinguished object of \(Z_q(\Sigma)\). Hence,
\[
\mathscr{A}_{\Sigma} = A_{\Sigma}^{\qgroup{\frg}} = \Hom_{\Rep_q(G)}(\mathscr{O} , A_S) = \Hom_{\Rep_q(G)}(\mathscr{O} , \End(\mathscr{O})).
\]
As \(\End(m) = \act_m^R(m)\) \cite[Definition~3.9]{david1}\footnote{Ben-Zvi, Brochier and Jordan define \(\End(m) := \act_m^R(m)\), but this definition is equivalent to ours as \(\End(m)\) is unique up to isomorphism and \(\act_m^R(m)\) satisfies the required universal property.},
\[
\mathscr{A}_{\Sigma} = \Hom_{\Rep_q(G)}(\mathscr{O} , \End(\mathscr{O})) = \Hom_{\Rep_q(G)}(\mathscr{O} , \act_{\mathscr{O}}^R(\mathscr{O})) 
\]
By the definition of the right adjoint
\[\mathscr{A}_{\Sigma} = \Hom_{\Rep_q(G)}(\mathscr{O} , \act_{\mathscr{O}}^R(\mathscr{O})) = \Hom_{Z_q(\Sigma)}(\act_{\mathscr{O}}(\mathscr{O}), \mathscr{O}) = \operatorname{End}_{Z_q(\Sigma)}(\mathscr{O}).\]
As \(q\) is generic, the distinguished object \(\mathscr{O}\) of \(Z_q(\Sigma)\) is compact projective and \(\Comp(Z_q(\Sigma))\) is a full subcategory; hence,
\[\mathscr{A}_{\Sigma} = \operatorname{End}_{Z_q(\Sigma)}(\mathscr{O}) =  \operatorname{End}_{\Comp\left(Z_q(\Sigma) \right)}(\mathscr{O}).\]
By \cref{thm:freecompskeincat}, 
\[\Comp\left(Z_q(\Sigma) \right) \simeq \cauchy\left(\Sk_{\Repfd_q(G)}(\Sigma)\right).\]
So, we conclude that
\[\mathscr{A}_{\Sigma} = \operatorname{End}_{\Comp\left(Z_q(\Sigma) \right)}(\mathscr{O}) \simeq  \operatorname{End}_{\cauchy(\Sk(\Sigma))}(\emptyset) = \skalg(\Sigma).\]
\end{proof}
\end{thm}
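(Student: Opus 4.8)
The plan is to start from the right-hand side, $\mathscr{A}_\Sigma := A_\Sigma^{\qgroup{\frg}}$, and strip away the definitions one layer at a time until I reach an honest endomorphism algebra inside $\int_\Sigma \Rep_q(G)$, and then transport that algebra across \cref{thm:freecompskeincat}. Throughout write $Z_q(\Sigma) := \int^{\LFP{\mathbb{C}}}_\Sigma \Rep_q(G)$, let $\mathscr{O}$ be its distinguished object, and let $\act_{\mathscr{O}} \colon \Rep_q(G) \to Z_q(\Sigma)$ be the action functor with right adjoint $\act_{\mathscr{O}}^R$. Recall from \cref{defn:facthomalg} that $A_\Sigma := \End_{\Rep_q(G)}(\mathscr{O})$ is the internal endomorphism algebra of $\mathscr{O}$.

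First I would rewrite the $\qgroup{\frg}$-invariants as a hom out of the unit: for any $V \in \Rep_q(G)$ one has $V^{\qgroup{\frg}} \cong \Hom_{\Rep_q(G)}(\mathbf{1}, V)$, where $\mathbf{1}$ is the monoidal unit, so $\mathscr{A}_\Sigma \cong \Hom_{\Rep_q(G)}\bigl(\mathbf{1}, \End_{\Rep_q(G)}(\mathscr{O})\bigr)$. Next, using the identity $\End(m) = \act_m^R(m)$ from \cite{david1} together with the adjunction $\act_{\mathscr{O}} \dashv \act_{\mathscr{O}}^R$, this becomes $\Hom_{Z_q(\Sigma)}\bigl(\act_{\mathscr{O}}(\mathbf{1}), \mathscr{O}\bigr)$; since the unit acts trivially, $\act_{\mathscr{O}}(\mathbf{1}) \cong \mathscr{O}$, whence $\mathscr{A}_\Sigma \cong \operatorname{End}_{Z_q(\Sigma)}(\mathscr{O})$. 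Everything so far is formal and holds for arbitrary $q$.

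The only non-formal input enters here: when $q = 1$ or $q$ is not a root of unity, $\Rep_q(G)$ is semisimple, and by the structure results of \cite{david1} the distinguished object $\mathscr{O} \in Z_q(\Sigma)$ is compact projective; since $\Comp(Z_q(\Sigma))$ is a full subcategory this gives $\operatorname{End}_{Z_q(\Sigma)}(\mathscr{O}) = \operatorname{End}_{\Comp(Z_q(\Sigma))}(\mathscr{O})$. Now identify $\Rep_q(G) \simeq \Free(\Repfd_q(G))$ in $\LFP{\mathbb{C}}$ (every object of $\Rep_q(G)$ is a direct sum of simples of the small category $\Repfd_q(G)$), so that $Z_q(\Sigma) \simeq \int^{\LFP{\mathbb{C}}}_\Sigma \Free(\Repfd_q(G))$, and apply \cref{thm:freecompskeincat}: this yields $\Comp(Z_q(\Sigma)) \simeq \cauchy\bigl(\Sk_{\Repfd_q(G)}(\Sigma)\bigr)$, and under this equivalence $\mathscr{O}$ — the image of $\emptyset \hookrightarrow \Sigma$ — corresponds to the empty set $\emptyset \in \Sk_{\Repfd_q(G)}(\Sigma)$. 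Composing all the identifications, $\mathscr{A}_\Sigma \cong \operatorname{End}_{\cauchy(\Sk_{\Repfd_q(G)}(\Sigma))}(\emptyset) = \skalg_{\Repfd_q(G)}(\Sigma)$, which is the claim.

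The step I expect to cost the most care is verifying that every arrow above is an isomorphism of \emph{algebras}, not merely of underlying $\mathbb{C}$-vector spaces: one must match the multiplication on $A_\Sigma$ coming from the internal-Hom structure with composition in $\operatorname{End}_{Z_q(\Sigma)}(\mathscr{O})$ (this is essentially the universal property of the internal endomorphism algebra together with the multiplicativity of $\act_{\mathscr{O}}^R$), and then track composition through the equivalence of \cref{thm:freecompskeincat} down to the stacking product on $\skalg_{\Repfd_q(G)}(\Sigma)$. By contrast the genericity hypothesis is used only to know $\mathscr{O}$ is compact projective, and the remaining ingredients — the identity $\End(m) = \act_m^R(m)$ and the description of $\Rep_q(G)$ as a free cocompletion of $\Repfd_q(G)$ — are inputs cited from \cite{david1} and from \cref{thm:freecompskeincat} respectively.
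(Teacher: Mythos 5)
Your proposal follows essentially the same line of argument as the paper: unwind the invariants via $\Hom$ out of the unit, use the identity $\End(m)=\act_m^R(m)$ together with the adjunction to land on $\operatorname{End}_{Z_q(\Sigma)}(\mathscr{O})$, invoke compact projectivity of $\mathscr{O}$ at generic $q$, and transport along $\Comp(Z_q(\Sigma))\simeq\cauchy(\Sk_{\Repfd_q(G)}(\Sigma))$ from \cref{thm:freecompskeincat}. You are, if anything, slightly more careful than the paper in writing $\mathbf{1}$ rather than overloading $\mathscr{O}$ for the monoidal unit, in making the identification $\Rep_q(G)\simeq\Free(\Repfd_q(G))$ explicit, and in noting that the equivalence carries $\mathscr{O}$ to $\emptyset$, but the approach is the same.
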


\begin{cor}
\label{cor:skeincharacter}
Let \(G\) be any connected Lie group such that \(\Lie(G) = \frg\) is semisimple, \(q \in \mathbb{C}^{\times}\) be \(1\) or not a root of unity, and \(\Sigma\) be a punctured surface. Then \(\skalg_{\Repfd_q(G)}(\Sigma)\) is a deformation quantisation of the character variety \(\Chg_G(\Sigma)\).
\begin{proof}
By \cref{thm:skalgalginv}, \(\skalg_{\Repfd_q(G)}(\Sigma) \simeq \mathscr{A}_{\Sigma}\) and, by \cref{thm:invalgquantisation}, \(\mathscr{A}_{\Sigma}\) is a deformation quantisation of the character variety \(\Chg_G(\Sigma)\).
\end{proof}
\end{cor}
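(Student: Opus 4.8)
The plan is to obtain the corollary by composing two results already in hand: the algebra isomorphism of \cref{thm:skalgalginv} and the quantisation theorem of Ben-Zvi, Brochier and Jordan recalled in \cref{thm:invalgquantisation}.

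First I would invoke \cref{thm:skalgalginv}: under the hypotheses that $\frg = \Lie(G)$ is semisimple and that $q$ is $1$ or not a root of unity, there is an algebra isomorphism $\skalg_{\Repfd_q(G)}(\Sigma) \cong \mathscr{A}_{\Sigma}$, where $\mathscr{A}_{\Sigma}$ is the algebra of $\qgroup{\frg}$-invariants of the moduli algebra of $\int_{\Sigma}^{\LFP{\mathbb{C}}} \Rep_q(G)$. It matters here that this is an isomorphism of \emph{algebras} and not merely of underlying vector spaces, since a deformation quantisation is a statement about an algebra structure deforming a Poisson algebra; \cref{thm:skalgalginv} supplies exactly this.

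Second, I would apply \cref{thm:invalgquantisation}, which asserts that $\mathscr{A}_{\Sigma}$ is a deformation quantisation of the character variety $\Chg_G(\Sigma)$. Transporting this structure along the isomorphism of the previous step shows that $\skalg_{\Repfd_q(G)}(\Sigma)$ is itself a deformation quantisation of $\Chg_G(\Sigma)$, which is the claim.

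No genuine obstacle remains at the level of the corollary itself; the substance has been front-loaded into the earlier results --- excision of skein categories (\cref{thm:skeinexcision}), their identification with $k$-linear factorisation homology (\cref{thm:skeinklinear}), the free-cocompletion statement (\cref{thm:freecompskeincat}) that lets one pass from the skein category to the locally finitely presentable setting where the moduli algebra lives, and the fact, used in \cref{thm:skalgalginv}, that the distinguished object of $\int_{\Sigma}^{\LFP{\mathbb{C}}} \Rep_q(G)$ is compact projective for generic $q$. The one thing worth checking is bookkeeping: that the hypothesis ``$q=1$ or not a root of unity'' is precisely the condition under which both \cref{thm:skalgalginv} and \cref{thm:invalgquantisation} hold, so that the two may be composed without a gap.
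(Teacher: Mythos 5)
Your proposal is correct and is essentially identical to the paper's proof: both apply \cref{thm:skalgalginv} to obtain the algebra isomorphism $\skalg_{\Repfd_q(G)}(\Sigma) \cong \mathscr{A}_{\Sigma}$ and then invoke \cref{thm:invalgquantisation} to conclude. The additional remarks you make (that the isomorphism is one of algebras, and that the real work lies in the earlier theorems) are accurate observations but do not change the argument.
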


\printbibliography[heading=bibintoc,title={References}]
\end{document}